\address{Department of Mathematics, Tokyo Institute of Technology, 2-12-1 Ookayama, Meguro-ku, Tokyo, 152-8551, Japan}
\email{isoshima.t.aa@m.titech.ac.jp}
\theoremstyle{plain}
\newtheorem{thm}{Theorem}[section]
\newtheorem{prop}[thm]{Proposition}
\newtheorem{lem}[thm]{Lemma}
\newtheorem*{thm*}{Theorem}
\newtheorem*{cor*}{Corollary}
\theoremstyle{definition}
\newtheorem{dfn}[thm]{Definition}
\newtheorem{rem}[thm]{Remark}
\newtheorem{exm}[thm]{Example}
\newtheorem{que}[thm]{Question}
\newtheorem{con}[thm]{Conjecture}
\newtheorem*{que*}{Question}
\newtheorem*{con*}{Conjecture}
\begin{document}

\title{Trisections of the doubles of some Mazur type 4-manifolds}
\author{Tsukasa Isoshima}
\date{}

\begin{abstract}
We show that certain two kinds of trisection diagrams of the doubles of the Mazur type 4-manifolds introduced by Akbulut and Kirby are standard. One is constructed by doubling a certain relative trisection diagram of the Mazur type. The other is constructed by using an algorithm taking Kirby diagrams to trisection diagrams.
\end{abstract}

\maketitle

\section{Introduction}\label{sec:intro}
A trisection, introduced by Gay and Kirby \cite{MR3590351}, is roughly speaking a decomposition of a 4-manifold into three 4-dimensional 1-handlebodies. Any 4-manifold admits a trisection, and two trisections of the same 4-manifold are isotopic after some stabilizations. In the present, the following is conjectured as a 4-dimensional analogue of Waldhausen's theorem in the theory of Heegaard splittings which states that each Heegaard splitting of $S^3$ is isotopic to the stabilization of the genus 0 Heegaard splitting.

\begin{con}[{\cite[Conjecture 3.11]{MR3544545}}]
Each trisection of $S^4$ is isotopic to the stabilization of the genus 0 trisection.
\end{con}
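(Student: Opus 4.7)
The plan is to recast the conjecture in terms of trisection diagrams and attack it by induction on the genus. A genus-$g$ trisection of $S^4$ corresponds to a triple of cut systems $(\alpha,\beta,\gamma)$ on a closed orientable surface $\Sigma_g$ such that each pair $(\alpha,\beta)$, $(\beta,\gamma)$, $(\gamma,\alpha)$ forms a standard Heegaard diagram of $\#^{k_i}(S^1\times S^2)$. The $g$-fold stabilization of the genus $0$ trisection has a canonical ``standard'' diagram on $\Sigma_g$, so the conjecture becomes the combinatorial assertion that every such triple can be reduced to the standard form via handle slides within each cut system, orientation-preserving diffeomorphisms of $\Sigma_g$, and destabilization moves.

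I would proceed by induction on $g$. The base cases $g=1,2$ are theorems of Meier--Zupan, obtained by a case analysis of cut systems on low-genus surfaces combined with the classification of Heegaard splittings of $\#^k(S^1\times S^2)$ due to Waldhausen and Laudenbach--Po\'enaru. For the inductive step, given a diagram of genus $g\geq 3$, I would attempt to locate a destabilizing configuration: a pair of curves, one in each of two cut systems, bounding compressing disks on opposite sides of $\Sigma_g$ together with a compatible disk on the third side, so that cutting along them exhibits the trisection as a stabilization of a genus-$(g-1)$ trisection of $S^4$. A natural tool is Hempel-type distance in the curve complex of $\Sigma_g$ applied simultaneously to the three pairwise Heegaard splittings, leveraging rigidity coming from the fact that each pair is a standard, highly reducible Heegaard splitting.

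In parallel I would try a Kirby-calculus approach based on the algorithm referenced in the abstract: turn the trisection into a Kirby diagram of $S^4$, simplify it by handle cancellations, and try to pull each Kirby move back to a trisection move. The delicate point is that intermediate diagrams produced by arbitrary handle cancellations need not arise from trisections, so one must either stay inside the ``trisection-induced'' class of Kirby diagrams or enlarge the allowed move set while maintaining control on the associated trisection genus.

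The hard part, and the reason the conjecture remains open, is that it appears deeply entangled with the smooth $4$-dimensional Poincar\'e conjecture: an exotic $S^4$ would itself admit trisections whose standardness is a priori unclear, so in principle a non-standard trisection of a homotopy $4$-sphere could serve as a witness to exoticness. No purely combinatorial manipulation of $(\alpha,\beta,\gamma)$ on $\Sigma_g$ can therefore suffice; any proof must at some point invoke the smooth structure of the ambient $S^4$ in a substantive way, for instance through $4$-dimensional handle cancellations beyond what the moves on trisection diagrams currently encode. This conceptual, rather than computational, obstacle is where I expect any naive inductive attack to fail.
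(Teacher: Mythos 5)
This statement is a \emph{conjecture} (the $4$-dimensional Waldhausen conjecture, Conjecture 3.11 of Meier--Zupan), and the paper contains no proof of it; the paper only verifies the much weaker diagrammatic statement for two specific families of trisection diagrams of $DW^{-}(0,n+2)$. Your proposal is likewise not a proof: it is a survey of strategies, and you yourself concede that the conjecture remains open and that the inductive and Kirby-calculus attacks you outline are expected to fail. So there is no completed argument to compare against the paper's; to your credit, you correctly identify the statement as open rather than manufacturing a bogus proof.

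One substantive point deserves correction. Your opening reformulation --- that the conjecture ``becomes the combinatorial assertion that every such triple can be reduced to the standard form via handle slides, surface diffeomorphisms, and destabilizations'' --- is not equivalent to the conjecture as stated; it is strictly weaker. Diagrammatic standardness only shows the trisection is \emph{diffeomorphic} to a stabilization of the genus $0$ trisection, whereas the conjecture asserts it is \emph{isotopic} to one. The paper is careful about this asymmetry: since isotopic trisections are diffeomorphic, a \emph{non-standard} diagram would yield a counterexample, which is why the author studies diagrammatic standardness as a test of the conjecture rather than as a route to proving it. A second, smaller slip: the conjecture concerns the standard smooth $S^4$, so your closing claim that it is ``deeply entangled with the smooth Poincar\'e conjecture'' via exotic homotopy $4$-spheres conflates two different (though related) questions; a homotopy $4$-sphere not diffeomorphic to $S^4$ is simply outside the scope of this statement. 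If you want to contribute to this circle of ideas in the spirit of the paper, the tractable task is the one actually carried out there: take an explicit diagram of a manifold known to be diffeomorphic to $S^4$ (e.g.\ the double of a Mazur-type manifold) and reduce it by explicit handle slides, Dehn twists, and destabilizations.
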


Trisection diagrams are new diagrams describing 4-manifolds. A trisection diagram consists of an orientable closed surface and certain three kinds of cut systems on the surface called $\alpha$ (red), $\beta$ (blue) and $\gamma$ (green) curves (see Figure \ref{fig:modeldiagramofrelativetrisection} for example). Any 4-manifold can be described by a trisection diagram. Trisection diagrams can be defined independently of trisections although they have the word, trisection, in the name, but there is a one to one correspondence between trisections and trisection diagrams under an appropriate equivalence relation. For example, two trisections are diffeomorphic if and only if two corresponding trisection diagrams are related by only surface diffeomorphisms and handle slides among the same family curves. 

In this paper, as a special case of the above conjecture, we consider the following question. Note that we call a trisection diagram of a 4-manifold diffeomorphic to $S^4$ \textit{standard} if it is related to the stabilization of the genus 0 trisection diagram without stabilizations, or equivalently, it is related to the genus 0 trisection diagram by only surface diffeomorphisms, handle slides and destabilizations.

\begin{que}
Is a trisection diagram of a 4-manifold $X$ diffeomorphic to $S^4$ standard?
\end{que}

This question is based on the following result by Gay and Kirby \cite[Corollary 12]{MR3590351}: Two closed 4-manifolds are diffeomorphic if and only if corresponding two trisection diagrams are related by surface diffeomorphisms, handle slides among the same family curves and balanced (de)stabilizations. Note that it follows from the existence and uniqueness of trisections.

In \cite{https://doi.org/10.48550/arxiv.2205.04817}, the author considered the trivial Price twisted 4-manifold as $X$. The author and Ogawa \cite{isoshima2023trisections,isoshima2023infinitely} also considered the non-trivial Gluck twisted 4-manifold as $X$. In this paper, we consider as $X$ the double of the Mazur type $W^{\pm}(\ell,k)$ introduced by Akbulut and Kirby \cite{MR0544597} for any integers $\ell$ and $k$. We call a contractible 4-manifold with boundary of Mazur type if it admits a handle decomposition consisting of only single 0-, 1-, and 2-handle. It is known \cite{MR0125574} that the double of each Mazur type is diffeomorphic to $S^4$. Thus, we can take such a 4-manifold as $X$.

In this paper, we construct two kinds of trisection diagrams of the double $DW^{-}(0,n+2)$ of the Mazur type $W^{-}(0,n+2)$ in a different way. Firstly, we construct a trisection diagram of $DW^{-}(0,n+2)$ from a relative trisection diagram constructed by Takahashi \cite{takahashi2023exotic} (see Figure \ref{fig:Takahashi}) by using a method of \cite[Corollary 2.13]{MR3999550} which obtains trisection diagrams of doubles by doubling relative trisection diagrams. Figure \ref{fig:DDn} describes the trisection diagram. Our first result is as follows:

\begin{thm*}[Theorem \ref{thm1}]
The trisection diagram depicted in Figure \ref{fig:DDn} is standard for any integer $n$.
\end{thm*}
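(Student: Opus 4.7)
The plan is to produce an explicit sequence of moves---handle slides within each of the $\alpha$, $\beta$, $\gamma$ families, surface diffeomorphisms, and destabilizations---reducing the diagram in Figure~\ref{fig:DDn} to the genus~$0$ trisection diagram of $S^4$. Since $DW^{-}(0,n+2)$ is diffeomorphic to $S^4$ by the classical result of \cite{MR0125574}, such a reduction must exist in principle; the task is to write down one that works uniformly in the parameter $n$.

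As a first step I would carefully read off the genus of Figure~\ref{fig:DDn} and record each of the three cut systems. Because the Mazur-type decomposition of $W^{-}(0,n+2)$ consists of a single $0$-handle, a single $1$-handle, and a single $2$-handle, Takahashi's relative trisection diagram is of small genus, and the doubled diagram obtained via \cite[Corollary 2.13]{MR3999550} inherits this low complexity, with $\beta$ and $\gamma$ curves differing from the $\alpha$ curves by twists whose count is governed by $n$. The doubling construction also produces a natural involution interchanging the $\beta$-side and the $\gamma$-side, which I expect to exploit.

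Next I would search for a destabilizing configuration, namely a genus-one summand of the surface that contains an $\alpha$ curve meeting a $\beta$ (or $\gamma$) curve in a single geometric point, together with a parallel copy of the remaining family. Such a summand can be split off, lowering the genus by one. When no such configuration is immediately visible, I would create one by handle slides inside a single color class, using the doubled symmetry so that the twist contributions arising from one half of the diagram are absorbed against curves coming from the other half. Iterating this reduction, I would aim to arrive at the genus~$0$ diagram.

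The principal difficulty is the dependence on $n$. My preferred strategy is first to perform a block of slides that cancels all $n$-dependent twisting, thereby reducing Figure~\ref{fig:DDn} to a uniform model diagram (morally the $n=0$ case), and then to finish by a finite sequence of handle slides and destabilizations that is independent of $n$. The cases $n>0$ and $n<0$ should be handled in parallel by a symmetric argument, and the $n=0$ case can serve as the base of an induction. Care will be required at every step to keep slides within one color family, and to verify that each destabilization is carried out at geometric, not merely algebraic, intersection number $1$.
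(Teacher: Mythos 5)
There is a genuine gap: your text is a strategy outline rather than a proof. For a theorem of this kind the entire mathematical content \emph{is} the explicit sequence of handle slides, Dehn twists, and destabilizations, and you supply none of them. More seriously, your opening claim that ``such a reduction must exist in principle'' because $DW^{-}(0,n+2)\cong S^4$ is false as stated. The Gay--Kirby uniqueness theorem only guarantees that two diagrams of diffeomorphic $4$-manifolds become related after \emph{stabilizations} as well as slides and diffeomorphisms; whether one can reach the genus~$0$ diagram by destabilizations alone (i.e., monotonically, without first stabilizing) is precisely the content of the $4$-dimensional Waldhausen conjecture that the theorem is testing. You cannot assume the conclusion you are trying to establish. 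A further inaccuracy: the doubling construction of \cite[Corollary 2.13]{MR3999550} produces a reflection symmetry exchanging $\Sigma$ with $\overline{\Sigma}$ while \emph{preserving} each color family ($\alpha^{*}=\alpha\cup\overline{\alpha}\cup(a\cup\overline{a})$, and similarly for $\beta^{*},\gamma^{*}$); it does not interchange the $\beta$-side with the $\gamma$-side, so the symmetry you propose to exploit is misidentified.

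Your plan for the $n$-dependence --- first cancel all $n$-dependent twisting to reduce to a uniform ($n=0$) model, then finish with an $n$-independent sequence --- also does not match what actually happens, and there is no reason to believe it can be arranged. In the paper's proof the $n$-fold Dehn twists are carried along through several destabilizations and are cancelled only at specific stages where the geometry permits it: $t_c^{-n}$ is applied before the second destabilization, $t_{c'}^{n}$ is applied during the third and cancels against $t_c^{n}$ only after a slide makes $c$ and $c'$ parallel and acting with opposite signs on the same $\beta$ curve, and a residual $n$-dependent twist on an $\alpha$ curve survives all the way to the eighth destabilization before it is killed. The uniformity in $n$ is achieved by these interleaved cancellations, not by an up-front normalization, and verifying that each of the nine destabilizing configurations has geometric (not merely algebraic) intersection number one is exactly the work your proposal defers.
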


We also construct another trisection diagram of $DW^{-}(0,n+2)$ using an algorithm taking Kirby diagrams to trisection diagrams developed by Kepplinger \cite{MR4460230}. Figure \ref{fig:DDn2} describes the trisection diagram if we use Figure \ref{fig:DW-(l,k)} as an input Kirby diagram in the algorithm. Our second result is as follows:

\begin{thm*}[Theorem \ref{thm2}]
The trisection diagram depicted in Figure \ref{fig:DDn2} is standard for any integer $n$.
\end{thm*}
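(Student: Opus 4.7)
The plan is to reduce the trisection diagram of Figure \ref{fig:DDn2} to the genus $0$ trisection diagram of $S^4$ by the allowed moves: surface diffeomorphisms, handle slides among curves of the same color, and destabilizations. Because $DW^{-}(0,n+2)$ is diffeomorphic to $S^4$ by \cite{MR0125574}, such a reduction exists in principle; the task is to write it down explicitly and uniformly in the integer $n$.

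First I would unpack what Kepplinger's algorithm produces from the Kirby diagram in Figure \ref{fig:DW-(l,k)}. The algorithm places one $\alpha$ curve for each $1$-handle of the Kirby diagram, and the $\beta$ curves record the attaching circles and framings of the $2$-handles via their intersection pattern with the $\alpha$ curves, while the $\gamma$ curves are determined by the cancellation of the $3$-handles. Counting the handles of the double (one $0$-, one $1$-, two $2$-, one $3$-, one $4$-handle) pins down the genus $g$ of the trisection surface, and identifies a small number of curves as the place where the parameter $n$ enters: only as the geometric intersection count of one particular $\beta$ curve with the $\alpha$ curve coming from the $1$-handle of $W^{-}(0,n+2)$.

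Second, I would locate destabilizing triples coming directly from the doubling construction. In the Kirby picture, the dual $2$-handle introduced by doubling cancels the original $2$-handle, and then the $1$-handle is free; on the trisection side, each such Kirby-calculus cancellation should manifest, after a bounded sequence of same-color handle slides and a surface diffeomorphism, as a disjoint triple $(\alpha_i,\beta_i,\gamma_i)$ of curves bounding disks in the respective handlebodies, i.e.\ a destabilization. Using these I would strip off all handles of the trisection surface that do not participate in the $n$-dependent winding.

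The main obstacle will be removing that $n$-dependent winding. I would handle it by iteratively sliding the offending $\beta$ curve over the $\alpha$ curve arising from the $1$-handle, untwisting one wrap at a time and emulating in the trisection diagram the Kirby-calculus moves that reduce the Mazur relation; an inductive formulation on $|n|$ (treating the sign of $n$ separately as in the Mazur-type setting) seems the cleanest packaging. The subtle point is that these $\beta$ slides must be shown to preserve the simple form of the $\gamma$ curves up to further $\gamma$ slides, so that after the $n$-dependence is eliminated the remaining $\alpha$-$\beta$-$\gamma$ configuration is the same as the $n=0$ case and destabilizes down to genus $0$; verifying this bookkeeping on the $\gamma$ side is where the bulk of the diagrammatic work is expected to lie.
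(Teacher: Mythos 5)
Your opening premise contains a genuine logical error: the fact that $DW^{-}(0,n+2)$ is diffeomorphic to $S^4$ does \emph{not} guarantee that ``such a reduction exists in principle.'' The Gay--Kirby uniqueness theorem only says the diagram becomes standard after some number of \emph{stabilizations}; whether it can be reduced to the genus $0$ diagram using only surface diffeomorphisms, same-color handle slides and destabilizations is exactly the open question (4-dimensional Waldhausen) that the theorem addresses. So you cannot take the existence of a stabilization-free reduction as given and treat the proof as mere transcription --- exhibiting that reduction \emph{is} the proof, and a priori it might not exist.

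Beyond that, your proposal is a strategy outline that defers every substantive verification. You yourself flag that ``verifying this bookkeeping on the $\gamma$ side is where the bulk of the diagrammatic work is expected to lie,'' but that work is never done; the paper's proof of Theorem \ref{thm2} consists precisely of carrying it out, as an explicit chain of five destabilizations (Figures \ref{fig:start_destabili2}--\ref{fig:before_fifth2_2}) with specified handle slides and Dehn twists. Two further points of divergence are worth noting. First, the paper does not treat the $n$-dependence by induction on $|n|$, untwisting one wrap at a time; the winding is carried as an $n$-fold Dehn twist ($t^n$ in the figures) and is cancelled uniformly in $n$ by pairing it against an oppositely signed Dehn twist once the relevant curves have been made parallel by handle slides. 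Second, the paper does not destabilize all the way to genus $0$: it stops at a $(2;1,0,1)$-trisection diagram of $S^4$ and invokes the Meier--Zupan classification \cite[Theorem 1.2]{MR3544545} of such diagrams, which shortcuts the last two destabilizations. As written, your proposal is not yet a proof; to become one it would need the explicit move sequence (or a rigorous inductive scheme with the $\gamma$-curve bookkeeping actually verified), and its framing of existence-in-principle must be removed.
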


We proof the two theorems by performing so many Dehn twists and handle slides. Note that the trisection diagram constructed by the second method cannot be constructed by the first method. See Remark \ref{rem:main} for the proof.

%\section*{Organization}
This paper is organized as follows: In Section \ref{sec:preliminaries}, we review some notions used to mention the main theorems. In Section \ref{sec:main}, we depict the trisection diagrams of  $DW^{-}(0,n+2)$ explicitly and show that they are standard.

\section*{Acknowledgement}
The author would like to express sincere gratitude to his supervisor, Hisaaki Endo, for his helpful comments. The author also thanks Masaki Ogawa and Natsuya Takahashi for many discussions. The author was partially supported by JSPS KAKENHI Grant Number JP23KJ0888.

%%%%%%%%%%%%%%%%%%%%%%%%%%%%%%%%%%%%%%%%%%%%%%%%
\section{Preliminaries}\label{sec:preliminaries}%(relative)trisection, Mazur type(cork), double
In this paper, we suppose that any 4-manifold is compact, connected, oriented and smooth. The 4-manifold with opposite orientation of a 4-manifold $X$ is denoted by $\overline{X}$, and it is denoted by $X \cong Y$ that 4-manifolds $X$ and $Y$ are diffeomorphic.

%%%%%%%%%%%%%%%%%%%%%%%%%%
\subsection{Trisections and relative trisections}%必要なものは，trisection diagram, relative trisection diagram, arc, ダブルの貼り合わせのtrisecくらい
In this subsection, we recall trisections of closed 4-manifolds and relative trisections of 4-manifolds with boundary.

\begin{dfn}
Let $X$ be a closed 4-manifold. A $(g;k_1,k_2,k_3)$-\textit{trisection} of $X$ is a 3-tuple $(X_1,X_2,X_3)$ satisfying the following conditions:
\begin{itemize}
\item $X=X_1 \cup X_2 \cup X_3$,
\item For each $i=1,2,3$, $X_i \cong \natural_{k_i} S^1 \times D^3$,
\item For each $i=1,2,3$, $X_i \cap X_j \cong \natural_{g} S^1 \times D^2$,
\item $X_1 \cap X_2 \cap X_3 \cong \#_{g} S^1 \times S^1 = \Sigma_g$.
\end{itemize}
\end{dfn}

Let $H_{\alpha} = X_3 \cap X_1$, $H_{\beta} = X_1 \cap X_2$ and $H_{\gamma} = X_2 \cap X_3$. The union $H_{\alpha} \cup H_{\beta} \cup H_{\gamma}$ is called the \textit{spine} of the trisection. A trisection is uniquely determined by its spine. Note that if $k_1=k_2=k_3$, the trisection is said to be \textit{balanced}. And the 4-tuple of non-negative integers $(g;k_1,k_2,k_3)$ is called the \textit{type} of the trisection. 

\begin{exm}
The natural decomposition of $S^4$ into three 4-balls is the $(0,0)$-trisection (genus 0 trisection).
\end{exm}

%The decomposition of a trisection is described by a trisection diagram.

\begin{dfn}
A 4-tuple $(\Sigma_g,\alpha,\beta,\gamma)$ is called a $(g;k_1,k_2,k_3)$-\textit{trisection diagram} if the following holds:
\begin{itemize}
\item $(\Sigma_g,\alpha,\beta)$ is a Heegaard diagram of $\#_{k_1}S^1 \times S^2$,
\item $(\Sigma_g,\beta,\gamma)$ is a Heegaard diagram of $\#_{k_2}S^1 \times S^2$,
\item $(\Sigma_g,\gamma,\alpha)$ is a Heegaard diagram of $\#_{k_3}S^1 \times S^2$.
\end{itemize}
\end{dfn}

\begin{exm}
For $\Sigma=S^2$ and $\alpha=\beta=\gamma=\emptyset$, $(\Sigma, \alpha,\beta,\gamma)$ is the genus 0 trisection diagram.
\end{exm}

\begin{rem}
For a genus $g$ trisection whose spine is $H_{\alpha} \cup H_{\beta} \cup H_{\gamma}$, let $\alpha$ (resp. $\beta$, $\gamma$) be the boundary of meridian disk systems of $H_\alpha$ (resp. $H_\beta$, $H_\gamma)$. Then, $(\Sigma_g,\alpha,\beta,\gamma)$ is the trisection diagram with respect to the trisection. Conversely, for a trisection diagram $(\Sigma_g,\alpha,\beta,\gamma)$, by attaching 2-handles to $\Sigma_g \times D^2$ along $\alpha \times \{e^{\frac{2{\pi}i}{3}}\}$, $\beta \times \{e^{\frac{4{\pi}i}{3}}\}$ and $\gamma \times \{e^{2{\pi}i}\}$ with surface framing, we can construct the trisected 4-manifold corresponding to the trisection diagram.
\end{rem}

\begin{dfn}
A \textit{stabilization} of a trisection diagram is the connected-sum of the trisection diagram and one of the genus 1 trisection diagram of $S^4$ depicted in Figure \ref{fig:stabilizationfordiagram}, or the trisection diagram itself obtained by the connected-sum. The reverse operation is called a \textit{destabilization}.
\end{dfn}

\begin{figure}[h]
\begin{center}
\includegraphics[width=8cm, height=3cm, keepaspectratio, scale=1]{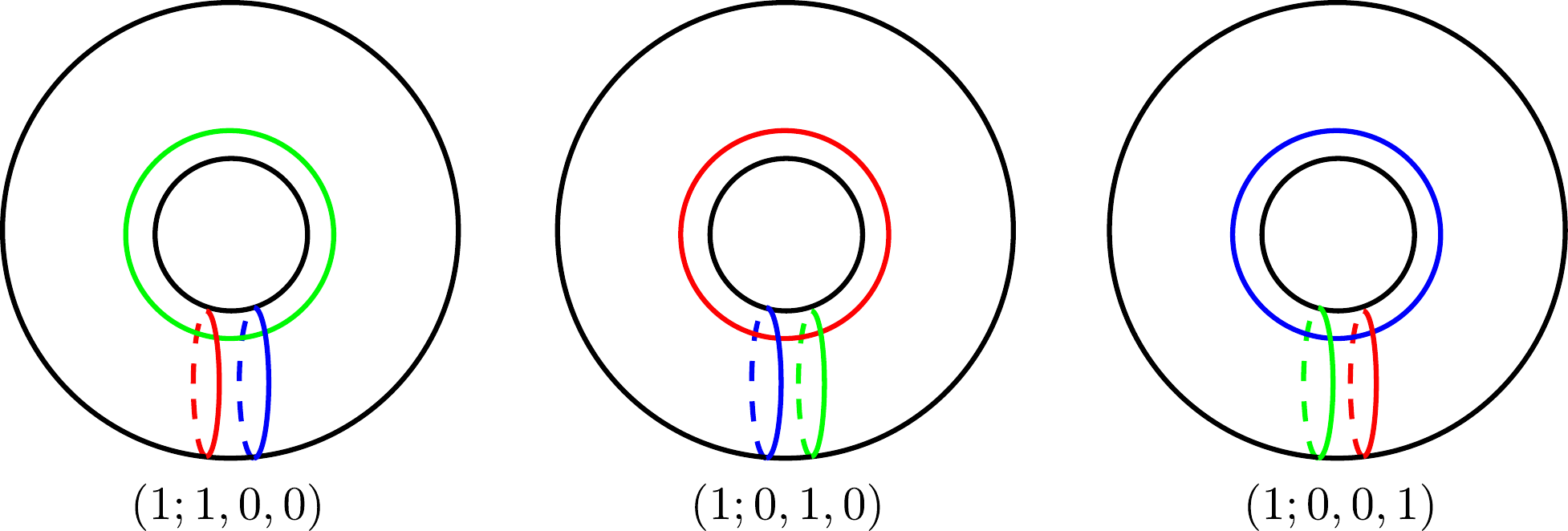}
\end{center}
\setlength{\captionmargin}{50pt}
\caption{The genus 1 trisection diagrams of $S^4$.}
\label{fig:stabilizationfordiagram}
\end{figure}

Note that this (de)stabilization is the unbalanced one. It is obvious that if one stabilizes a $(g;k_1,k_2,k_3)$-trisection diagram by the $(1;1,0,0)$-trisection diagram (resp. $(1;0,1,0)$- and $(1;0,0,1)$-), the type of the resulting trisection diagram is $(g+1,k_1+1,k_2,k_3)$ (resp. $(g+1,k_1,k_2+1,k_3)$ and $(g+1,k_1,k_2,k_3+1)$). Similarly, the type decreases in the case of destabilizations.

Any 4-manifold admits a trisection \cite[Theorem 4]{MR3590351}, and any two trisections of the same 4-manifold are isotopic after some balanced stabilizations \cite[Theorem 11]{MR3590351} (see below for the definition). As a corollary, it is shown that any two closed 4-manifolds are diffeomorphic if and only if corresponding two trisection diagrams are related by surface diffeomorphisms, handle slides among the same family curves (i.e. sliding $\alpha$, $\beta$ and $\gamma$ curves over $\alpha$, $\beta$ and $\gamma$ curves, respectively) and balanced stabilizations \cite[Corollary 12]{MR3590351}. 

Let $X$ and $Y$ be diffeomorphic closed 4-manifolds. Two trisections $(X_1, X_2, X_3)$ of $X$ and $(Y_1,Y_2,Y_3)$ of $Y$ are \textit{diffeomorphic} if there exists a diffeomorphism $h \colon X \to Y$ such that $h(X_i)=Y_i$ for each $i=1,2,3$. Two trisections $(X_1, X_2, X_3)$ and $(Y_1,Y_2,Y_3)$ of a 4-manifold $Z$ are \textit{isotopic} if there exists an isotopy $\{h_t\} \colon Z \to Z$ such that $h_0=id_{Z}$ and $h_1(X_i)=Y_i$ for each $i=1,2,3$. Two trisections are diffeomorphic if and only if corresponding two trisection diagrams are related by only surface diffeomorphisms and handle slides (namely, without stabilizations). It is conjectured that any trisection of $S^4$ is isotopic to the stabilization of the genus 0 trisection \cite[Conjecture 3.11]{MR3544545} (introduced in Section \ref{sec:intro}). This conjecture is called 4-dimensional Waldhausen's conjecture since it can be regarded as a 4-dimensional analogy of Waldhausen's theorem in the theory of Heegaard splittings, which states that any Heegaard splitting of $S^3$ is isotopic to the stabilization of the genus 0 Heegaard splitting. A counterexample of this conjecture may be given via trisection diagrams of a 4-manifold diffeomorphic to $S^4$ since any two isotopic trisections are diffeomorphic.

%%%%%%%%%%%%%%%%%%%%%%%%%%%%%%%%%%%%%%%%%%%%%
%ここからrelativeの話．diagramの定義とarcのアルゴリズムは書いておきたい．
Trisections of 4-manifolds with boundary can be defined, which are called \textit{relative trisections}. See \cite{castro2016relative} for the definition. \textit{Relative trisection diagrams} can be defined as follows (for example see \cite{MR3770114}). Note that as well as the closed case, relative trisection diagrams can be defined independently of relative trisections.
\begin{dfn}
A $(g,k;p,b)$-\textit{relative trisection diagram} is a 4-tuple $(\Sigma, \alpha, \beta, \gamma)$ such that each of $(\Sigma, \alpha, \beta)$, $(\Sigma, \beta, \gamma)$ and $(\Sigma, \gamma, \alpha)$ is related to the diagram depicted in Figure \ref{fig:modeldiagramofrelativetrisection} by surface diffeomorphisms and handle slides.
\end{dfn}

\begin{figure}[h]
\begin{center}
\includegraphics[width=10cm, height=8cm, keepaspectratio, scale=1]{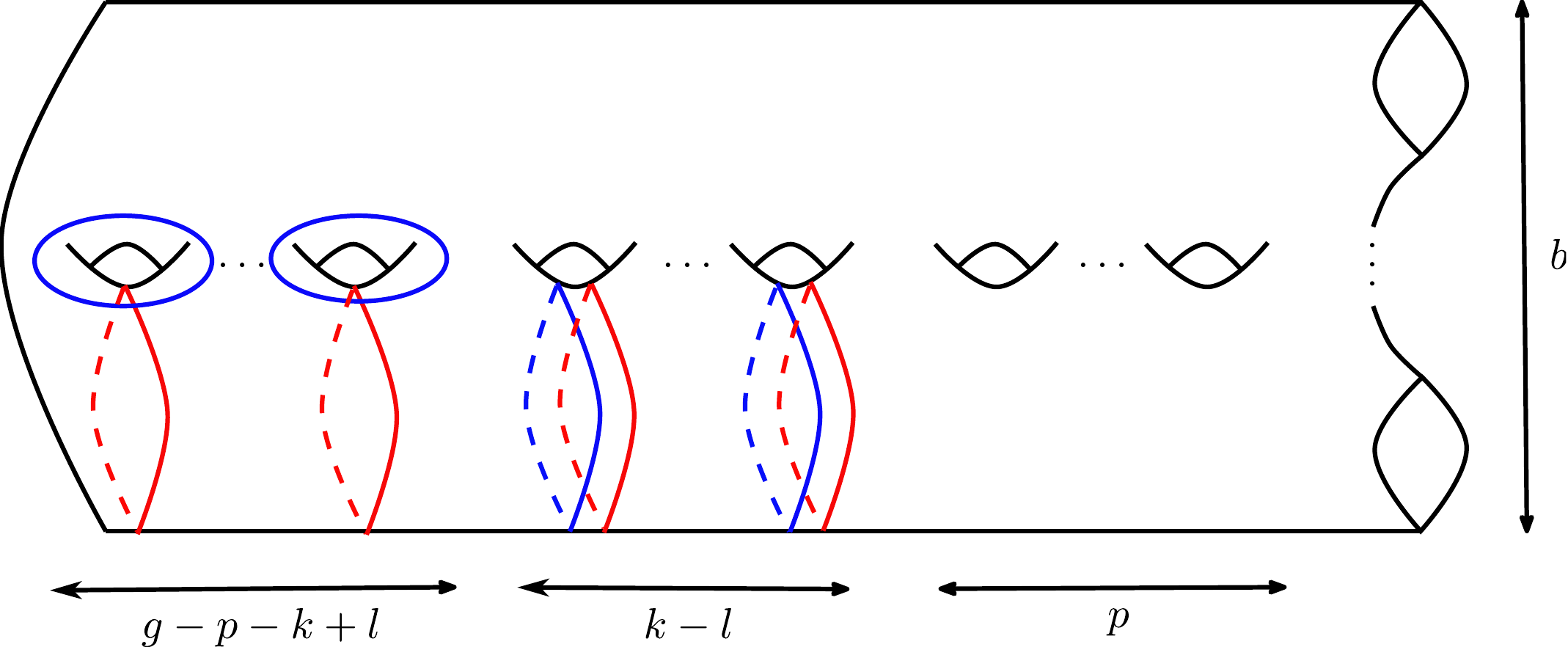}
\end{center}
\setlength{\captionmargin}{50pt}
\caption{The model diagram for relative trisection diagrams, where $l=2p+b-1$.}
\label{fig:modeldiagramofrelativetrisection}
\end{figure}

It is shown that a $(g,k;p,b)$-relative trisection induces an open book decomposition to the boundary of the ambient 4-manifold whose page is a surface of genus $p$ with $b$ boundary components \cite[Lemma 11]{MR3770114}. Also note unbalanced relative trisections and relative trisection diagrams can be considered as well as trisections of closed 4-manifolds.

An  \textit{arced relative trisection diagram} is roughly a relative trisection diagram with arcs \cite{MR4354420}.  An algorithm for drawing the arcs is developed. %(Theorem 5 in \cite{MR3770114}).

\begin{thm}[{\cite[Theorem 5]{MR3770114}}]
Let $(\Sigma, \alpha, \beta, \gamma)$ be a relative trisection diagram and $\Sigma_\alpha$ a surface obtained by surgerying $\Sigma$ along $\alpha$ curves. Then, three kinds of collections of arcs $a$, $b$ and $c$ are obtained as follows:

\begin{enumerate}
\item Draw a collection of arcs $a$ in $\Sigma$ so that cutting $\Sigma_\alpha$ along an image of  $a$ gives a disk.
\item Handle sliding a parallel copy of $a$ over $\alpha$ curves so that the copy does not intersect with $\beta$ curves. If necessary, $\beta$ curves can be handle slided over $\beta$ curves. The resulting collection of arcs is $b$. Let the $\beta$ at the end of this step be $\beta^{'}$.
\item Handle sliding a parallel copy of $b$ over $\beta^{'}$ curves so that the copy does not intersect with $\gamma$ curves. If necessary, $\gamma$ curves can be handle slided over $\gamma$ curves. The resulting collection of arcs is $c$. %Let the $\gamma$ at the end of this step be $\gamma^{'}$.
\end{enumerate}

\end{thm}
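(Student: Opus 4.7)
The plan is to prove each of the three algorithmic steps succeeds by reducing to the model relative trisection diagram. The defining property of a relative trisection diagram is that each of the three pairs $(\Sigma,\alpha,\beta)$, $(\Sigma,\beta,\gamma)$, $(\Sigma,\gamma,\alpha)$ is related by surface diffeomorphisms and handle slides to the model in Figure~\ref{fig:modeldiagramofrelativetrisection}; in the model the required arcs admit an explicit description, and the theorem amounts to showing that this description can be transported back through the moves.

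For step (1), I would observe that $\Sigma_\alpha$ is a compact surface with nonempty boundary, since in the model form the $\alpha$-cut system is deficient. Any such surface deformation retracts onto a 1-complex, so I can take properly embedded arcs dual to a spine to cut $\Sigma_\alpha$ into a disk. Viewing $\Sigma_\alpha$ as obtained from $\Sigma$ by cutting along $\alpha$ and capping the resulting circles with disks, arcs in $\Sigma_\alpha$ that avoid the capping disks pull back to arcs $a \subset \Sigma$ with the required property.

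For step (2), I would first bring $(\Sigma,\alpha,\beta)$ to the model form by surface diffeomorphisms and handle slides of $\beta$-curves over $\beta$-curves, which are precisely the moves the algorithm allows on the $\beta$-family. In the model form one can read off a parallel copy of $a$ that misses the $\beta$-curves, and transporting this picture back through the inverse sequence of moves modifies the parallel copy only by handle slides over $\alpha$-curves. Concretely, each transverse intersection with a $\beta$-curve is removed by band-summing the arc with a suitable $\alpha$-curve along a disk-bounded band. Step (3) is identical in structure after relabeling, starting from $b$ and reducing $(\Sigma,\beta^{'},\gamma)$ to the model.

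The main obstacle will be step (2): one must verify that the reduction to the model can be realized, up to slides of the parallel copy of $a$ over $\alpha$-curves, on the enlarged data $(\Sigma;\alpha,\beta,a)$. This ultimately reduces to the statement that handle slides of curves on the boundary of a compression body act trivially on the isotopy class of a system of compressing arcs modulo slides over the cocore arcs of the meridian disks. Making this precise within the Heegaard-diagrammatic description of the relative trisection is where the technical weight of the argument sits; once it is in place, step (3) follows by the same mechanism with $(\alpha,\beta)$ replaced by $(\beta^{'},\gamma)$ and $a$ replaced by $b$.
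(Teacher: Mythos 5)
This statement is quoted verbatim from \cite[Theorem 5]{MR3770114}; the paper under review contains no proof of it, so there is no in-paper argument to compare yours against. Judged on its own terms, your proposal has a genuine gap, and you have in effect flagged it yourself: the assertion that a parallel copy of $a$ can be made disjoint from the $\beta$ curves using only slides over $\alpha$ curves (together with $\beta$-over-$\beta$ slides) \emph{is} the content of steps (2) and (3), and your text defers exactly this to an unproved lemma about compressing arcs in a compression body. Two specific points fail as written. First, the reduction to the model diagram of Figure \ref{fig:modeldiagramofrelativetrisection} does not trivialize the problem: the surface diffeomorphism and handle slides carrying $(\Sigma,\alpha,\beta)$ to model form carry your chosen arcs $a$ to some a priori complicated arc system in the model surface, not to arcs one can ``read off'' as disjoint from $\beta$; in model form you face the same cleanup problem. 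Second, the claim that ``each transverse intersection with a $\beta$-curve is removed by band-summing the arc with a suitable $\alpha$-curve along a disk-bounded band'' is precisely what must be established; for an arbitrary pair of cut systems no such band exists, and the existence here must come from the standardness of $(\Sigma,\alpha,\beta)$.

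The ingredient you are missing is to use that standardness at the level of the surgered surface: after surgering $\Sigma$ along $\alpha$, each $\beta$ curve either disappears (the dual pairs) or descends to a curve bounding a disk in $\Sigma_\alpha$ (the parallel pairs). The arcs $a$, viewed in $\Sigma_\alpha$, can therefore be isotoped off the images of the $\beta$ curves within $\Sigma_\alpha$; lifting that isotopy back to $\Sigma$, each passage of an arc across a surgery disk becomes a handle slide of the arc over the corresponding $\alpha$ curve. Making this lift precise is where the proof actually lives, and it replaces, rather than instantiates, your proposed ``transport through the model'' mechanism. As submitted, your text is a plausible plan for a proof rather than a proof.
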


For a relative trisection $T_i$ of a 4-manifold $X_i$ ($i=1,2$), if there is a diffeomorphism between $\partial{X_1}$ and $\partial{X_2}$ compatible with their open book decompositions induced from $T_i$, we can obtain a trisection of $X_1 \cup X_2$ by gluing $T_1$ and $T_2$ \cite{MR3999550}. In particular, a trisection of the double can be obtained as follows:

%このあたりでCOの貼り合わせの話を書きたい．typeの話も最後のremarkで使うから書きたい．
\begin{thm}[{\cite[Corollary 2.8]{MR3999550}}]\label{thm:gluing}
If a 4-manifold $X$ with non-empty connected boundary admits a $(g,k;p,b)$-relative trisection, then the double of $X$ admits a $(2g+b-1,2k-\ell)$-trisection, where $\ell=2p+b-1$.
\end{thm}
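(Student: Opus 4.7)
The plan is to apply the general gluing construction for relative trisections along matching induced open book decompositions, specialized to the doubling setup. The double $DX = X \cup_{\partial X} \overline{X}$ is obtained by gluing $X$ to a mirror copy along $\partial X$ via the identity. The given $(g,k;p,b)$-relative trisection $X = X_1 \cup X_2 \cup X_3$ induces a mirror relative trisection $\overline{X} = X_1' \cup X_2' \cup X_3'$, and the open book decompositions induced on $\partial X$ and $\partial \overline{X}$ are mirror images of each other, hence compatible for identity gluing.

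First, I would set $Y_i := X_i \cup_{X_i \cap \partial X} X_i'$ for $i = 1, 2, 3$, yielding a decomposition $DX = Y_1 \cup Y_2 \cup Y_3$, and then verify the defining conditions of a trisection. The central surface $\Sigma$ of the relative trisection is a compact surface of genus $g$ with $b$ boundary components, so doubling along $\partial \Sigma$ gives a closed orientable surface of genus $2g + b - 1$, matching the claimed trisection genus. The pairwise intersections $Y_i \cap Y_j$ arise by doubling the three-dimensional compression-body spine pieces $X_i \cap X_j$ along their sutured boundaries, producing closed three-dimensional handlebodies of genus $2g + b - 1$.

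Next, I would verify that each $Y_i$ is diffeomorphic to $\natural_{2k - \ell}(S^1 \times D^3)$. Since in a $(g,k;p,b)$-relative trisection $X_i \cong \natural_k(S^1 \times D^3)$ and $X_i \cap \partial X$ is diffeomorphic to $P \times I$ where $P$ is a page of genus $p$ with $b$ boundary components, the Euler characteristic computation
\[
\chi(Y_i) = 2\chi(X_i) - \chi(X_i \cap \partial X) = 2(1 - k) - (2 - 2p - b) = 1 - (2k - \ell)
\]
matches $\chi(\natural_{2k - \ell}(S^1 \times D^3))$, which is a necessary condition.

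The main obstacle is upgrading this Euler characteristic match to an actual diffeomorphism $Y_i \cong \natural_{2k-\ell}(S^1 \times D^3)$. I would resolve this by producing an explicit handle decomposition: starting from the handle structure of $X_i$ with a single 0-handle and $k$ 1-handles, the doubling attaches a mirror copy along $P \times I$, introducing $\ell = 2p + b - 1$ cancelling 2-handle/1-handle pairs corresponding to a symplectic basis of $H_1(P)$ together with arcs dual to the binding components. Tracking these cancellations collapses the doubled structure to $2k - \ell$ surviving 1-handles attached to a single 0-handle, confirming the form and completing the proof.
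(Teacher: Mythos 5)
The paper does not prove this statement: it is quoted verbatim from \cite[Corollary 2.8]{MR3999550}, so there is no in-paper argument to compare against. Measured against the standard proof in that reference, your overall strategy is the right one and essentially the same: double sector by sector, setting $Y_i = X_i \cup_{P\times I} X_i'$, check that the central surface doubles to $\Sigma_{2g+b-1}$ and that the spine pieces double to genus-$(2g+b-1)$ handlebodies, and then identify each $Y_i$.

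The genuine gap is in the last step, which you correctly flag as the main obstacle but do not actually close. The Euler characteristic count is fine as a consistency check, but the identification $Y_i \cong \natural_{2k-\ell}(S^1\times D^3)$ cannot follow from it, and the cancellation story you propose ("doubling introduces $\ell$ cancelling 2-handle/1-handle pairs") is not how the gluing works: the page region $P\times I$ is a genus-$\ell$ $3$-dimensional handlebody along which the two copies of $\natural_k(S^1\times D^3)$ are amalgamated, and the diffeomorphism type of $X_i\cup_{P\times I}X_i'$ depends on \emph{how} that handlebody is embedded in $\partial X_i\cong\#_k S^1\times S^2$. (Gluing two $4$-dimensional $1$-handlebodies along a knotted handlebody in their boundaries need not yield a $1$-handlebody.) The missing input is precisely the content of the definition of a relative trisection: the diffeomorphism $X_i\to Z_k=\natural_k(S^1\times B^3)$ is required to carry the decomposition of $\partial X_i$ into $X_i\cap\partial X$ and the two compression bodies to a fixed standard model, so the embedding $P\times I\hookrightarrow \#_k S^1\times S^2$ is standard (its spine is a wedge of $\ell$ circles carried by cores of $1$-handles), and the amalgamation $F_k\ast_{F_\ell}F_k=F_{2k-\ell}$ can then be realized by an explicit handle decomposition, giving $\natural_{2k-\ell}(S^1\times B^3)$. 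One also needs a word on why the gluing is well defined, i.e., that the gluing map may be isotoped to preserve the open book / spine structure (here the mirror symmetry of the double makes this automatic, as you note). With the standard-model input made explicit, your outline becomes the proof in \cite{MR3999550}; without it, the argument does not determine $Y_i$.
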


Note that for a $(g;k_1,k_2,k_3;p,b)$-relative trisection, the type of the trisection is $(2g+b-1;2k_1-\ell,2k_2-\ell,2k_3-\ell)$.

Let $(\Sigma,\alpha,\beta,\gamma,a,b,c)$ be an arced relative trisection diagram of $X$ and $(\overline{\Sigma},\overline{\alpha},\overline{\beta},\overline{\gamma},\overline{a},\overline{b},\overline{c})$ the arced relative trisection diagram of $\overline{X}$ obtained by taking the mirror image of $(\Sigma,\alpha,\beta,\gamma,a,b,c)$. We define $\Sigma^{*}=\Sigma \cup \overline{\Sigma}$ and $\alpha^{*}= \alpha \cup \overline{\alpha} \cup (a \cup \overline{a})$ (we define $\beta^{*}$ and $\gamma^{*}$ in the same way).

\begin{thm}[{\cite[Corollary 2.13]{MR3999550}}]
The 4-tuple $(\Sigma^{*};\alpha^{*},\beta^{*},\gamma^{*})$ is a trisection diagram corresponding to the trisection in Theorem \ref{thm:gluing}. 
\end{thm}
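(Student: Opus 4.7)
My plan is to identify the trisection of $DX$ produced by Theorem \ref{thm:gluing} and then check that $\alpha^{*}$, $\beta^{*}$, $\gamma^{*}$ are exactly the boundaries of complete meridian disk systems for its three $3$-dimensional handlebodies. From the gluing construction, the three $4$-dimensional handlebodies of the trisection are $X_i^{*}=X_i \cup_{X_i \cap \partial X} \overline{X_i}$, their pairwise intersections are $H_\ast^{*} = H_\ast \cup_{H_\ast \cap \partial X} \overline{H_\ast}$, and the common central surface is $\Sigma^{*}=\Sigma \cup \overline{\Sigma}$. It remains to identify meridian disks of $H_\alpha^{*}$, $H_\beta^{*}$, $H_\gamma^{*}$ and read their boundaries on $\Sigma^{*}$.

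For $H_\alpha^{*}$, recall that $H_\alpha$ is a compression body from $\Sigma$ to a page $P$ of the induced open book on $\partial X$, so its meridian disks split into two types: (i) disks bounded by the $\alpha$ curves on $\Sigma$, and (ii) properly embedded disks meeting $\Sigma$ in the arcs $a$ and meeting $P$ in dual arcs that, by the defining property of $a$ in \cite[Theorem 5]{MR3770114}, cut $P$ into a disk. When we double, the type (i) disks pair off with their mirrors to yield closed disks in $H_\alpha^{*}$ bounded by $\alpha \cup \overline{\alpha}$, while the type (ii) disks glue to their mirror copies along the dual arcs in $P$ to produce closed disks bounded by the closed curves $a \cup \overline{a}$ on $\Sigma^{*}$. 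Hence $\alpha^{*}=\alpha \cup \overline{\alpha} \cup (a \cup \overline{a})$ is a full meridian cut system for $H_\alpha^{*}$. The same argument applied to $b$ and $c$ handles $\beta^{*}$ and $\gamma^{*}$; the handle slides in the algorithm for $b$ and $c$ only modify the other $\beta$ or $\gamma$ curves within their isotopy classes, so they do not affect the Heegaard-type identifications.

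Finally, to conclude that $(\Sigma^{*},\alpha^{*},\beta^{*},\gamma^{*})$ is a trisection diagram I would check that each pairwise diagram $(\Sigma^{*},\alpha^{*},\beta^{*})$, $(\Sigma^{*},\beta^{*},\gamma^{*})$, $(\Sigma^{*},\gamma^{*},\alpha^{*})$ presents the correct connected sum $\#_{2k_i-\ell}S^{1}\times S^{2}$, and this follows directly from the type count in Theorem \ref{thm:gluing} once the meridian identification above is established. The main technical obstacle is to verify that the type (ii) meridian disks on the two sides actually glue to an embedded closed disk in $H_\alpha^{*}$, rather than only matching up as immersed surfaces. I would arrange this by isotoping the dual arcs in $P$ so that they are preserved under the mirror identification, which is possible because the arcs $a$, $b$, $c$ are chosen only up to such isotopy in the definition of arced relative trisection diagrams, and because properly embedded arcs in a disk are boundary-parallel. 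Once these verifications are in place, the fact that both constructions yield the same spine completes the proof.
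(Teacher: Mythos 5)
This statement is quoted from \cite[Corollary 2.13]{MR3999550} and the paper supplies no proof of its own, so there is nothing internal to compare against; your argument is essentially the standard proof given in that reference. Identifying the spine of the doubled trisection as $H_{*}^{*}=H_{*}\cup_{P}\overline{H_{*}}$ and exhibiting $\alpha\cup\overline{\alpha}$ together with the doubled arcs $a\cup\overline{a}$ (coming from the vertical, boundary-parallel disks of the compression body) as a complete meridian system is exactly the right mechanism, and the curve count $2(g-p)+(2p+b-1)=2g+b-1$ matches the genus of $\Sigma^{*}$. One small imprecision: the handle slides producing $b$ and $c$ do not keep the $\beta$ and $\gamma$ curves ``within their isotopy classes''---they genuinely change the curves---but since sliding curves of one family over each other does not change the handlebody they determine, your conclusion is unaffected.
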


%Note that we need to draw arcs if we consider gluing of two relative trisection diagrams.

%%%%%%%%%%%%%%%%%%%%%
\subsection{Mazur type 4-manifolds}
A contractible 4-manifold with boundary is called of \textit{Mazur type} if it admits a handle decomposition consisting of only single 0-, 1-, and 2-handle. Akbulut and Kirby \cite{MR0544597} introduced a Mazur type 4-manifold $W^{\pm}(\ell,k)$ depicted in Figure \ref{fig:AK}. This can be regarded as a generalization of the Akbulut cork since $W^{-}(0,0)$ is diffeomorphic to the Akbulut cork, which is the first example of corks found by Akbulut. The Mazur type $W^{\pm}(\ell,k)$ has the following properties:

%\begin{prop}[{\cite[Proposition 1]{MR0544597}}]\leavevmode\par For any integers $\ell$ and $k$,
%\begin{itemize}
%\item $W^{\pm}(\ell,k) \cong W^{\pm}(\ell+1,k-1)$
%\item $W^{-}(\ell,k) \cong \overline{W^{+}(-\ell,-k+3)}$
%\end{itemize}
%\end{prop}

\begin{prop}[{\cite[Proposition 1]{MR0544597}}]
For any integers $\ell$ and $k$, $W^{\pm}(\ell,k) \cong W^{\pm}(\ell+1,k-1)$ and $W^{-}(\ell,k) \cong \overline{W^{+}(-\ell,-k+3)}$.
\end{prop}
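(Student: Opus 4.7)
The plan is to establish both diffeomorphisms by direct Kirby calculus on the explicit handle diagram of $W^{\pm}(\ell,k)$ in Figure~\ref{fig:AK}. Recall that $W^{\pm}(\ell,k)$ is built from one $0$-handle, one $1$-handle, and one $2$-handle whose attaching circle has two parametric twist regions (of sizes $\ell$ and $k$) sitting on either side of the $1$-handle, together with one distinguished clasp whose sign is recorded by the superscript $\pm$.

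For the first identity $W^{\pm}(\ell,k) \cong W^{\pm}(\ell+1,k-1)$, I would exhibit a single ambient isotopy of the attaching circle in the boundary of the $0$/$1$-handle union. Concretely, pushing one strand of the attaching circle across one of the balls representing the $1$-handle shifts one full twist from the $k$-region into the $\ell$-region. This is a planar isotopy that preserves the framing of the $2$-handle and the sign of the distinguished clasp, so it leaves the diffeomorphism type unchanged and simply relabels the parameters from $(\ell,k)$ to $(\ell+1,k-1)$.

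For the second identity $W^{-}(\ell,k) \cong \overline{W^{+}(-\ell,-k+3)}$, I would start from the diagram of $W^{+}(\ell,k)$ and take its mirror image, which by definition presents $\overline{W^{+}(\ell,k)}$. Mirroring reverses every crossing, so the positive clasp becomes negative (turning $W^{+}$ into $W^{-}$) and the $\ell$ parametric twists become $-\ell$ twists, which accounts for the first new parameter. The $k$ parametric twists similarly negate, but the blackboard framing of the $2$-handle's attaching circle now picks up a shift from the altered writhe of the stationary features of the diagram (the clasp and the strands threading through the $1$-handle); to rewrite the mirrored diagram back into the standard $W^{-}(-\ell,\,\cdot\,)$ normal form one has to absorb this shift into the second twist region, producing the constant $+3$ and giving $-k+3$.

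The main obstacle I anticipate is the writhe bookkeeping for the second identity: one has to count every crossing contributed by the fixed part of the diagram---not just the parametric twists---in order to verify that the correction to the $k$-parameter is exactly $+3$ and not some other small integer. The first identity, by contrast, requires only a one-move planar isotopy with no framing calculation. A safer way to organize the writhe calculation is to use the first identity to first place both sides in a common normal form (where all twists live in a single region), carry out the mirror operation on that normal form, and only then redistribute twists using the first identity in reverse; the constant $+3$ then emerges from a single, very short writhe count.
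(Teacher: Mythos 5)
The paper itself offers no proof of this proposition --- it is quoted directly from Akbulut--Kirby \cite{MR0544597} --- so the only meaningful comparison is with the argument in that source, and your plan is essentially that argument: prove the first identity by sliding a full twist through the 1-handle, and the second by mirroring the diagram and renormalizing. For the first identity your sketch is essentially complete, with one terminological caveat: the move is not a planar isotopy but an isotopy of the attaching link in the boundary of the union of the 0- and 1-handles, pushing a full twist of the two strands into one attaching ball and out the other; it works precisely because both strands of each twist box run over the 1-handle, and since it is an isotopy of the framed attaching link no framing check is needed.

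For the second identity, however, your write-up stops exactly where the content of the statement begins. That mirroring flips the clasp sign and negates the twist boxes is automatic; the entire assertion is that the correction constant is $+3$ rather than $0$, $2$, or $4$, and you explicitly defer that count. Your framing language is also slightly off: the 2-handle framing in Figure \ref{fig:AK} is recorded as an integer coefficient, which simply negates under mirroring, so there is no ``blackboard framing shift'' to absorb into the $k$-box. The $+3$ arises instead from isotoping the mirrored fixed part of the diagram (the clasp and the strands threading the 1-handle) back into the standard $W^{-}$ template, which creates or cancels full twists between the two strands. Your proposed normalization is the right way to organize this: by the first identity the manifold depends only on $m=\ell+k$ and the sign, and the claim becomes $\overline{W^{-}(m)}\cong W^{+}(3-m)$, which reduces the verification to one short crossing count on a diagram with a single twist box. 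Until that count is actually carried out, though, what you have is a correct plan rather than a proof of the one nontrivial numerical claim in the statement.
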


\begin{rem}
It is known that the double of each Mazur type is diffeomorphic to $S^4$ \cite{MR0125574}. In particular, we consider the double of $W^{-}(0,n+2)$ for any integer $n$ in Section \ref{sec:main}.
\end{rem}

\begin{figure}[h]
\begin{tabular}{cc}
\begin{minipage}{0.4\hsize}%小さくすると寄る
\begin{center}
\includegraphics[width=8cm, height=3cm, keepaspectratio, scale=1]{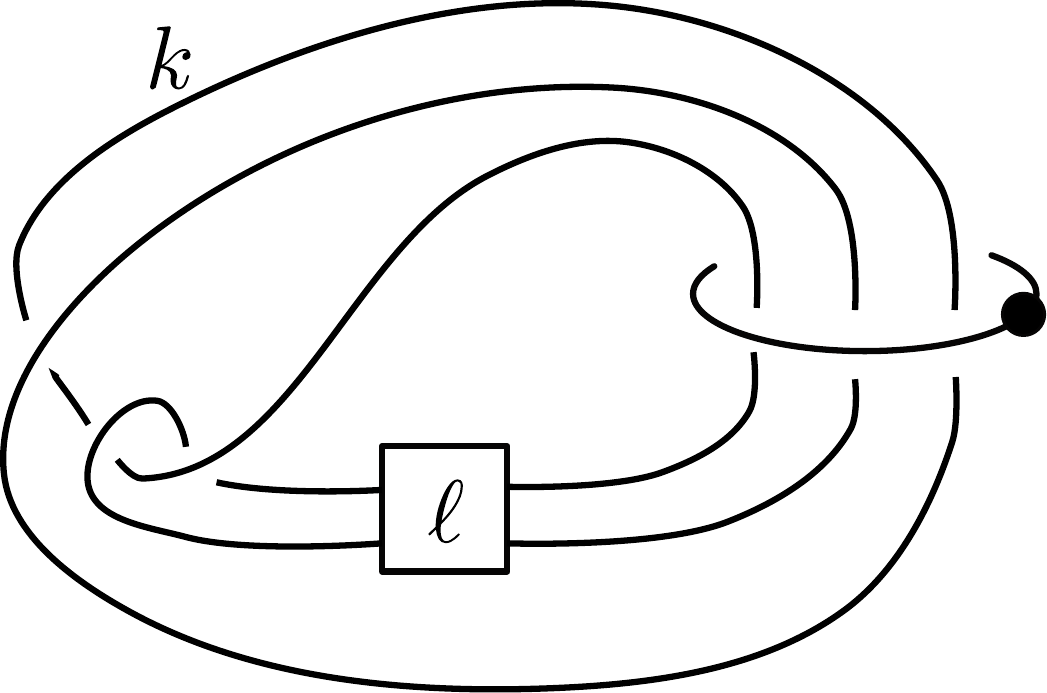}
\end{center}
\setlength{\captionmargin}{50pt}
\subcaption{$W^{-}(\ell,k)$}
\label{fig:}
\end{minipage} 
\begin{minipage}{0.4\hsize}
\begin{center}
\includegraphics[width=8cm, height=3cm, keepaspectratio, scale=1]{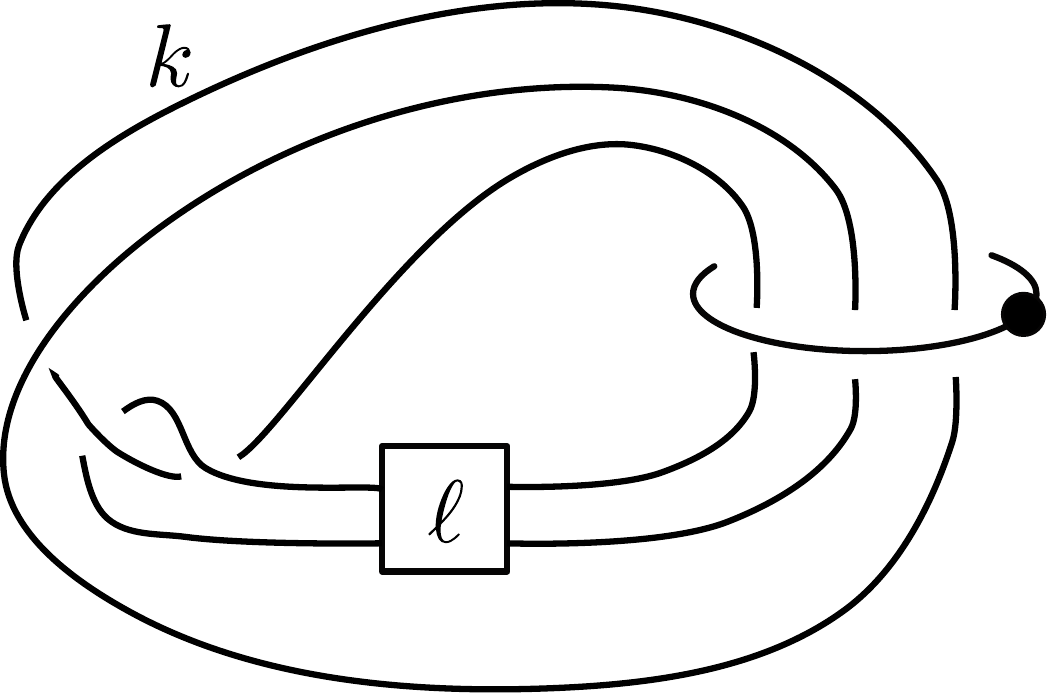}
\end{center}
\setlength{\captionmargin}{50pt}
\subcaption{$W^{+}(\ell,k)$}
\label{fig:}
\end{minipage} 
%\begin{minipage}{0.33\hsize}
%\begin{center}
%\includegraphics[width=8cm, height=3cm, keepaspectratio, scale=1]{final_g.pdf}
%\end{center}
%\end{minipage} 
\end{tabular}
\setlength{\captionmargin}{50pt}
\caption{Kirby diagrams of the Mazur type $W^{\pm}(\ell,k)$ introduced by Akbulut and Kirby.}
\label{fig:AK}
\end{figure}

%%%%%%%%%%%%%%%%%%%%%%%%%%%%%%%%%%%%%%%%%%%%%%%%
\subsection{An algorithm converting Kirby diagrams into trisection diagrams}\label{subsec:algorithm}
Kepplinger \cite{MR4460230} introduced an algorithm which takes Kirby diagrams to trisection diagrams. In this subsection, we recall a simplified version of the algorithm also introduced in \cite{MR4460230}, which can make the genus of the resulting trisection diagram smaller significantly.

\begin{thm}[{\cite[Algorithm 1 and Remark 2.2]{MR4460230}}]
Let $X$ be a closed orientable 4-manifold. Suppose that $X$ is described by a Kirby diagram with some 1-handles and a framed attaching link $L=K_1 \cup \dots \cup K_\ell$. One can obtain a trisection diagram of $X$ from this Kirby diagram by performing the following steps:

\begin{enumerate}
\item Convert each pair of attaching balls describing a 1-handle into a pair of disks and add a parallel pair of red and blue curves parallel to the boundary of the disk.
\item Add a +1- or (-1)-kink to an attaching circle which has no self-crossings.
\item Convert each crossing as in Figure \ref{fig:convert}. Then, one matches the framing of $K_i$ with the surface framing by winding around a handle (i.e. Dehn twisting on the red curve in Figure \ref{fig:convert}) that only $K_i$ runs. Let $g$ be the number of $\alpha$, or equivalently, $\beta$ curves at the end of this step.
\item Handle sliding $\alpha$ curves so that $\lvert \alpha_i \cap K_j \rvert = \delta_{ij}$ ($1 \le i \le g$, $1 \le j \le \ell$).
\item Convert $K_j$ to $\gamma_j$.
\item Draw $\gamma_i$ ($\ell+1 \le i \le g$) as a curve parallel to $\alpha_i$ which does not intersect with $K_j$ ($1 \le j \le \ell$).
\end{enumerate}
\end{thm}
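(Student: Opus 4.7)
The plan is to verify two things in turn: first, that the 4-tuple $(\Sigma,\alpha,\beta,\gamma)$ output by the algorithm satisfies the three Heegaard-diagram conditions in the definition of a trisection diagram; second, that the 4-manifold recovered from it by the standard 2-handle attachment recipe is diffeomorphic to $X$.

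First I would track how $\Sigma$, $\alpha$, and $\beta$ are assembled in steps (1)--(3). Each 1-handle contributes one handle to $\Sigma$ together with a parallel red/blue meridian pair, and each crossing in the link projection is replaced locally as in Figure \ref{fig:convert}, adding one further red/blue pair. Because the construction is local and the two sides of $\Sigma$ naturally correspond to regular neighborhoods of the ``over'' and ``under'' portions of the projection, cutting $\Sigma$ along the $\alpha$ curves yields a disjoint union of disks, so $(\Sigma,\alpha)$ bounds a 3-dimensional handlebody, and the same holds for $(\Sigma,\beta)$. A routine count of the resulting spheres shows that $(\Sigma,\alpha,\beta)$ is a Heegaard diagram of $\#_{k_1}S^1\times S^2$, where $k_1$ equals the number of 1-handles of $X$.

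Next I would handle the $\gamma$ curves. Step (4) uses handle slides of the $\alpha$'s to normalize intersections so that $|\alpha_i\cap K_j|=\delta_{ij}$, which does not affect the Heegaard-diagram type. After step (5) the attaching circles themselves are relabeled $\gamma_j$, and step (6) completes $\gamma$ with curves parallel to the leftover $\alpha_i$. For $(\Sigma,\gamma,\alpha)$, cancelling the $\ell$ pairs $(\gamma_j,\alpha_j)$ that intersect once leaves only parallel pairs, giving the standard Heegaard diagram of $\#_{k_3}S^1\times S^2$. For $(\Sigma,\beta,\gamma)$, a symmetric argument works: sliding $\gamma_j$ over the parallel $\gamma_i$ with $i>\ell$ brings its intersection pattern with $\beta$ into a standard form, again yielding a connected sum of $S^1\times S^2$'s.

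The main obstacle is to identify the 4-manifold determined by $(\Sigma;\alpha,\beta,\gamma)$ with $X$ itself; this is entirely a question of \emph{framings}. Attaching 4-dimensional 2-handles to $\Sigma\times D^2$ along $\gamma\times\{\mathrm{pt}\}$ uses the surface framing of each $\gamma_j$ on $\Sigma$, and one must check that this agrees with the Kirby framing of $K_j$. This is precisely why step (2) adds a kink to every self-crossing-free circle (ensuring that a crossing is available on which to twist) and why the framing-correcting Dehn twists in step (3) act on the red curves traversed only by $K_j$: they adjust the surface framing by $\pm 1$ per twist without altering any other circle. Once this framing bookkeeping is carried out, comparing the induced handle decomposition to the original Kirby diagram completes the identification $X\cong X(\Sigma;\alpha,\beta,\gamma)$.
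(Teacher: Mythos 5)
First, a point of comparison: the paper does not prove this statement at all --- it is quoted verbatim from Kepplinger \cite{MR4460230} as background material in Subsection \ref{subsec:algorithm}, so there is no in-paper proof to measure your argument against. Judged on its own terms, your outline follows the natural strategy (verify the three Heegaard-diagram conditions, then identify the resulting 4-manifold with $X$), but it has two genuine gaps. The more serious one is your treatment of $(\Sigma,\beta,\gamma)$. You claim ``a symmetric argument works,'' but the situation is not symmetric: the pairs $(\Sigma,\alpha,\beta)$ and $(\Sigma,\gamma,\alpha)$ are standard for purely local reasons (parallel or once-intersecting pairs built into steps (1), (3), (4) and (6)), whereas $(\Sigma,\beta,\gamma)$ is the Heegaard diagram of the boundary of the sub-handlebody $h^0\cup(\text{1-handles})\cup(\text{2-handles})$, i.e.\ of the 3-manifold obtained by surgery on the framed link $L$ inside $\#(S^1\times S^2)$. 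That this 3-manifold is $\#_{k_2}S^1\times S^2$ is exactly where the hypothesis that $X$ is \emph{closed} (so that 3- and 4-handles can be attached) enters; it cannot be deduced by sliding $\gamma_j$ over the parallel $\gamma_i$ with $i>\ell$, and no local normalization of intersection patterns with $\beta$ will produce it. Any correct proof must invoke closedness at this point.

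The second gap is the final identification $X\cong X(\Sigma;\alpha,\beta,\gamma)$, which you reduce to ``comparing the induced handle decomposition to the original Kirby diagram.'' The framing bookkeeping you describe (the role of the kink in step (2) and of the Dehn twists in step (3)) is necessary but not sufficient: one still has to exhibit the trisected 4-manifold built from $(\Sigma;\alpha,\beta,\gamma)$ as the Gay--Kirby trisection associated to the handle decomposition $X_1=h^0\cup(\text{1-handles})$, $X_2\supset(\text{2-handles})$, $X_3=(\text{3-handles})\cup h^4$, and in particular check that $\Sigma$ sits in $\partial X_1$ as the correct Heegaard surface with the attaching link $L$ pushed onto it with surface framing equal to the Kirby framing. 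One smaller imprecision worth fixing: at a crossing the red and blue curves introduced in step (3) must intersect once (not be parallel), since otherwise your count would give $(\Sigma,\alpha,\beta)$ a Heegaard diagram of $\#_{g}S^1\times S^2$ rather than $\#_{k_1}S^1\times S^2$ with $k_1$ the number of 1-handles; your phrase ``adding one further red/blue pair'' leaves this ambiguous.
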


\begin{figure}[h]
\begin{center}
\includegraphics[width=9cm, height=8cm, keepaspectratio, scale=1]{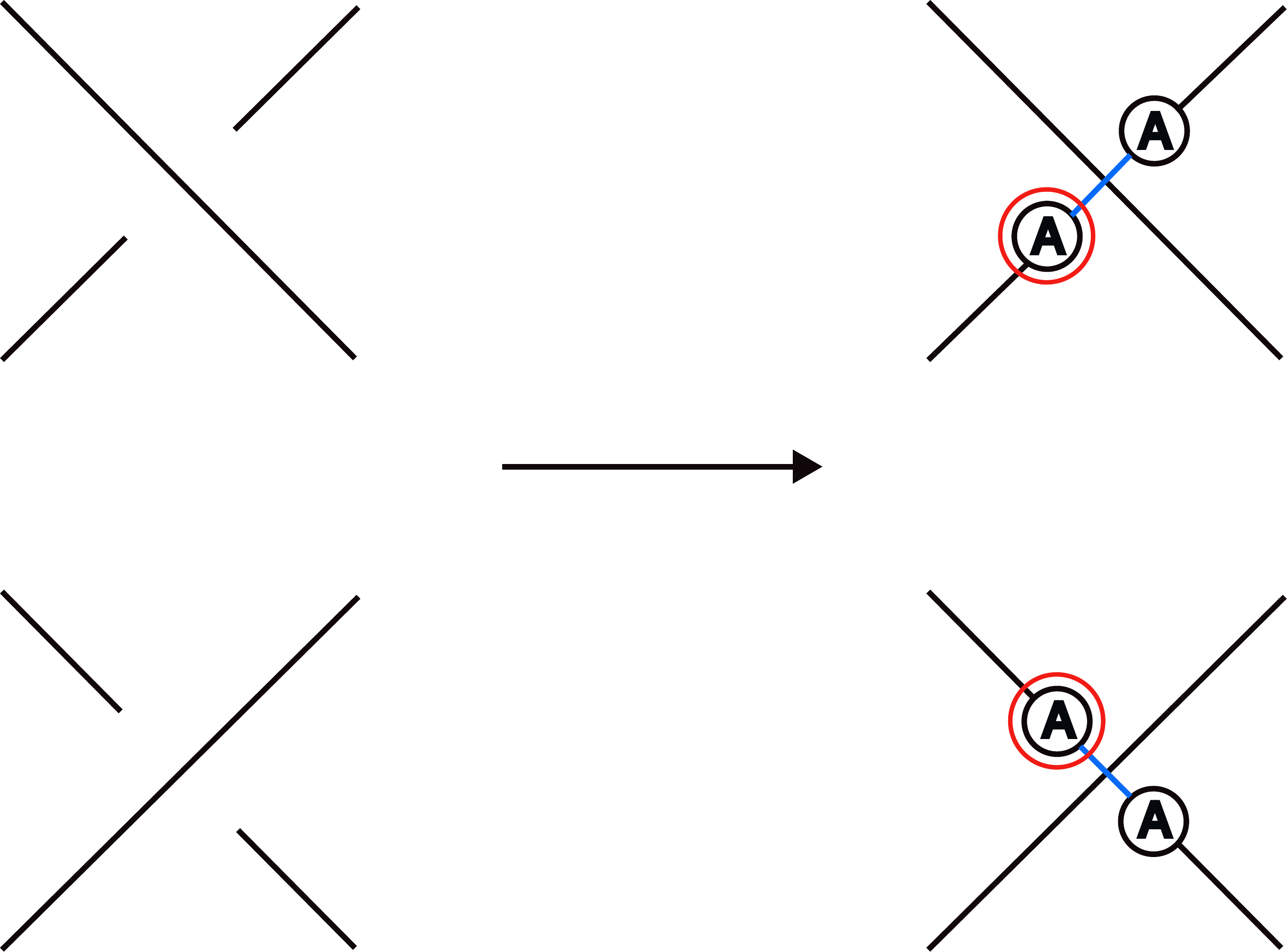}
\end{center}
\setlength{\captionmargin}{50pt}
\caption{At the step 3 in the algorithm, each crossing is converted as described in this figure.}
\label{fig:convert}
\end{figure}

%%%%%%%%%%%%%%%%%%%%%
\section{main theorems}\label{sec:main}
In this section, we construct two kinds of trisection diagrams of the double $DW^{-}(0,n+2)$ of $W^{-}(0,n+2)$ in a different way and show that they are standard for all $n$.
%In this section, we show the theorems introduced in Section \ref{sec:intro}.

For any integer $n$, Takahashi \cite{takahashi2023exotic} constructed a $(3,3;0,4)$-relative trisection diagram of $W^{-}(0,n+2)$. The diagram is depicted in Figure \ref{fig:Takahashi}. Note that the right handed Dehn twist along a simple closed curve $c$ is denoted by $t_c$.

\begin{figure}[h]
\begin{center}
\includegraphics[width=10cm, height=8cm, keepaspectratio, scale=1]{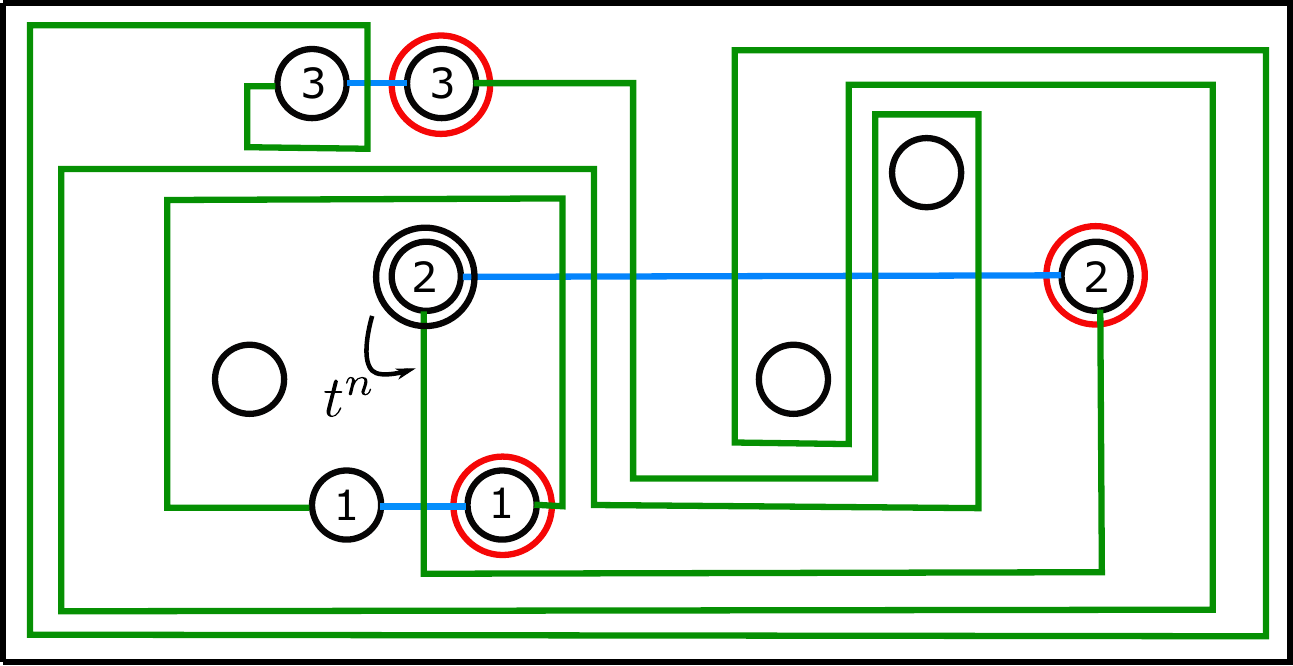}
\end{center}
\setlength{\captionmargin}{50pt}
\caption{The $(3,3;0,4)$-relative trisection diagram $\mathcal{D}_n$ of $W^{-}(0,n+2)$. The arrow labeled $t^n$ describes the $n$ times Dehn twists for the green curve intersecting with the black curve along it. This notation is used throughout the proofs of the main theorems.}
\label{fig:Takahashi}
\end{figure}

\begin{lem}
An arced relative trisection diagram of $W^{-}(0,n+2)$ is depicted in Figure \ref{fig:artd}.
\end{lem}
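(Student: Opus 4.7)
The plan is to apply the algorithm from \cite[Theorem 5]{MR3770114} directly to the $(3,3;0,4)$-relative trisection diagram $\mathcal{D}_n$ depicted in Figure \ref{fig:Takahashi}. Since $\ell = 2p+b-1 = 3$, I expect exactly three arcs in each of the $a$-, $b$-, and $c$-families, and the goal is to verify that the arcs produced coincide with those shown in Figure \ref{fig:artd}.

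First, I would surger $\Sigma$ along the three $\alpha$ curves to obtain a four-punctured sphere $\Sigma_\alpha$, and choose three arcs $a$ whose complement in $\Sigma_\alpha$ is a single disk. A natural choice is to place $a$ so as to minimize intersections with $\beta$ and $\gamma$, and in particular to keep $a$ clear of the support of the $n$-fold Dehn twist whenever possible. Second, I would take a parallel copy of $a$ and eliminate its intersections with $\beta$ by sliding over $\alpha$ (with auxiliary slides among the $\beta$ curves as needed); the result is the family $b$. Because neither $\alpha$ nor $\beta$ depends on $n$, this step can be carried out uniformly. Third, I would repeat with a parallel copy of $b$, sliding over $\beta'$ (together with auxiliary slides among $\gamma$) to push it off $\gamma$, producing the family $c$.

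The main obstacle is the third step. The $n$-fold Dehn twist $t^n$ appearing in $\gamma$ creates intersection points between the parallel copy of $b$ and the twisted $\gamma$ curve whose count grows linearly with $n$, and removing all of them requires a carefully chosen sequence of slides over a $\beta'$ curve that threads the twist annulus. Once these slides are executed, the effect of the $n$ twists is absorbed into the arcs $c$ in a pattern independent of the bookkeeping complexity, and a direct comparison with Figure \ref{fig:artd} completes the verification.
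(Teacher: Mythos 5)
Your overall strategy is the same as the paper's: both apply the arc-drawing algorithm of \cite[Theorem 5]{MR3770114} to the relative trisection diagram $\mathcal{D}_n$ of Figure \ref{fig:Takahashi} and compare the output with Figure \ref{fig:artd}. Your description of the first two steps (choosing $a$ so that $\Sigma_\alpha$ cut along it is a disk, then producing $b$ by sliding a parallel copy over $\alpha$) is fine and matches what the paper does implicitly.

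The weak point is exactly the step you identify as the main obstacle, and your resolution of it is asserted rather than carried out. You claim that the intersections between the copy of $b$ and the $t^n$-twisted $\gamma$ curve can be removed by slides over $\beta'$ ``threading the twist annulus,'' with the effect of the $n$ twists ``absorbed into the arcs $c$ in a pattern independent of the bookkeeping complexity.'' Nothing in the proposal shows that such a sequence of slides terminates with an $n$-independent picture; naively, each pass of an arc through the support annulus of $t^n$ can create new intersections whose number grows with $n$, so uniformity in $n$ is precisely what needs an argument. The paper resolves this with a specific device you do not use: it keeps the twist symbolic (the $\gamma$ curve is recorded as the $t^n$-image of a reference curve along the black curve) and performs a handle slide of the $\gamma$ curve \emph{together with} the black twisting curve, which is legitimate because handle slides commute with Dehn twists in the appropriate sense (the paper points to Figure 9 of \cite{isoshima2023infinitely} for this). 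That trick is what makes the construction manifestly uniform in $n$ and is the actual content of the proof; without it, or some substitute for it, your third step is a gap.
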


\begin{proof}
We use an algorithm in \cite{MR3770114} to depict the arcs. Note that handle sliding a $\gamma$ curve and the black curve is performed. We can perform such a handle slide for the black curve due to a certain commutative property between handle slides and Dehn twists. See Figure 9 in \cite{isoshima2023infinitely} for details.
\end{proof}

\begin{figure}[h]
\begin{center}
\includegraphics[width=13cm, height=20cm, keepaspectratio, scale=1]{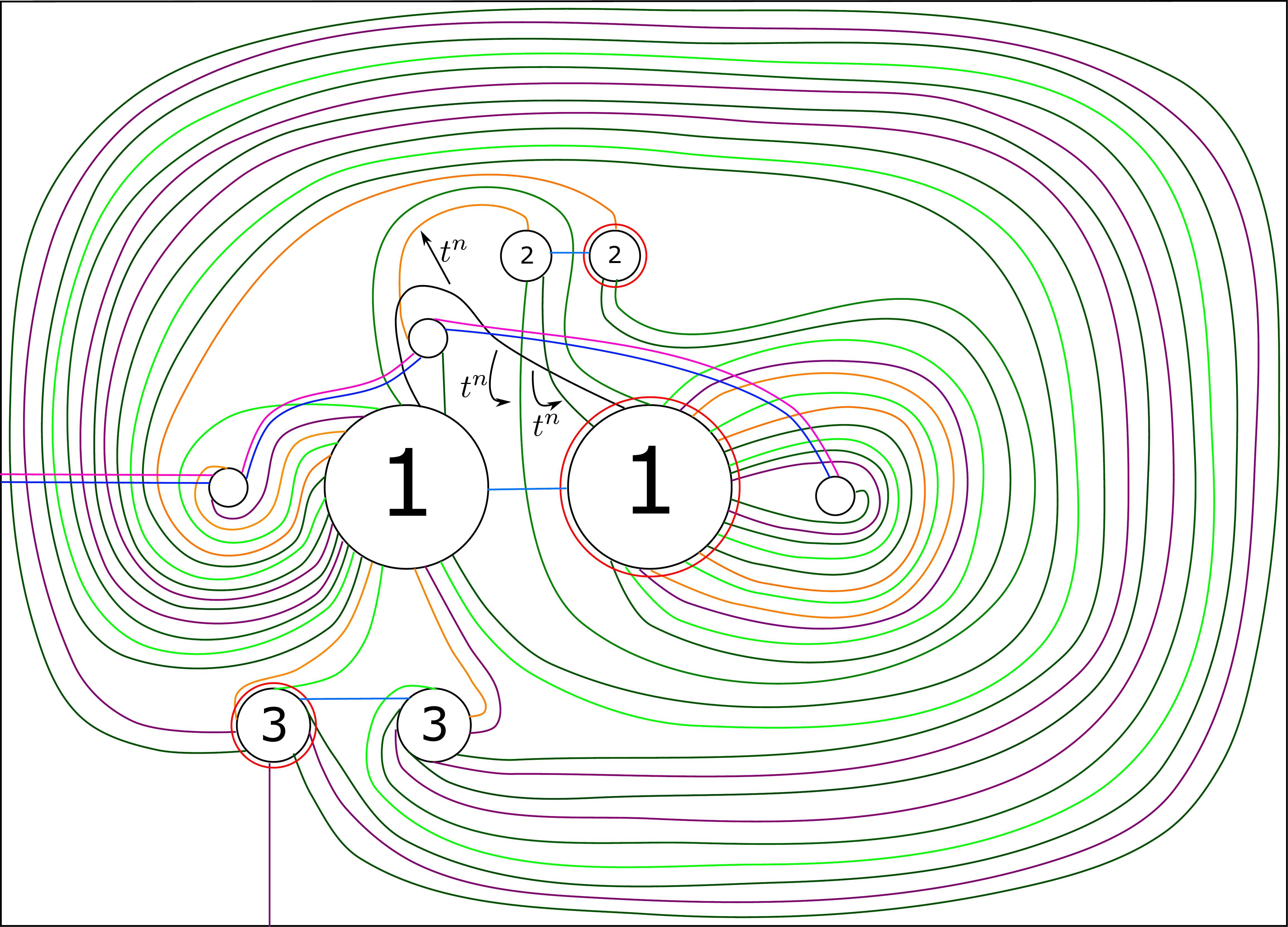}
\end{center}
\setlength{\captionmargin}{50pt}
\caption{An arced relative trisection diagram of $\mathcal{D}_n$.}
\label{fig:artd}
\end{figure}

We can construct a trisection diagram of $DW^{-}(0,n+2)$ from this arced relative trisection diagram by using \cite[Corollary 2.13]{MR3999550}. Figure \ref{fig:DDn} describes the trisection diagram %Let $D\mathcal{D}_n$ denote this trisection diagram depicted in Figure \ref{fig:DDn}.

\begin{figure}[h]
\begin{center}
\includegraphics[width=10cm, height=10cm, keepaspectratio, scale=1]{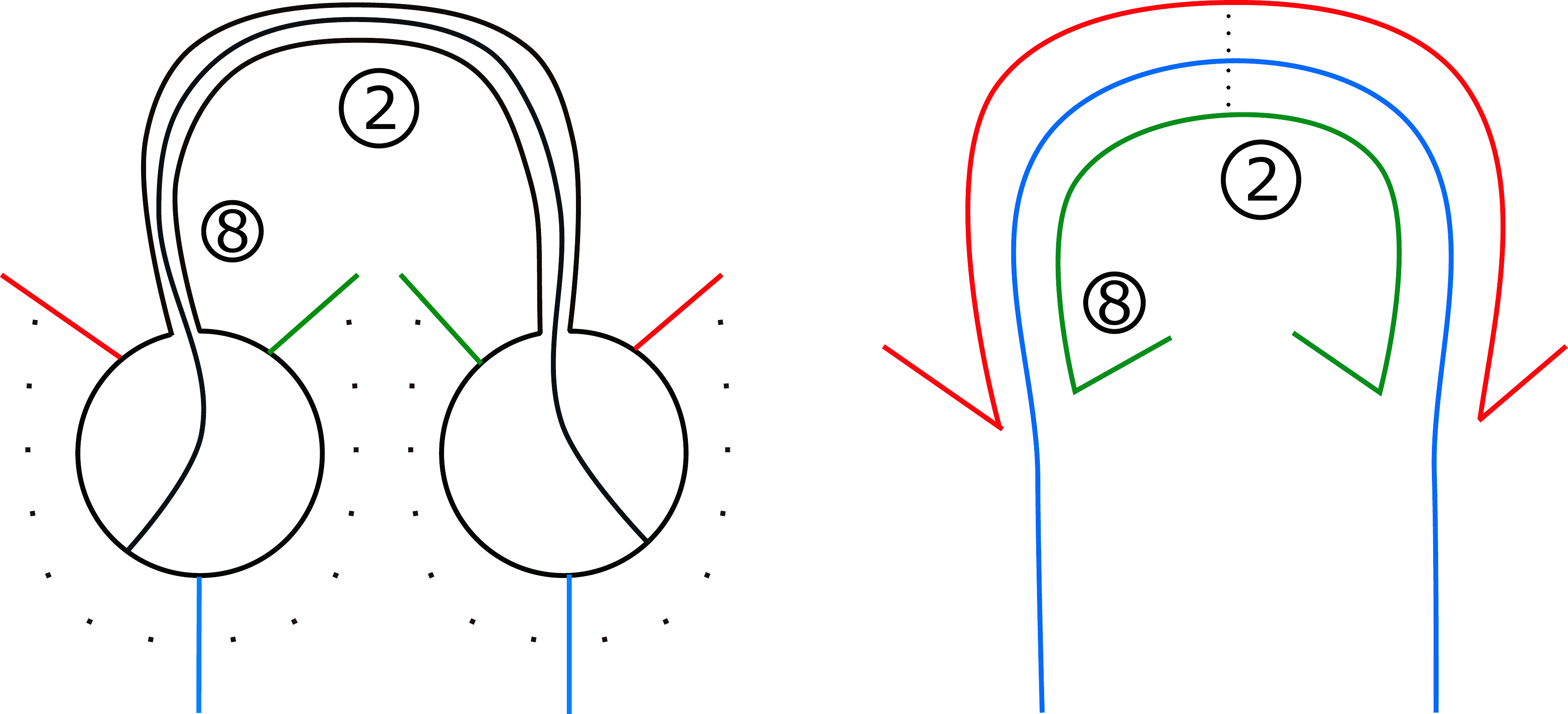}
\end{center}
\setlength{\captionmargin}{50pt}
\caption{The left figure describes the right one in Figures \ref{fig:after1}, \ref{fig:before2_1} and \ref{fig:before2}.}%The identification in Figures \ref{fig:after1}, \ref{fig:before2_1} and \ref{fig:before2}.}
\label{fig:identify}
\end{figure}

\begin{figure}[h]
\begin{center}
\includegraphics[width=10cm, height=10cm, keepaspectratio, scale=1]{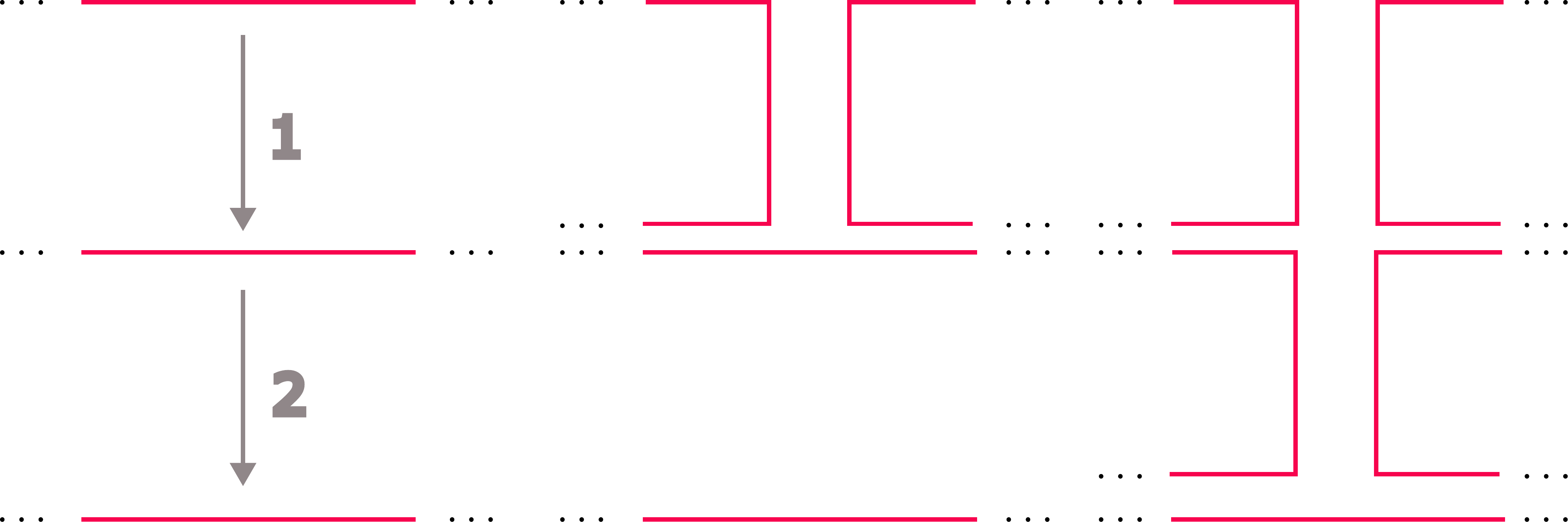}
\end{center}
\setlength{\captionmargin}{50pt}
\caption{In the proofs of our theorems, the most left figure means that firstly we perform the handle slide labeled 1 (the central figure), and then we perform the handle slide labeled 2 (the most right figure).}
\label{fig:handleslide}
\end{figure}

\begin{figure}[h]
\begin{center}
\includegraphics[width=13cm, height=20cm, scale=1]{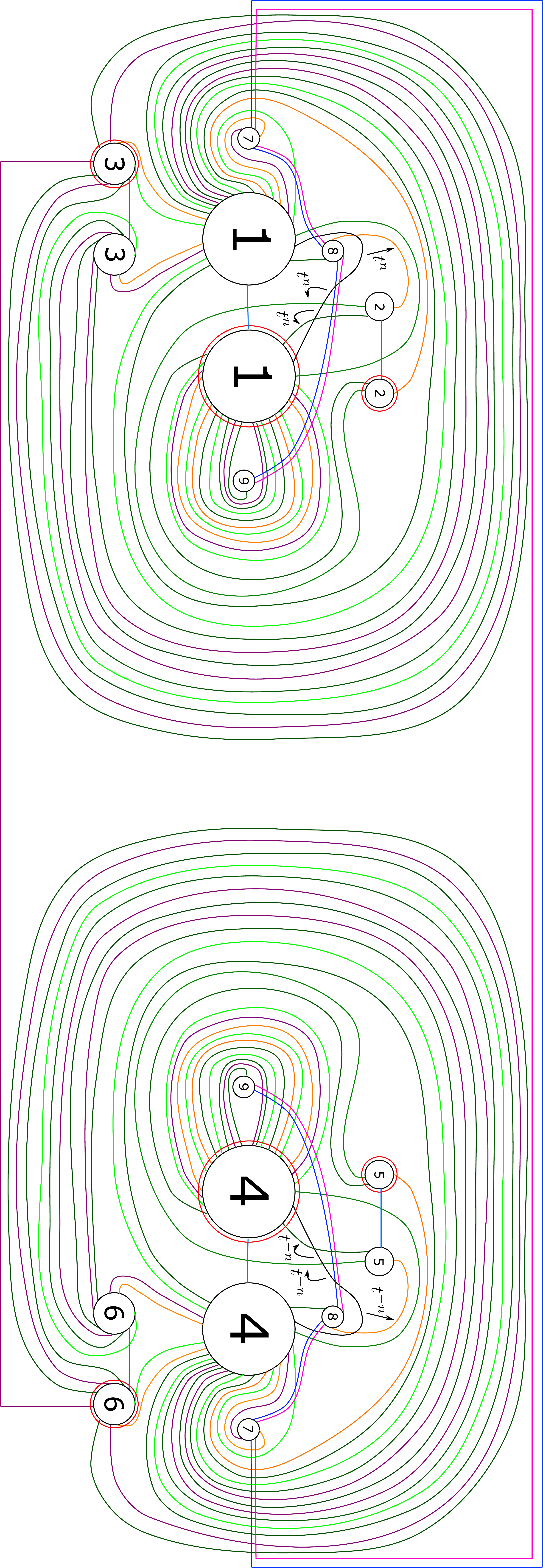}%幅1050
\end{center}
\setlength{\captionmargin}{50pt}
\caption{A $(9,3)$-trisection diagram of $DW^{-}(0,n+2)$ obtained by doubling the arced relative trisection diagram of $\mathcal{D}_n$. The red and pink curves are $\alpha$ curves, and the light and dark blue curves are $\beta$ curves. The other color curves are $\gamma$ curves. We use several colors for the $\gamma$ curves for visual clarity.}
\label{fig:DDn}
\end{figure}

\begin{thm}\label{thm1}
The trisection diagram depicted in Figure \ref{fig:DDn} is standard for any integer $n$.
\end{thm}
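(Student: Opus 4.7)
The plan is to reduce the genus $9$ trisection diagram in Figure \ref{fig:DDn} to the genus $0$ trisection diagram of $S^4$ by a sequence of surface diffeomorphisms, handle slides, and (ultimately) nine destabilizations. Since the diagram is obtained by doubling the arced relative trisection diagram of Figure \ref{fig:artd}, it carries an evident mirror symmetry: each arc-generated curve ($a$, $b$, $c$) has a mirror partner ($\overline{a}$, $\overline{b}$, $\overline{c}$), and these paired curves should provide many of the destabilizing pairs. My first step is to exploit this symmetry to locate, among the $\alpha$, $\beta$, $\gamma$ curves coming from the arcs, cancelling pairs that meet a curve of one other family geometrically once and are disjoint from the remaining family. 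After the initial destabilizations the diagram should collapse to something much smaller than genus $9$.

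The dependence on $n$ sits entirely in the block of Dehn twists $t^n$ applied along the black curve in Figure \ref{fig:Takahashi}, and after doubling it appears symmetrically on the two sides. I would first treat the Dehn twists as surface diffeomorphisms and rewrite them so that they act on the pair of curves created by doubling; here the commutativity between handle slides and Dehn twists noted in the proof of the preceding lemma (and elaborated in \cite{isoshima2023infinitely}) is exactly what is needed to push the twists through the slides. The aim is to slide the pair of curves intersecting the black curve so that their parallel images on the two sides of the doubling seam cancel each other, making the net effect of $t^n$ trivial up to handle slides. Once this is accomplished, the remaining diagram is independent of $n$ and the rest of the argument is a single (finite) sequence of moves valid for every $n$ at once.

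Concretely, I would proceed in the following order. First, apply a sequence of handle slides that use the pink and dark-blue doubled copies of the original $\alpha$ and $\beta$ curves to cancel the $n$-dependent intersections coming from the $t^n$ region, in the spirit of Figure \ref{fig:handleslide}; this should absorb all powers of the Dehn twist. Second, perform the resulting surface diffeomorphism to put the diagram in a tractable form, then identify three ``easy'' destabilizing triples coming directly from the doubling (arc-curves meeting their mirror partners once). Third, iterate: after each destabilization, new cancelling pairs become visible, and by alternating handle slides among $\alpha$, $\beta$, $\gamma$ curves one brings the genus down to zero in a total of nine destabilizations.

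The main obstacle is bookkeeping: the trisection diagram has three curves each of three colors plus the doubled copies, so there are roughly eighteen simple closed curves on a genus $9$ surface, and every handle slide changes several of the curves simultaneously. Ensuring that the slides used to eliminate $t^n$ do not create uncontrollable new intersections elsewhere is the delicate part, and this is where one expects to spend most of the figures. A subsidiary difficulty is verifying, after each destabilization, that the pair of curves used really does bound disks on both sides and is disjoint from the third family, which must be checked carefully in the pictures (hence the use of many colors for the $\gamma$ curves, as emphasized in the caption of Figure \ref{fig:DDn}). Once these verifications are made, the conclusion that the diagram is standard follows directly from the definition.
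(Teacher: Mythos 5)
Your overall strategy --- reduce the genus $9$ diagram to genus $0$ by nine destabilizations, each enabled by handle slides, exploiting the pairing of arc-curves with their mirrors in the double --- is the same strategy the paper follows. But what you have written is a plan, not a proof. The entire mathematical content of this theorem is the explicit verification that a suitable sequence of slides, Dehn twists, and destabilizing triples actually exists in this particular diagram: that at each stage one can exhibit an $\alpha$ (or $\beta$, $\gamma$) curve meeting a curve of one other family geometrically once and disjoint from the third family, after concretely specified slides. Every one of your key steps is phrased in the conditional ("should provide", "should absorb", "should collapse", "new cancelling pairs become visible"), and none is carried out or checked. Since the statement is precisely that this computation succeeds, the proposal as written establishes nothing beyond what is already plausible from the general theory.

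There is also a specific point where your plan diverges from what actually happens and is likely not salvageable as stated: you propose to kill the $n$-dependence at the very first step by arguing that the $t^n$ twists on the two mirror halves cancel each other up to handle slides, so that the rest of the argument is $n$-independent. The paper does not do this, and for good reason: the twisted curves on the two halves are in general not parallel on the doubled surface until a substantial amount of simplification has been done, so there is no a priori cancellation. In the paper's proof the $n$-dependent twists are manipulated piecemeal --- a $t_c^{-n}$ is introduced at the second destabilization, a $t_{c'}^{n}$ at the third cancels the twist only on the $\beta$ curves while still acting on an $\alpha$ curve, and the residual twist on that $\alpha$ curve is only cancelled at the eighth destabilization. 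You would need to either justify your early cancellation claim by exhibiting the slides that make the two twisting curves parallel (and checking that this does not disrupt the rest of the diagram), or follow the diagram through with the $n$-dependence intact, which is what the paper's figures do.
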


\begin{proof}
We show the statement by destabilizing the trisection diagram continuously. Destabilizations in the proof are based on \cite[Lemma 8]{MR4480889}. See Figure \ref{fig:handleslide} on handle slides in the proof.

%%%%%%%%%%%%%%%%%%%%
\subsection*{The first destabilization}
By performing the handle slides depicted in Figure \ref{fig:start} in gray, Figure \ref{fig:before1} is obtained. Then, by destabilizing Figure \ref{fig:before1}, we have Figure \ref{fig:after1}.

\begin{figure}[h]
\begin{center}
\includegraphics[width=13cm, height=20cm, scale=1]{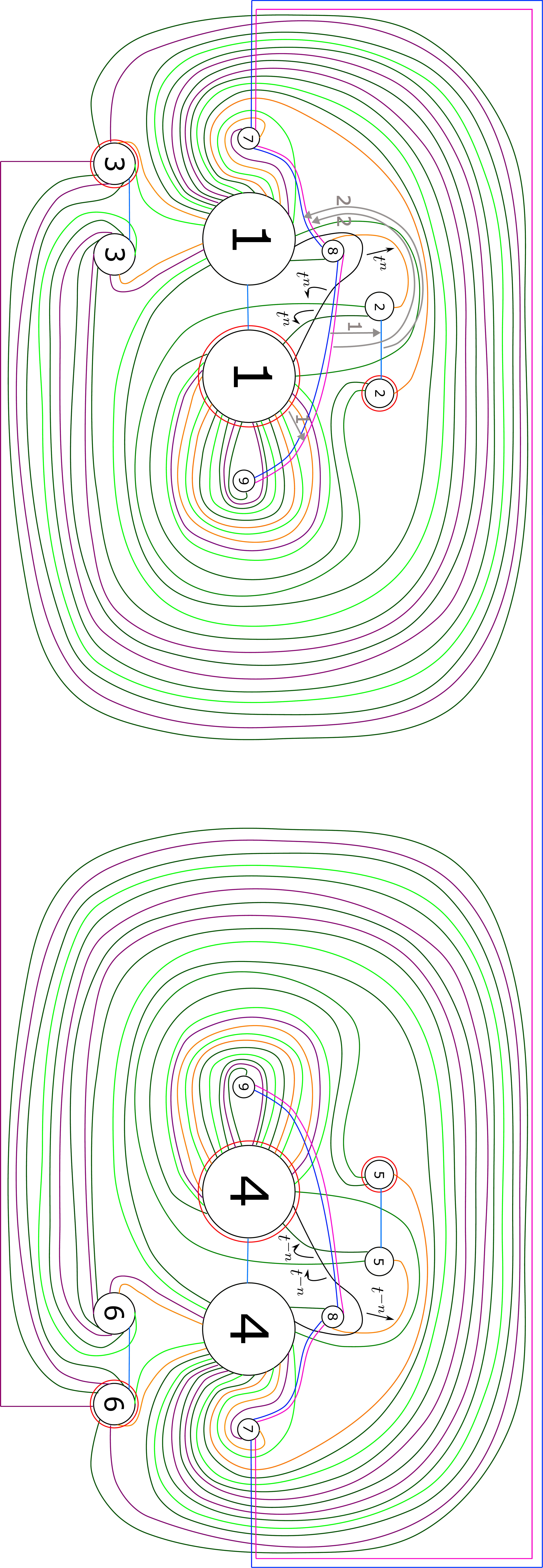}
\end{center}
\setlength{\captionmargin}{50pt}
\caption{The starting diagram in the proof.}
\label{fig:start}
\end{figure}

\begin{figure}[h]
\begin{center}
\includegraphics[width=13cm, height=20cm, scale=1]{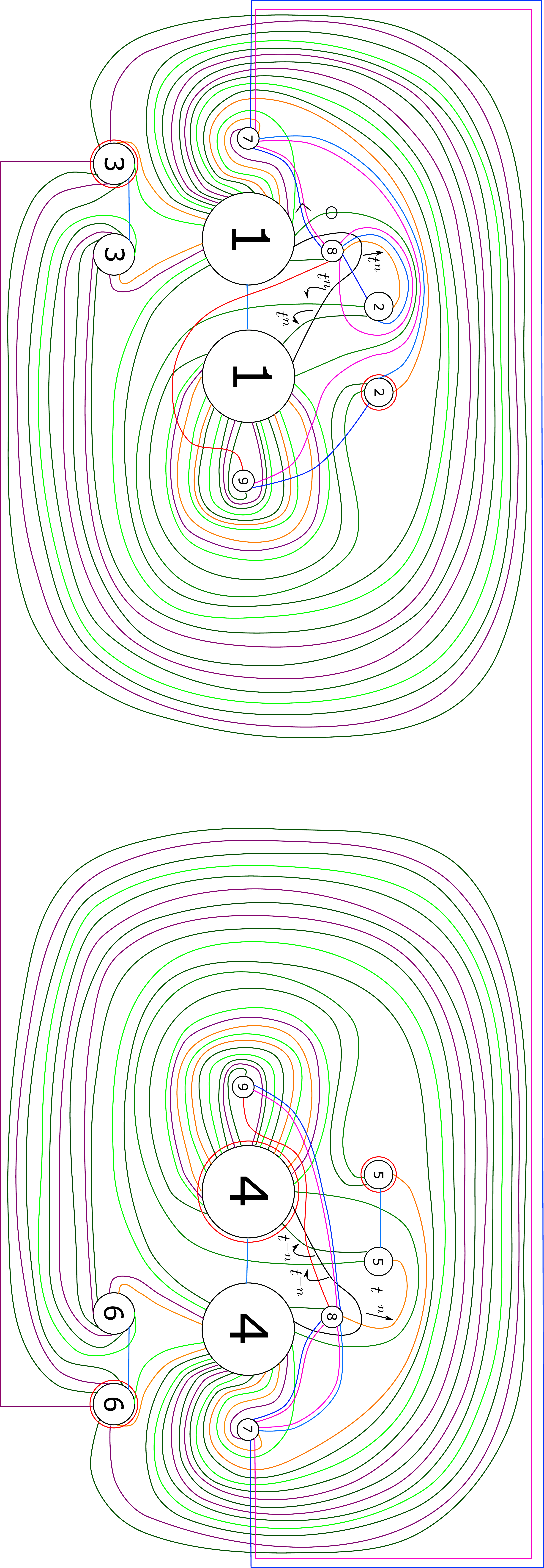}
\end{center}
\setlength{\captionmargin}{50pt}
\caption{The trisection diagram obtained from Figure \ref{fig:start} by some handle slides (before the first destabilization). Destabilize this diagram for three curves with the arrow and circle.}
\label{fig:before1}
\end{figure}

\begin{figure}[h]
\begin{center}
\includegraphics[width=13cm, height=20cm, scale=1]{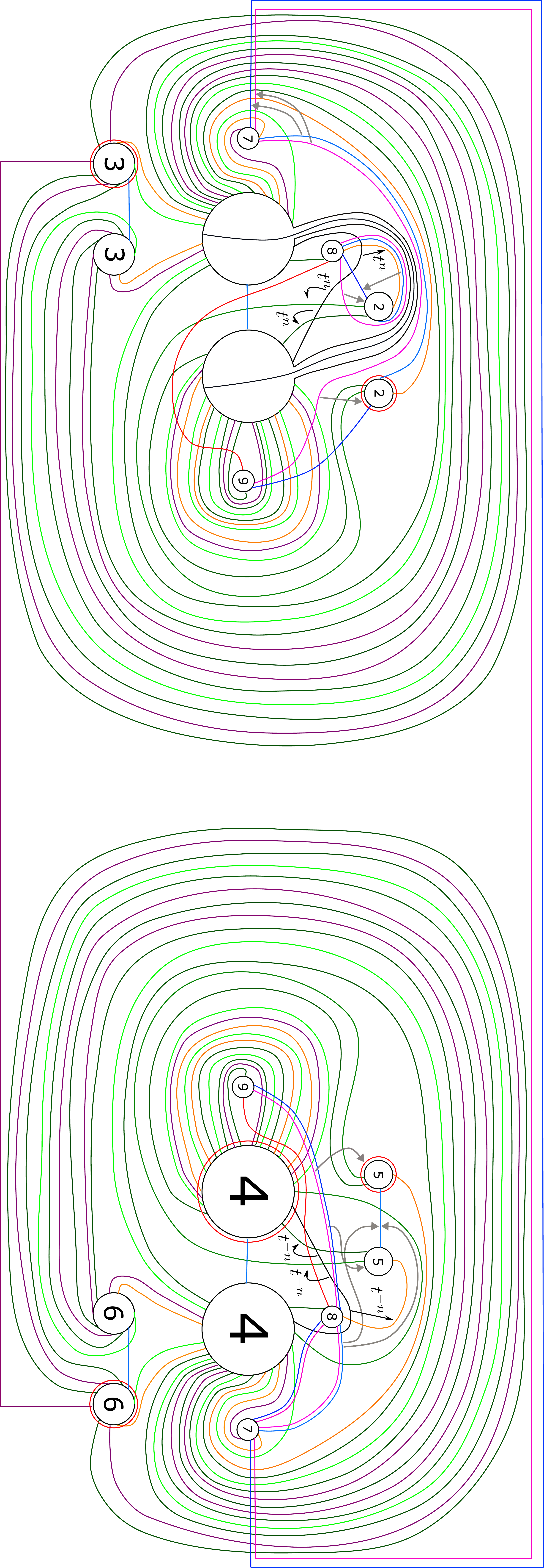}
\end{center}
\setlength{\captionmargin}{50pt}
\caption{After the first destabilization.}
\label{fig:after1}
\end{figure}

%%%%%%%%%%%%%%%%%%%%%%
\subsection*{The second destabilization}
By performing the handle slides depicted in Figure \ref{fig:after1} in gray, Figure \ref{fig:before2_1} is obtained (see Figure \ref{fig:identify} for the notation). Then, perform $t^{-n}_c$, where $c$ is the black meridian curve in the disk labeled 2. Moreover, by performing the handle slides depicted in Figure \ref{fig:before2_1} in gray, Figure \ref{fig:before2} is obtained. Then, by destabilizing Figure \ref{fig:before2}, we have Figure \ref{fig:after2}.

\begin{figure}[h]
\begin{center}
\includegraphics[width=13cm, height=20cm, scale=1]{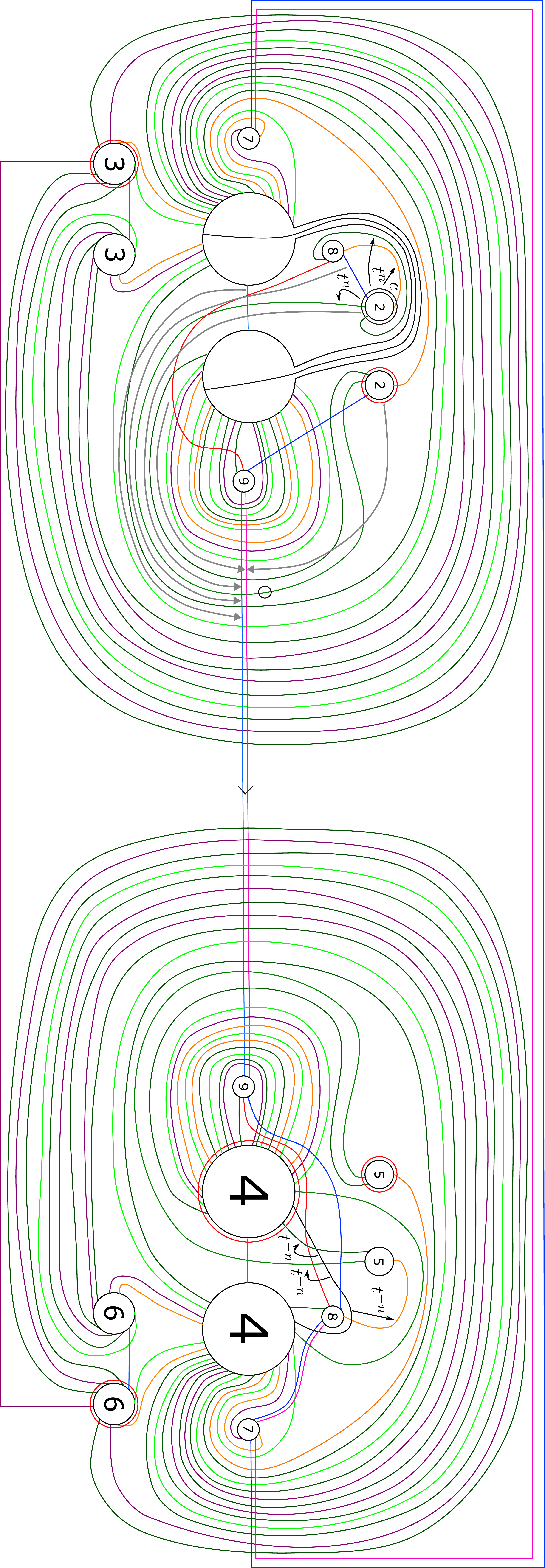}
\end{center}
\setlength{\captionmargin}{50pt}
\caption{The diagram obtained from Figure \ref{fig:after1} by some handle slides. This diagram can be destabilized for three curves with the arrow and circle.}
\label{fig:before2_1}
\end{figure}

\begin{figure}[h]
\begin{center}
\includegraphics[width=13cm, height=20cm, scale=1]{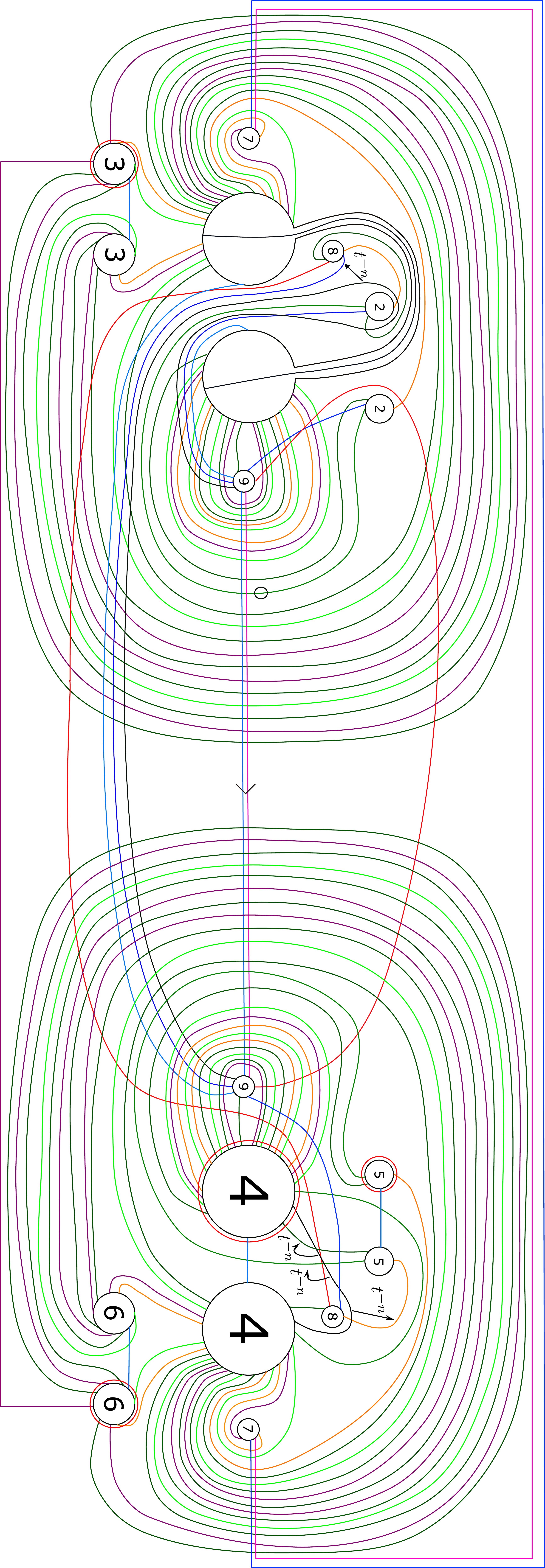}
\end{center}
\setlength{\captionmargin}{50pt}
\caption{The diagram obtained from Figure \ref{fig:before2_1} by some Dehn twist and handle slides (before the second destabilization).}
\label{fig:before2}
\end{figure}

\begin{figure}[h]
\begin{center}
\includegraphics[width=13cm, height=20cm, scale=1]{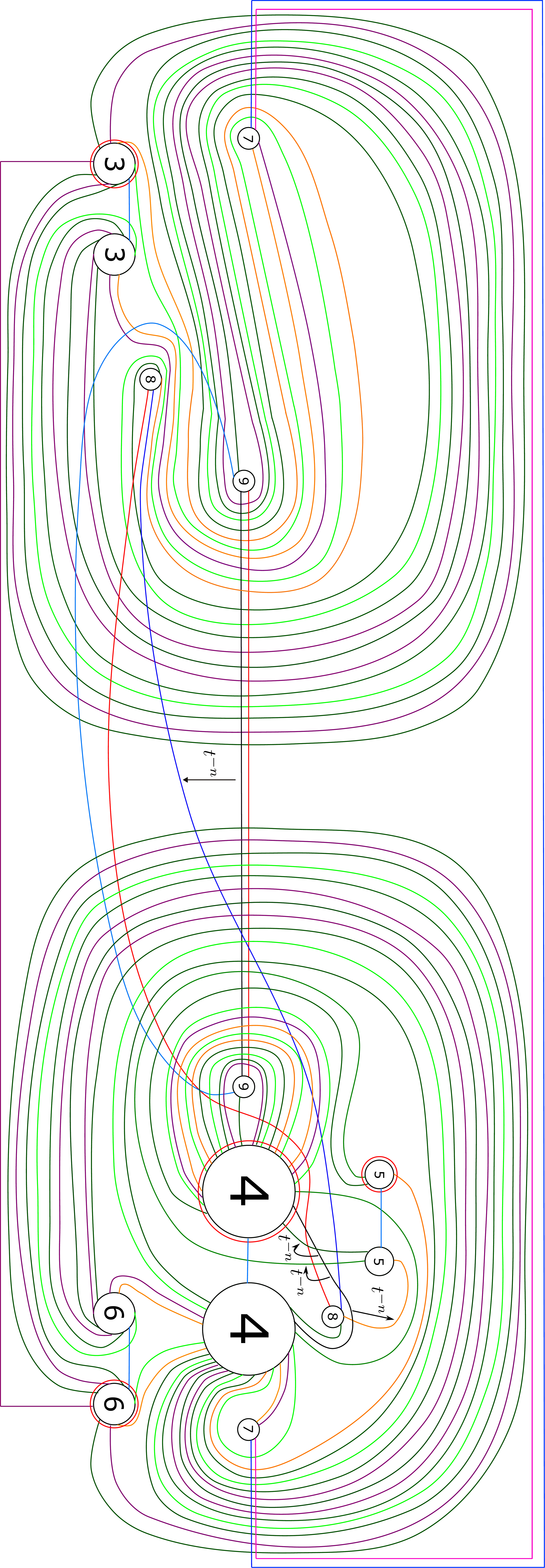}
\end{center}
\setlength{\captionmargin}{50pt}
\caption{After the second destabilization.}
\label{fig:after2}
\end{figure}

%%%%%%%%%%%%%%%%%%%%
\subsection*{The third destabilization}
In Figure \ref{fig:after2}, drag the disk labeled 7. Then, Figure \ref{fig:before3_1} is obtained. In Figure \ref{fig:before3_1}, perform the handle slides indicated in gray and $t_{c^{'}}^{n}$. After that, by handle sliding $c^{'}$ over the $\beta$ curve on the disk labeled 4, $c$ and $c^{'}$ are parallel on the disk labeled 8. Since they act on the same $\beta$ curve on the opposite sign, the Dehn twists can be cancelled for the $\beta$ curves. Thus, we have Figure \ref{fig:before3_2} ($t_{c^{'}}^n$ acts only the $\alpha$ curve). In Figure \ref{fig:before3_2}, perform the handle slides indicated in gray, and then $t_{c_1}$ and handle slide $\beta$ curves on the $\beta$ curve on the disk labeled 3. After that, handle slide $\alpha$ curves on the $\alpha$ curve on the disk labeled 6 as mentioned above, and then perform $t_{c_2}^{-1}$. Then, Figure \ref{fig:before3_3} is obtained. In Figure \ref{fig:before3_3}, by performing the handle slides indicated in gray in order, we have Figure \ref{fig:before3_4}. By destabilizing Figure \ref{fig:before3_4} along the three curves labeled the arrow and circle, Figure \ref{fig:after3} is obtained.

\begin{figure}[h]
\begin{center}
\includegraphics[width=13cm, height=20cm, scale=1]{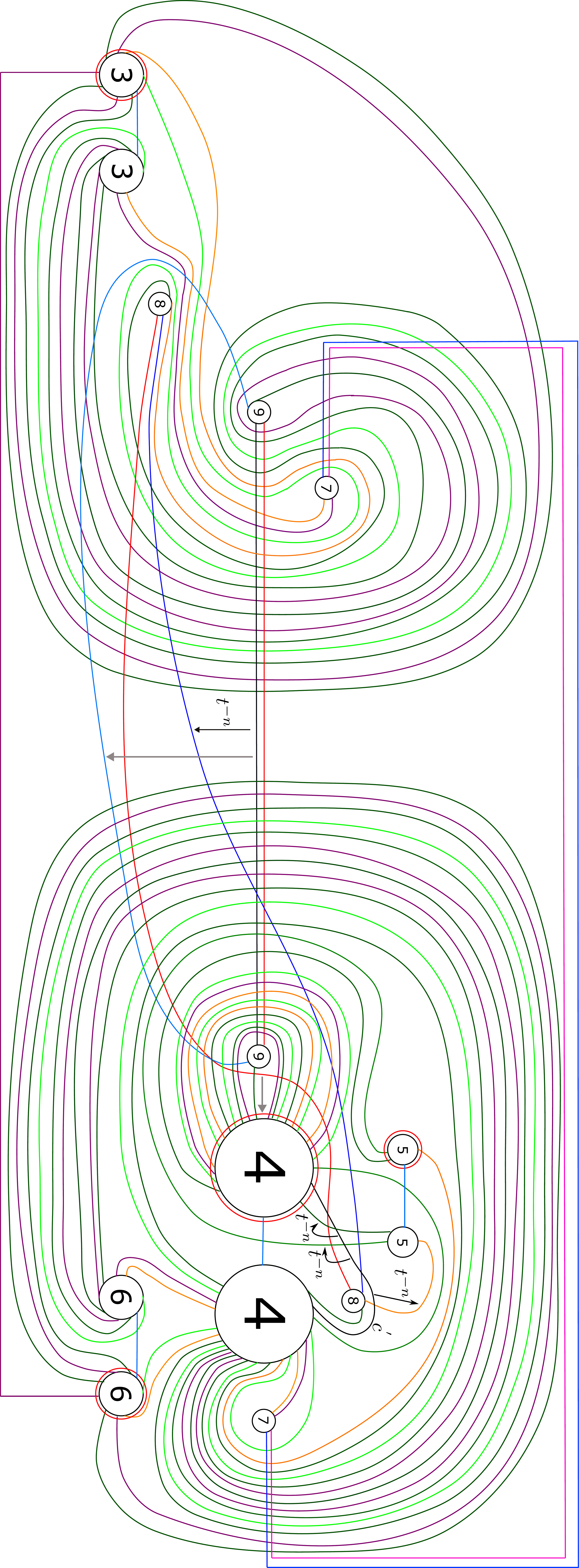}
\end{center}
\setlength{\captionmargin}{50pt}
\caption{Drag the left disk labeled 7.}
\label{fig:before3_1}
\end{figure}

\begin{figure}[h]
\begin{center}
\includegraphics[width=13cm, height=20cm, scale=1]{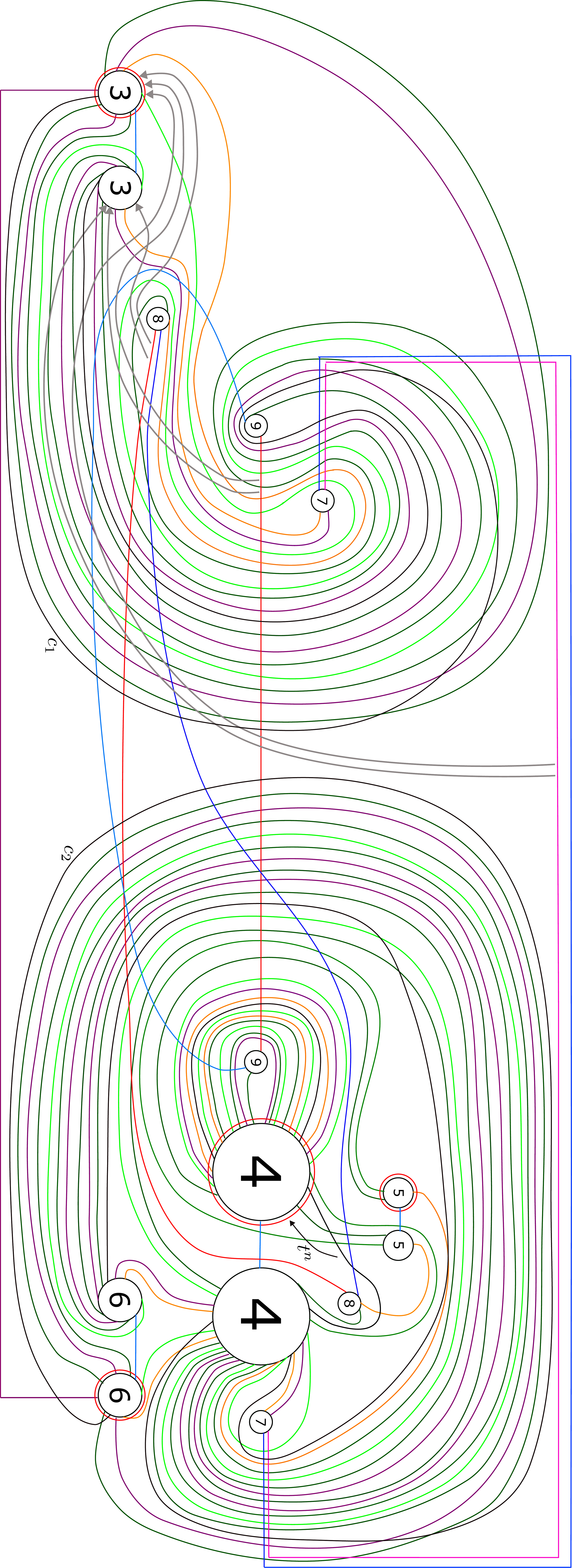}
\end{center}
\setlength{\captionmargin}{50pt}
\caption{The diagram obtained from Figure \ref{fig:before3_1} by some Dehn twists and handle slides.}
\label{fig:before3_2}
\end{figure}

\begin{figure}[h]
\begin{center}
\includegraphics[width=13cm, height=20cm, scale=1]{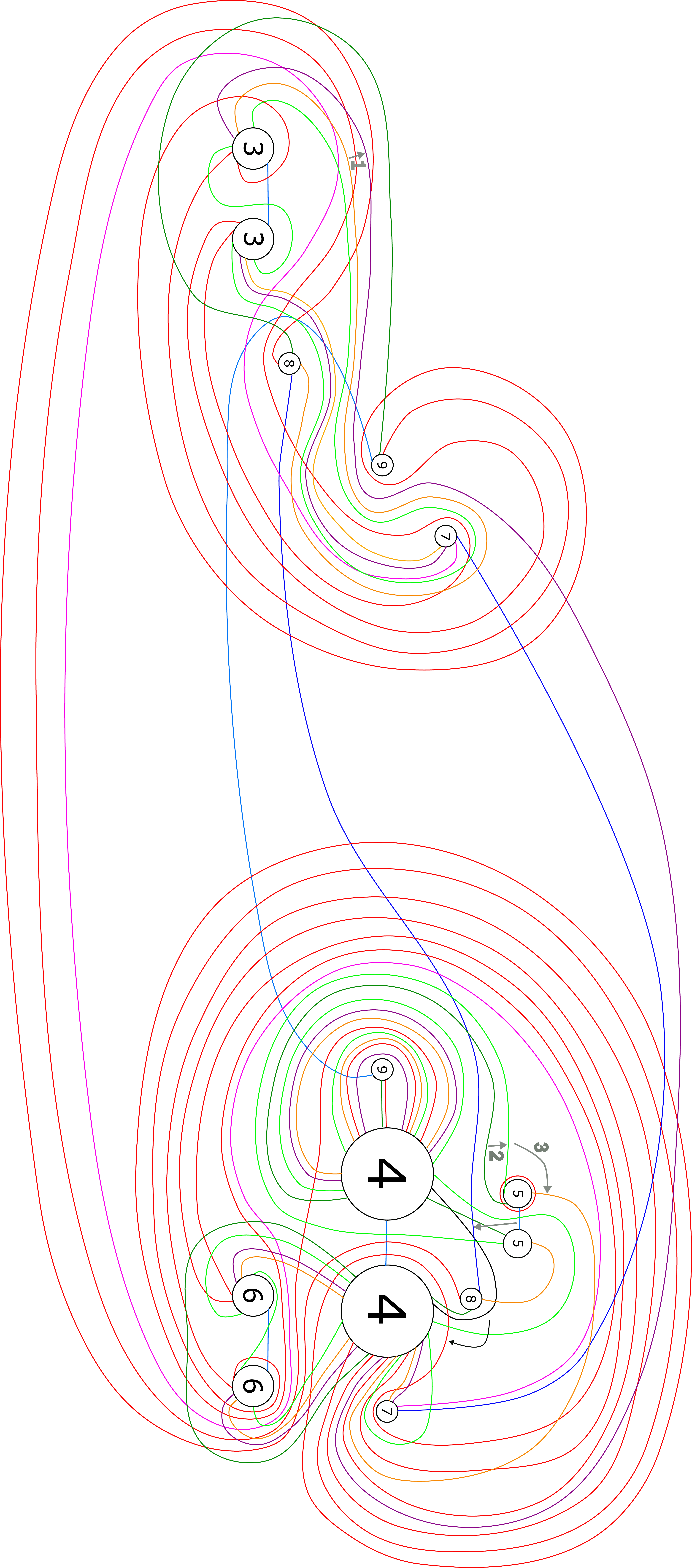}
\end{center}
\setlength{\captionmargin}{50pt}
\caption{The diagram obtained from Figure \ref{fig:before3_2} by some Dehn twists and handle slides.}
\label{fig:before3_3}
\end{figure}

\begin{figure}[h]
\begin{center}
\includegraphics[width=13cm, height=20cm, scale=1]{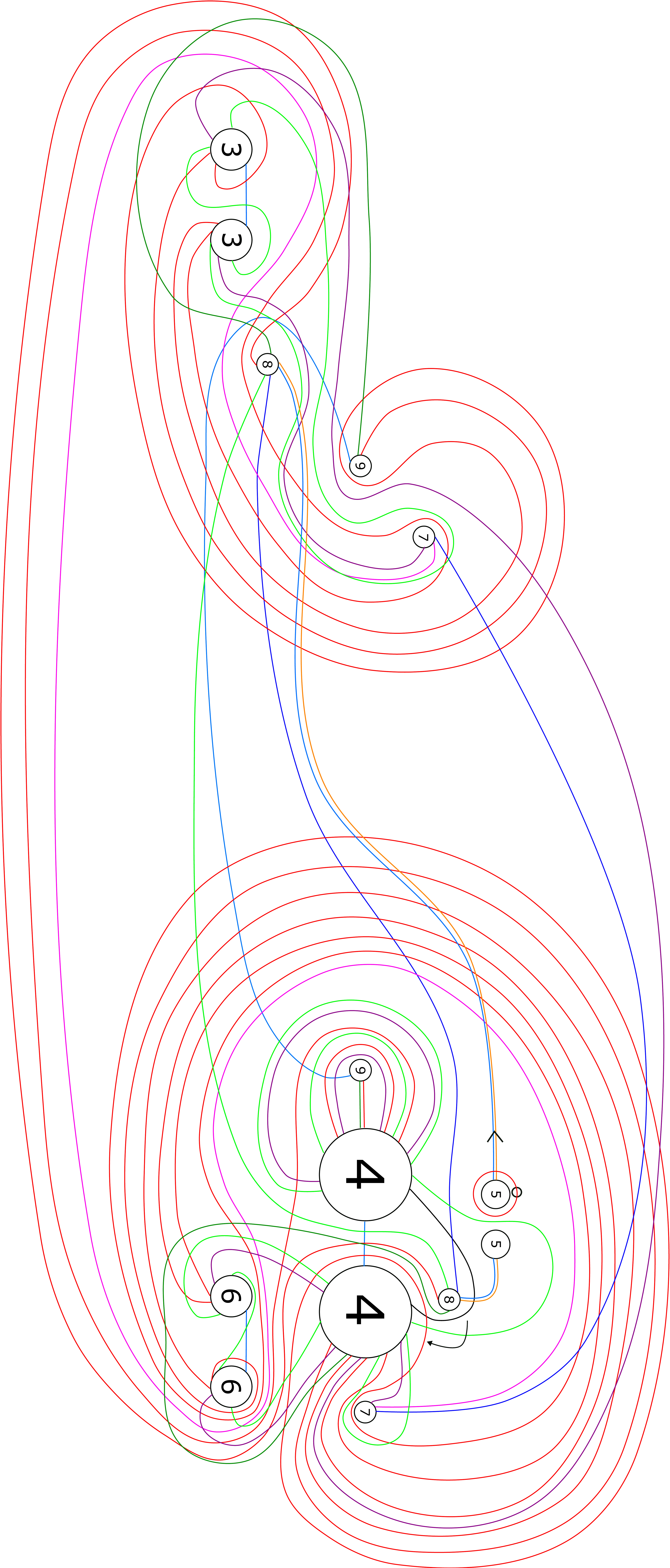}
\end{center}
\setlength{\captionmargin}{50pt}
\caption{The diagram obtained from Figure \ref{fig:before3_3} by some handle slides. (before the third destabilization)}
\label{fig:before3_4}
\end{figure}

\begin{figure}[h]
\begin{center}
\includegraphics[width=13cm, height=20cm, scale=1]{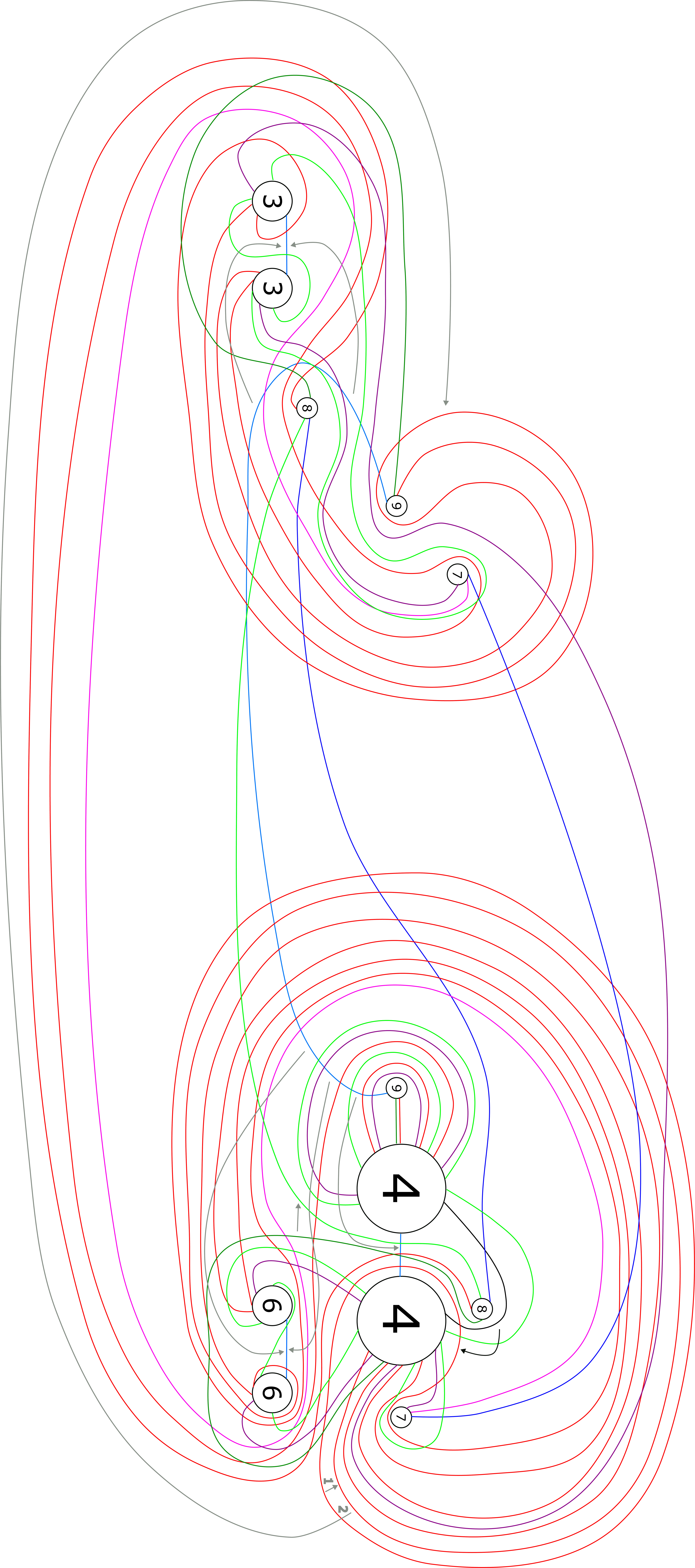}
\end{center}
\setlength{\captionmargin}{50pt}
\caption{After the third destabilization.}
\label{fig:after3}
\end{figure}

%%%%%%%%%%%%%%%%%%%%%
\subsection*{The fourth destabilization}
In Figure \ref{fig:after3}, by performing the handle slides indicated in gray in order, Figure \ref{fig:before4} is obtained. By destabilizing Figure \ref{fig:before4} along the three curves labeled the arrow and circle, Figure \ref{fig:after4} is obtained.

\begin{figure}[h]
\begin{center}
\includegraphics[width=13cm, height=20cm, scale=1]{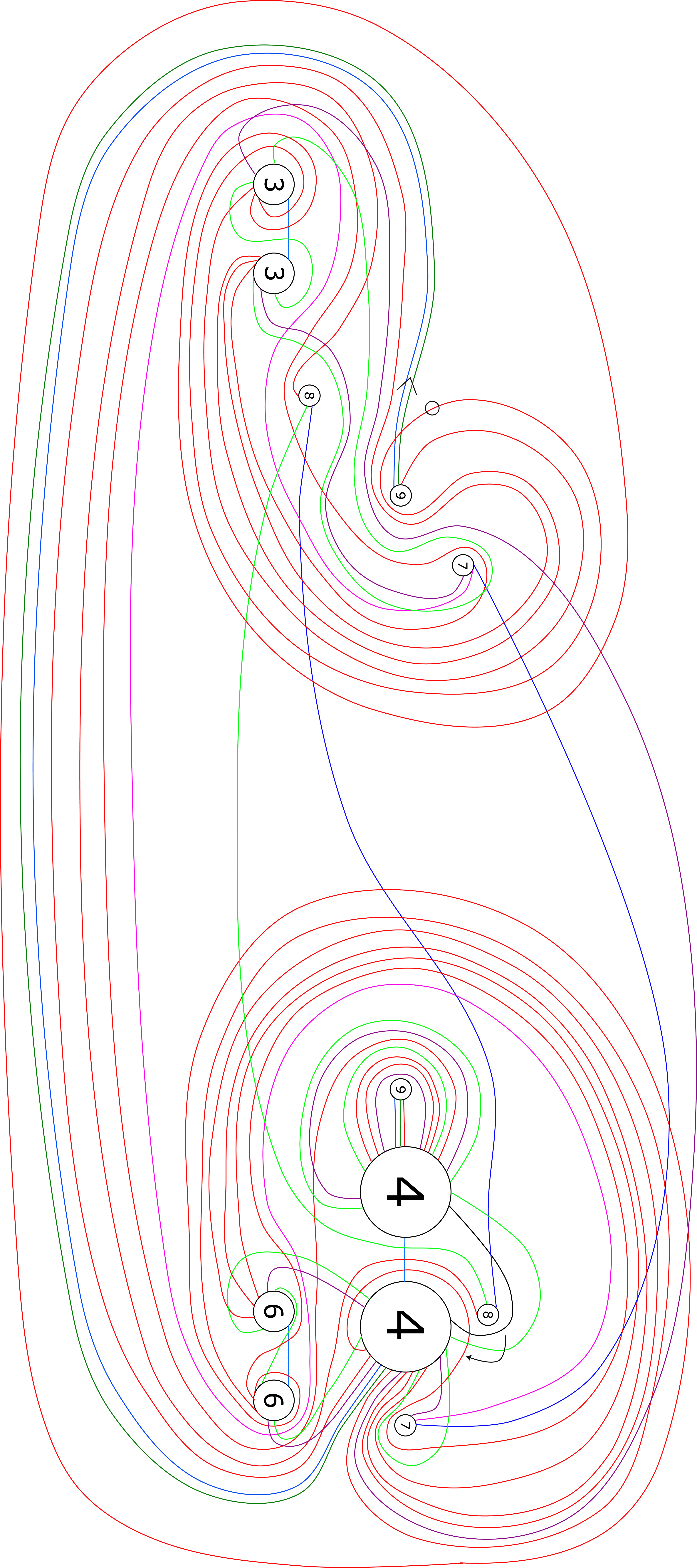}
\end{center}
\setlength{\captionmargin}{50pt}
\caption{The diagram obtained from Figure \ref{fig:after3} by some handle slides. (before the fourth destabilization)}
\label{fig:before4}
\end{figure}

\begin{figure}[h]
\begin{center}
\includegraphics[width=13cm, height=20cm, scale=1]{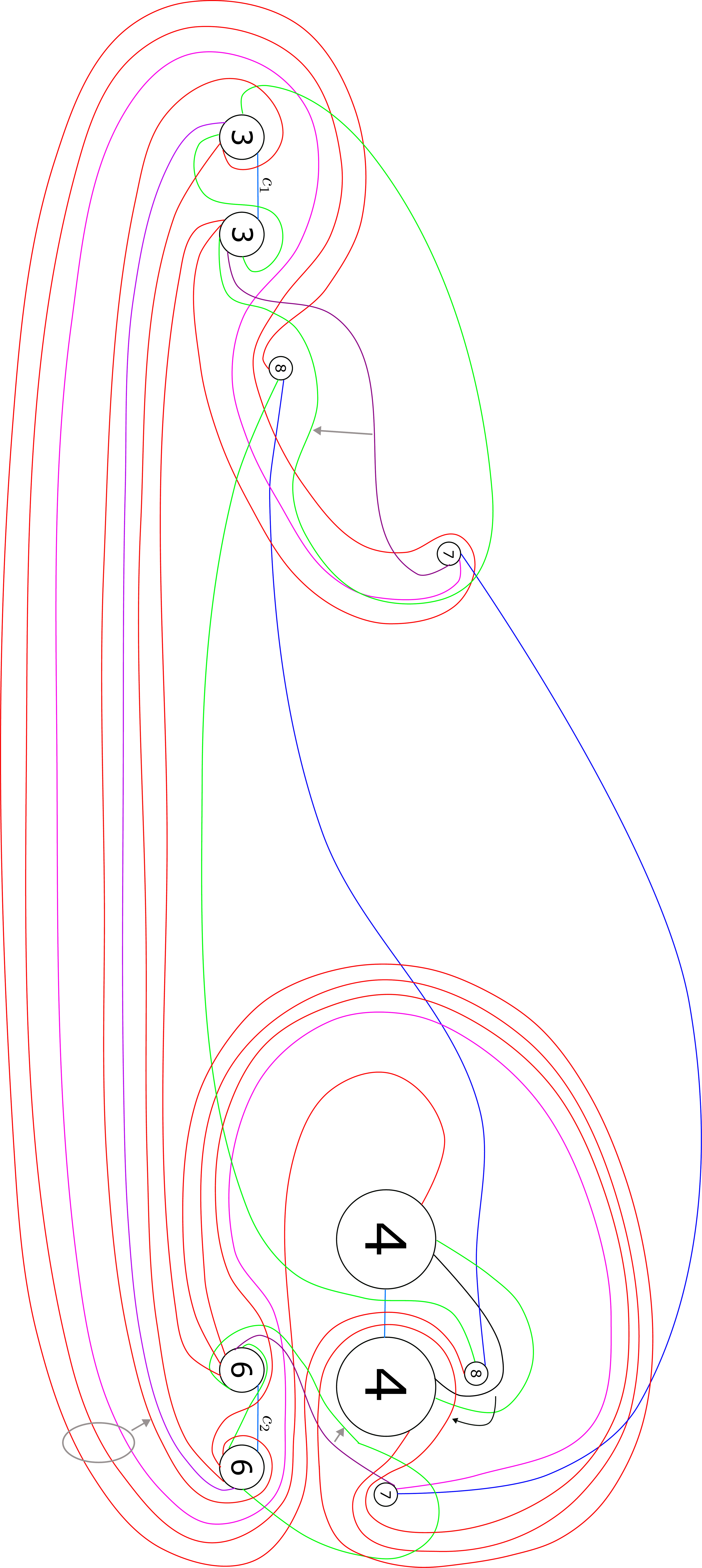}
\end{center}
\setlength{\captionmargin}{50pt}
\caption{After the fourth destabilization.}
\label{fig:after4}
\end{figure}

%%%%%%%%%%%%%%%%%%%%
\subsection*{The fifth destabilization}
In Figure \ref{fig:after4}, perform $t_{c_2} \circ t_{c_1}^{-1}$, and then perform the handle slides indicated in gray. Then, Figure \ref{fig:before5_1} is obtained. In Figure \ref{fig:before5_1}, perform the handle slides indicated in gray, and then $t_c$. Then, Figure \ref{fig:before5_2} is obtained. In Figure \ref{fig:before5_2}, by performing the handle slides indicated in gray, we have Figure \ref{fig:before5_3}. In Figure \ref{fig:before5_3}, by performing the handle slides indicated in gray, we have Figure \ref{fig:before5_4}. By destabilizing Figure \ref{fig:before5_4} along the three curves labeled the arrow and circle, Figure \ref{fig:after5} is obtained.

\begin{figure}[h]
\begin{center}
\includegraphics[width=13cm, height=20cm, scale=1]{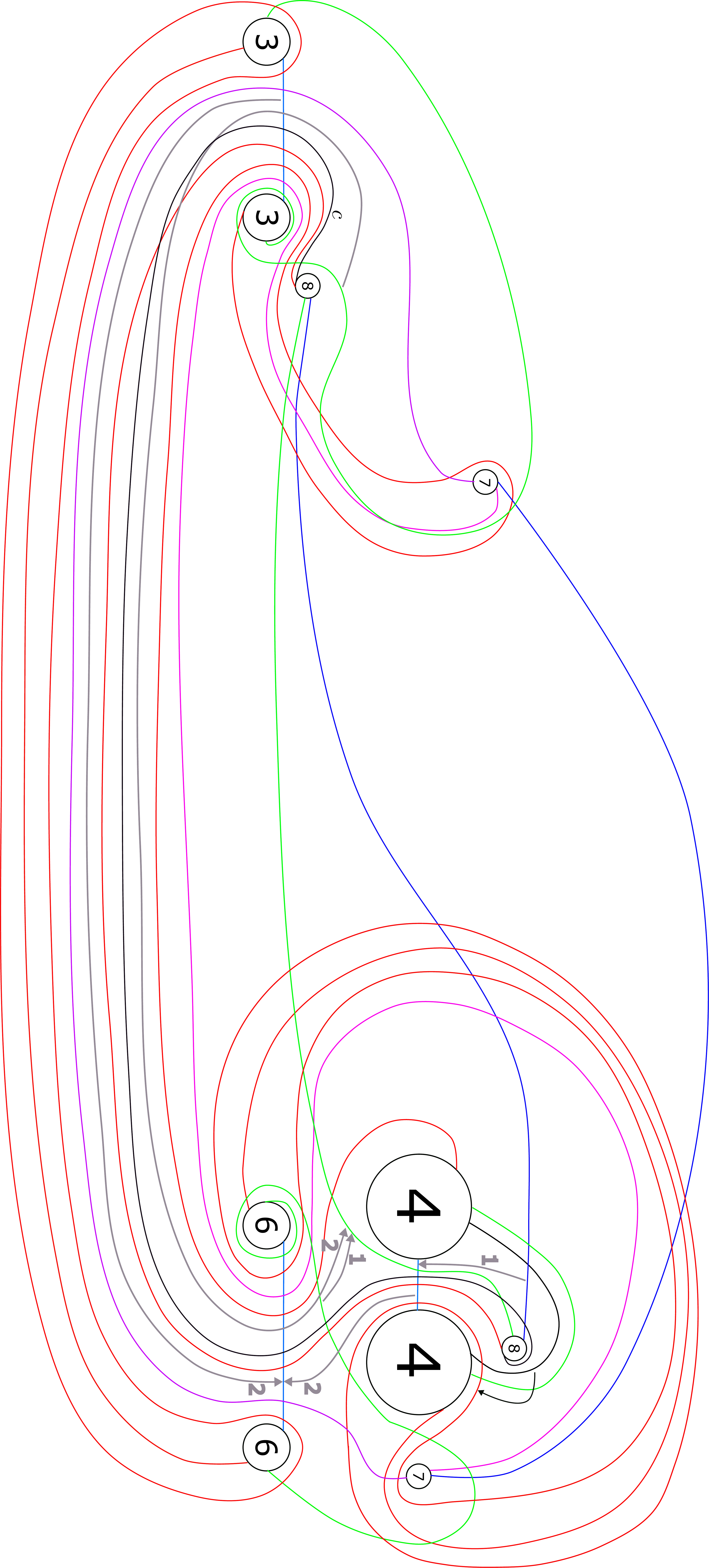}
\end{center}
\setlength{\captionmargin}{50pt}
\caption{The diagram obtained from Figure \ref{fig:after4} by some Dehn twists and handle slides.}
\label{fig:before5_1}
\end{figure}

\begin{figure}[h]
\begin{center}
\includegraphics[width=13cm, height=20cm, scale=1]{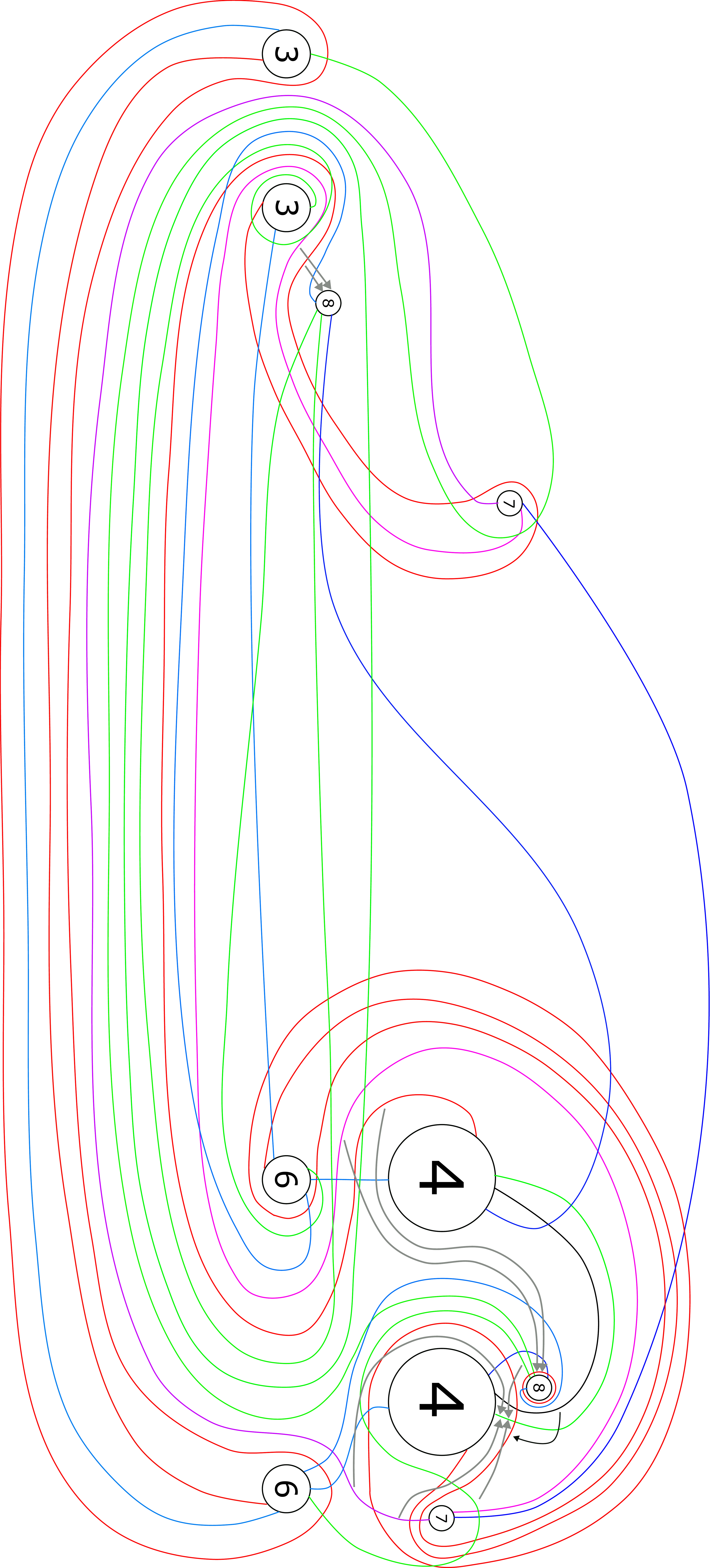}
\end{center}
\setlength{\captionmargin}{50pt}
\caption{The diagram obtained from Figure \ref{fig:before5_1} by some Dehn twists and handle slides.}
\label{fig:before5_2}
\end{figure}

\begin{figure}[h]
\begin{center}
\includegraphics[width=13cm, height=20cm, scale=1]{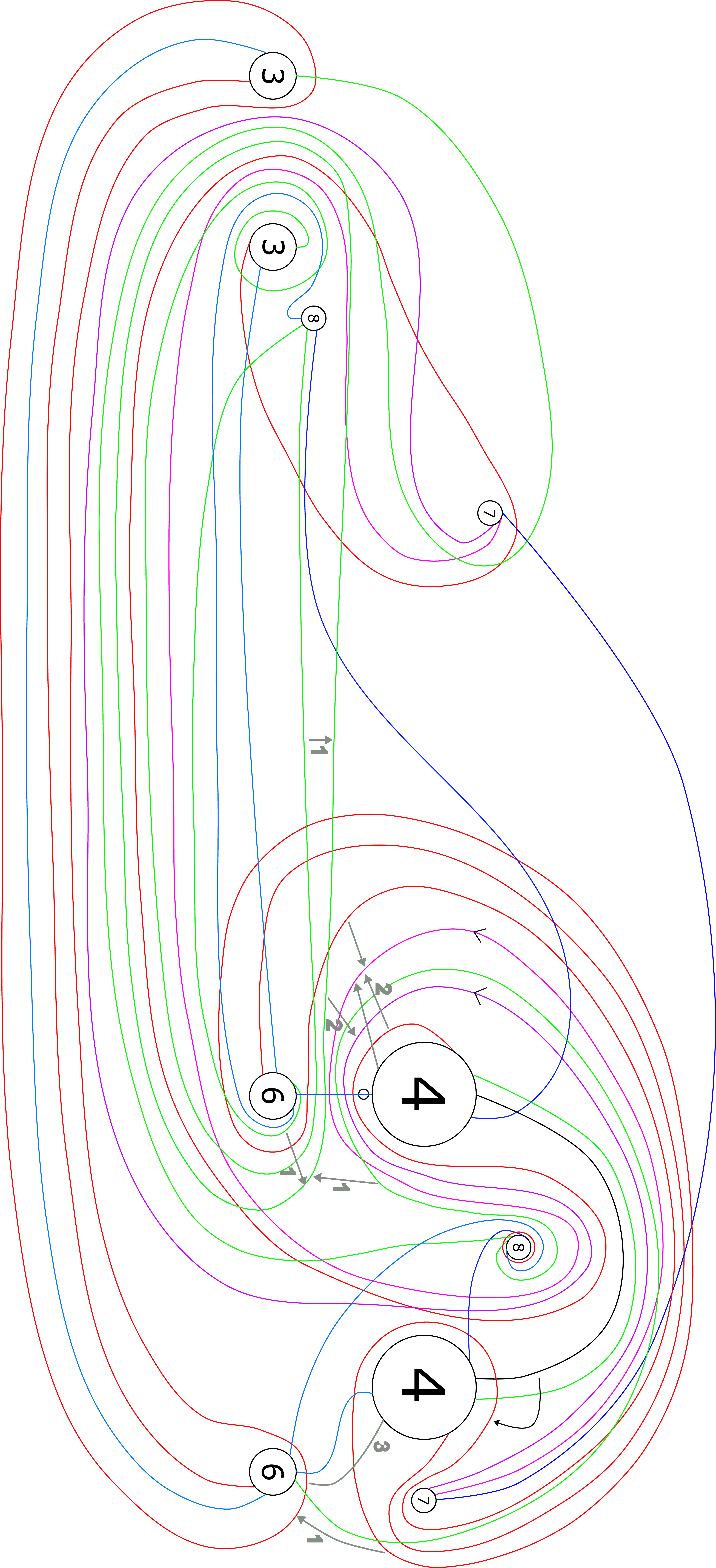}
\end{center}
\setlength{\captionmargin}{50pt}
\caption{The diagram obtained from Figure \ref{fig:before5_2} by some handle slides.}
\label{fig:before5_3}
\end{figure}

\begin{figure}[h]
\begin{center}
\includegraphics[width=13cm, height=20cm, scale=1]{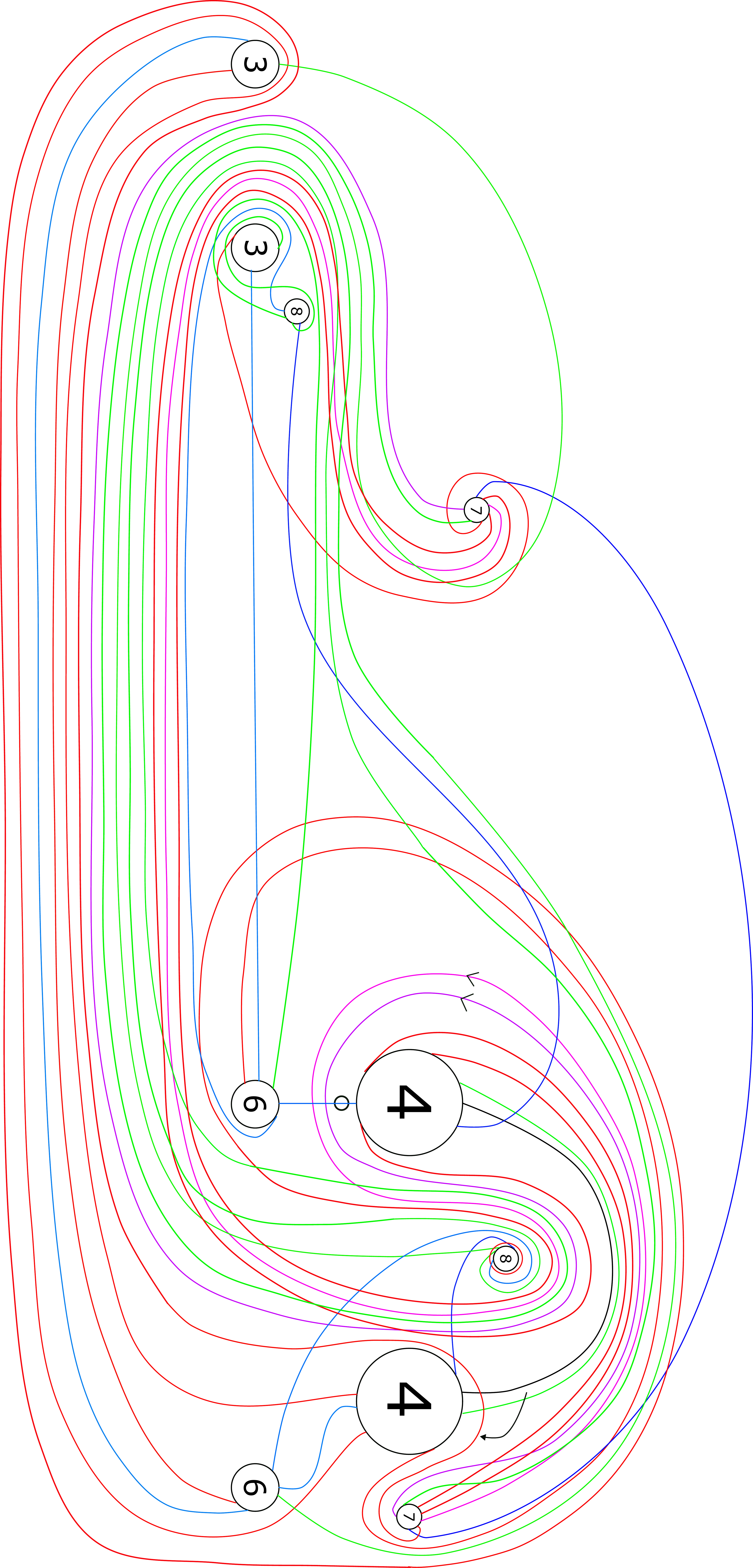}
\end{center}
\setlength{\captionmargin}{50pt}
\caption{The diagram obtained from Figure \ref{fig:before5_3} by some handle slides. (before the fifth destabilization)}
\label{fig:before5_4}
\end{figure}

\begin{figure}[h]
\begin{center}
\includegraphics[width=13cm, height=20cm, scale=1]{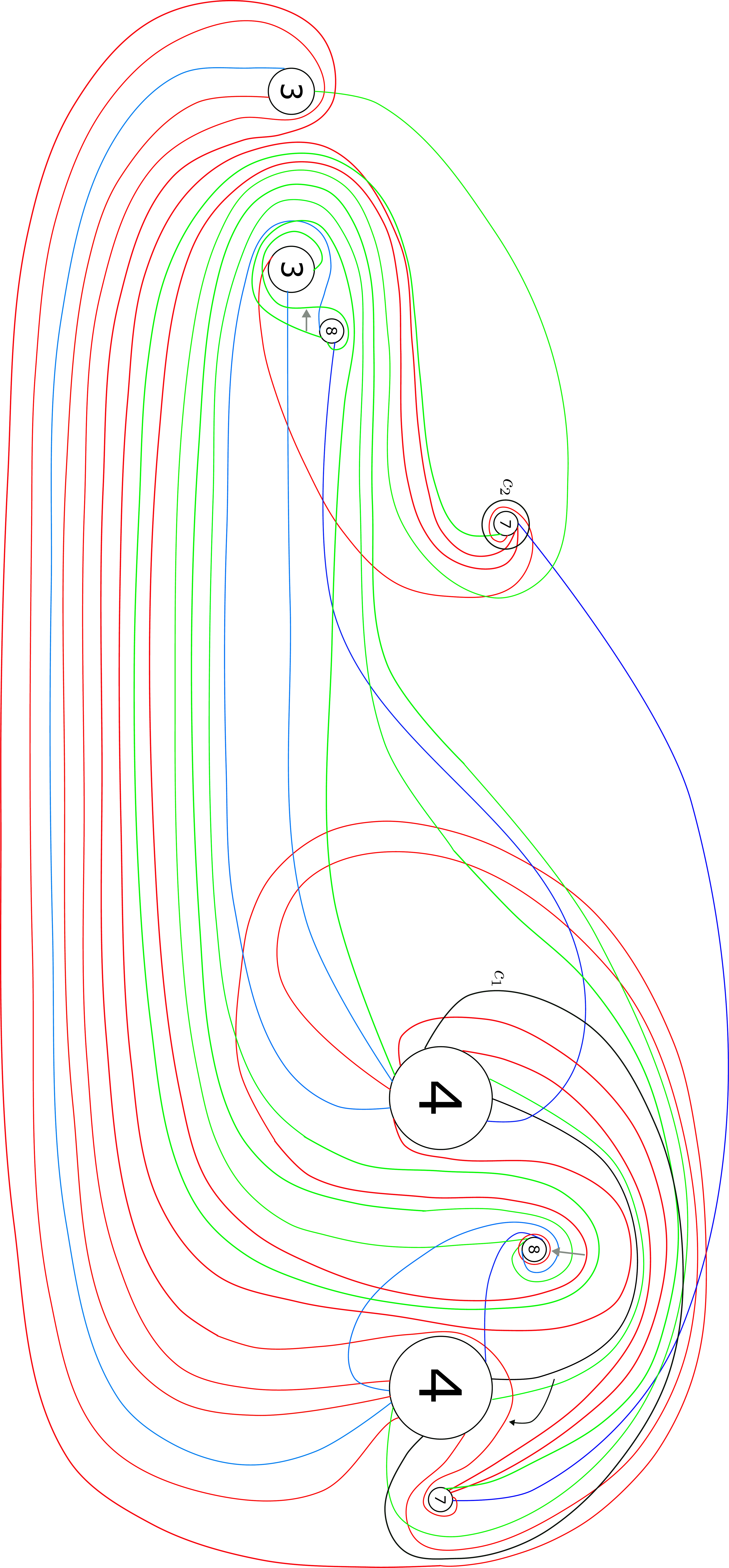}
\end{center}
\setlength{\captionmargin}{50pt}
\caption{After the fifth destabilization.}
\label{fig:after5}
\end{figure}

%%%%%%%%%%%%%%%%%%%%
\subsection*{The sixth destabilization}%一つ目はc_2が7のメリ．二つ目はc_1:3,8,8, c_2:3,8, c_3:3,8,8,7, c_4:むずいやつ．
In Figure \ref{fig:after5}, perform the handle slides indicated in gray, and then $t_{c_2}^{-1} \circ t_{c_1}$. Then, Figure \ref{fig:before6_1} is obtained. In Figure \ref{fig:before6_1}, perform the handle slides indicated in gray, and then $t_{c_4}^{-1} \circ t_{c_3} \circ t_{c_2} \circ t_{c_1}^{-1}$. Then, Figure \ref{fig:before6_2} is obtained. In Figure \ref{fig:before6_2}, perform the handle slides indicated in gray, and then $t_c$. After that, by handle sliding $\alpha$ and $\gamma$ curves on the $\alpha$ curve on the disk labeled 8 and the $\gamma$ curve on the disk labeled 4, respectively, we have Figure \ref{fig:before6_3}. In Figure \ref{fig:before6_3}, by performing the handle slides indicated in gray, Figure \ref{fig:before6_4} is obtained. By destabilizing Figure \ref{fig:before6_4} along the three curves labeled the arrow and circle, we have Figure \ref{fig:after6}.

\begin{figure}[h]
\begin{center}
\includegraphics[width=13cm, height=20cm, scale=1]{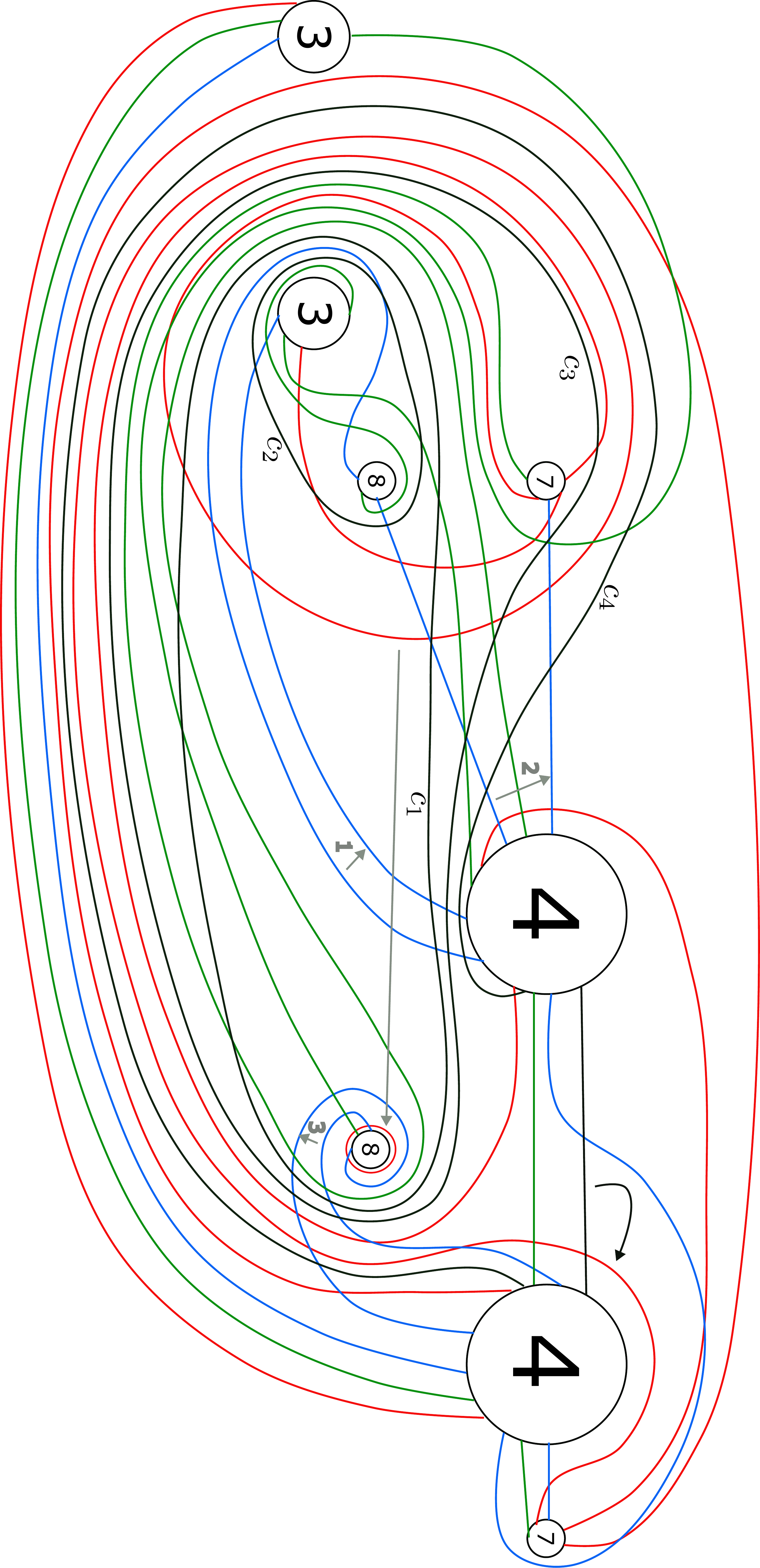}
\end{center}
\setlength{\captionmargin}{50pt}
\caption{The diagram obtained from Figure \ref{fig:after5} by some Dehn twists and handle slides.}
\label{fig:before6_1}
\end{figure}

\begin{figure}[h]
\begin{center}
\includegraphics[width=13cm, height=20cm, scale=1]{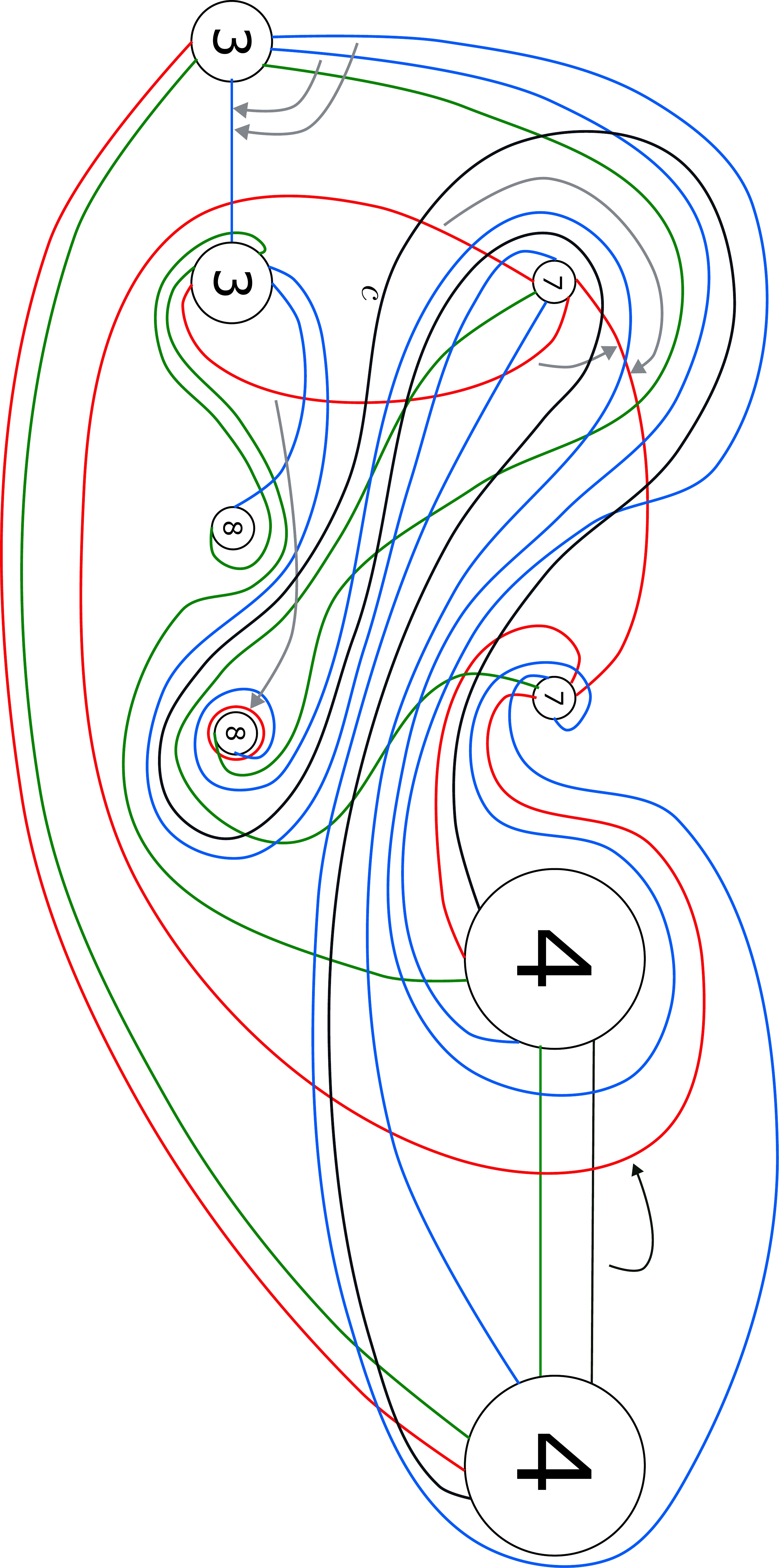}
\end{center}
\setlength{\captionmargin}{50pt}
\caption{The diagram obtained from Figure \ref{fig:before6_1} by some Dehn twists and handle slides.}
\label{fig:before6_2}
\end{figure}

\begin{figure}[h]
\begin{center}
\includegraphics[width=13cm, height=20cm, keepaspectratio, scale=1]{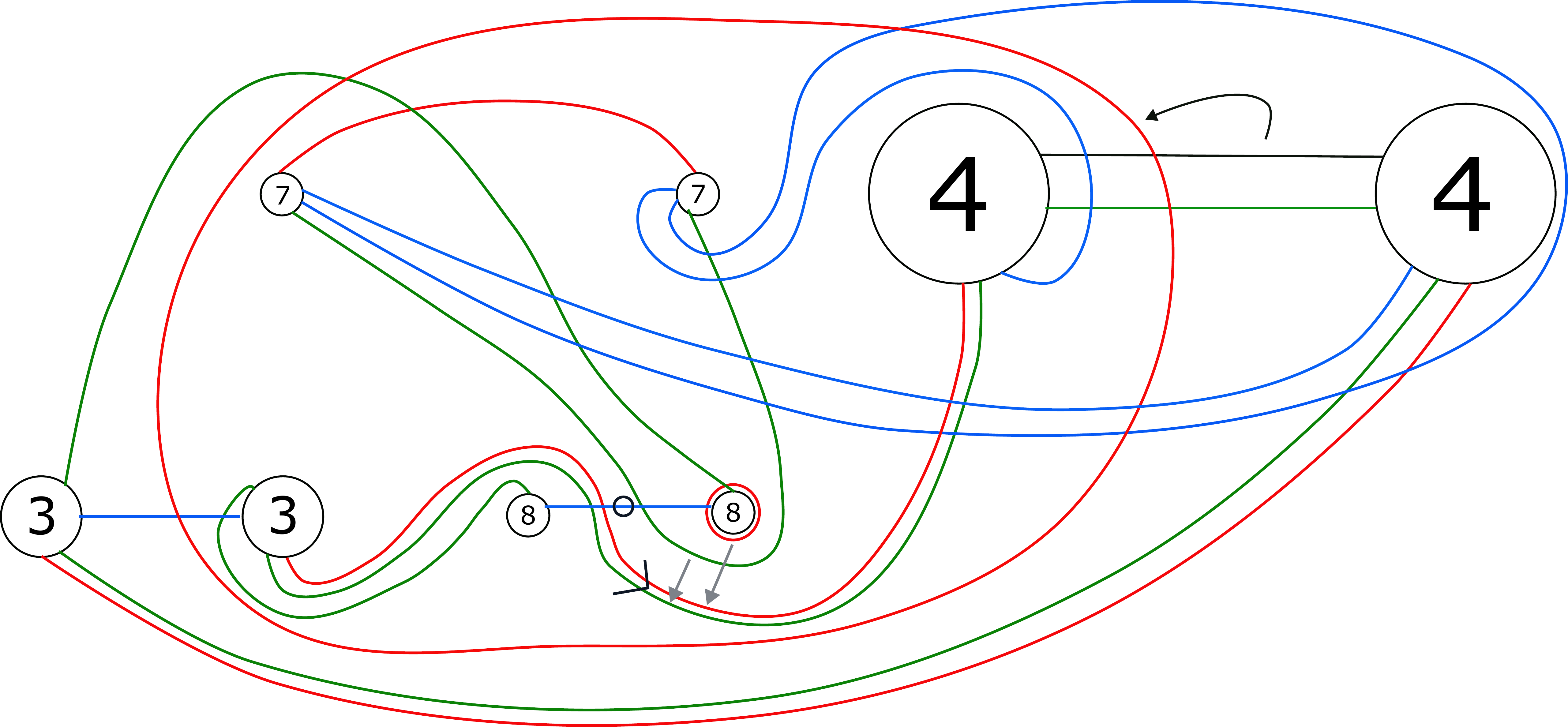}
\end{center}
\setlength{\captionmargin}{50pt}
\caption{The diagram obtained from Figure \ref{fig:before6_2} by some Dehn twists and handle slides.}
\label{fig:before6_3}
\end{figure}

\begin{figure}[h]
\begin{center}
\includegraphics[width=13cm, height=20cm, keepaspectratio, scale=1]{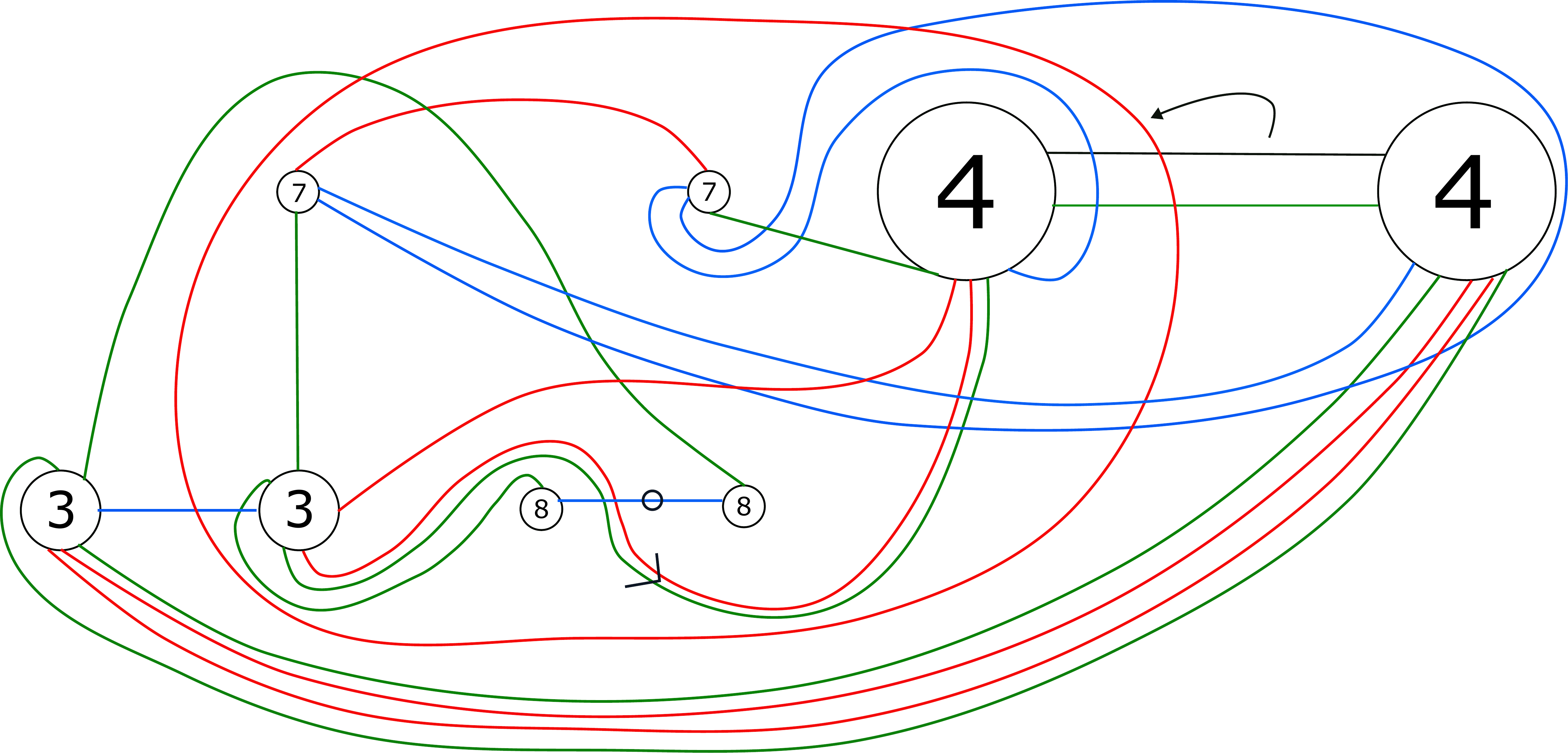}
\end{center}
\setlength{\captionmargin}{50pt}
\caption{The diagram obtained from Figure \ref{fig:before6_3} by some handle slides. (before the sixth destabilization)}
\label{fig:before6_4}
\end{figure}

\begin{figure}[h]
\begin{center}
\includegraphics[width=13cm, height=20cm, keepaspectratio, scale=1]{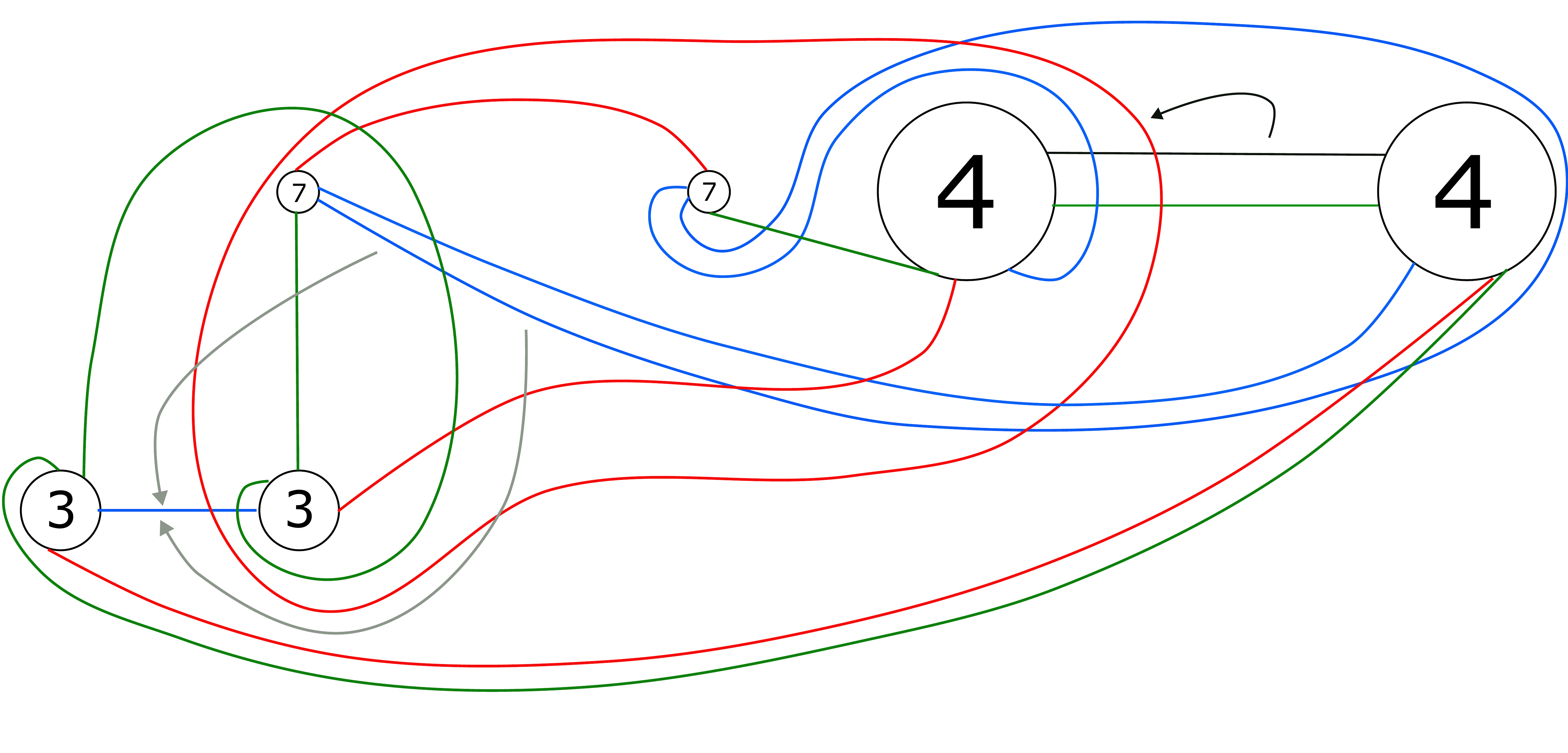}
\end{center}
\setlength{\captionmargin}{50pt}
\caption{After the sixth destabilization.}
\label{fig:after6}
\end{figure}

%%%%%%%%%%%%%%%%%%%%%%%%%%%%%%
\subsection*{The seventh, eighth and ninth destabilizations}
In Figure \ref{fig:after6}, by performing the handle slides indicated in gray, Figure \ref{fig:before7} is obtained. By destabilizing Figure \ref{fig:before7} along the three curves labeled the arrow and circle, we have Figure \ref{fig:after7}.

In Figure \ref{fig:after7}, by performing the handle slide indicated in gray, Figure \ref{fig:before8} is obtained. By destabilizing Figure \ref{fig:before8} along the three curves labeled the arrow and circle, we have Figure \ref{fig:after8}. Note that in this step, the Dehn twist for the $\alpha$ curve is canceled for all $n$.

In Figure \ref{fig:after8}, the ninth destabilization can be clearly performed. This completes the proof.

\begin{figure}[h]
\begin{center}
\includegraphics[width=13cm, height=20cm, keepaspectratio, scale=1]{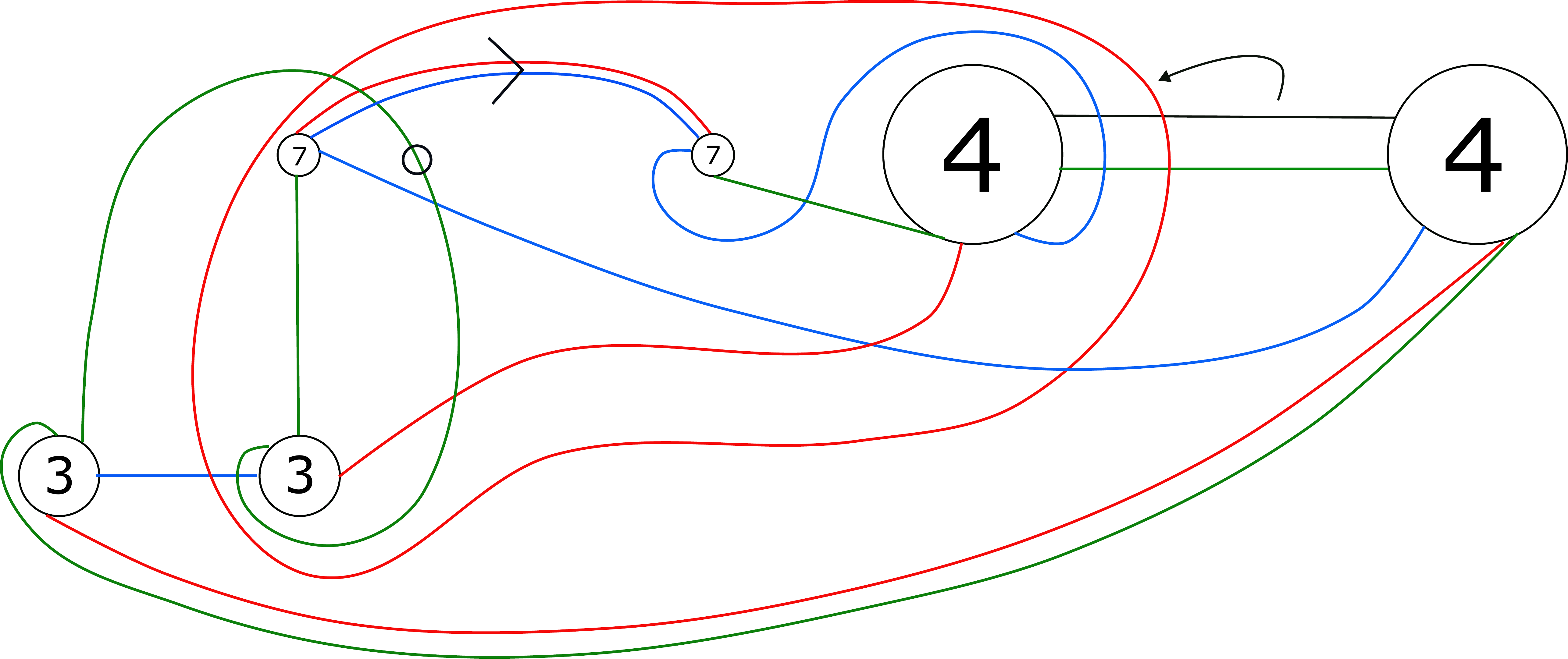}
\end{center}
\setlength{\captionmargin}{50pt}
\caption{The diagram obtained from Figure \ref{fig:after6} by some handle slides. (before the seventh destabilization)}
\label{fig:before7}
\end{figure}

\begin{figure}[h]
\begin{center}
\includegraphics[width=10cm, height=20cm, keepaspectratio, scale=1]{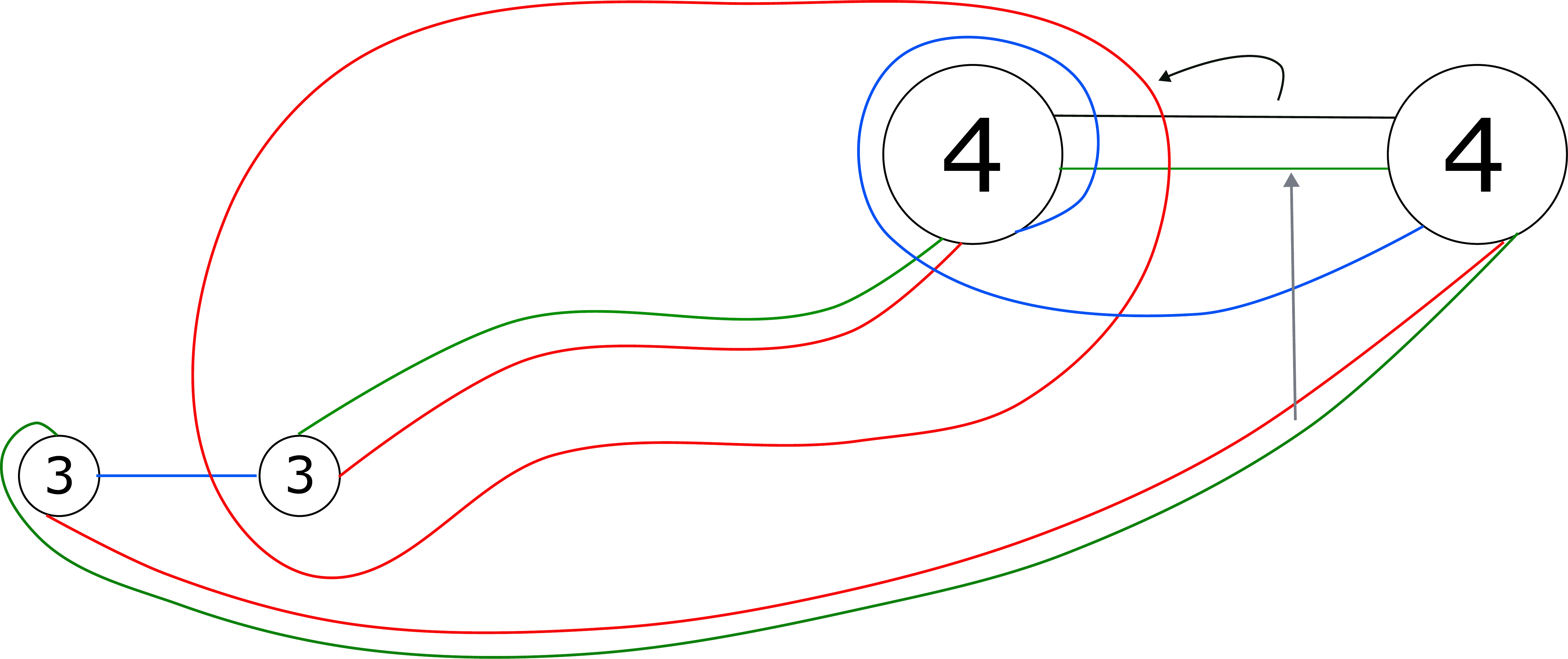}
\end{center}
\setlength{\captionmargin}{50pt}
\caption{After the seventh destabilization.}
\label{fig:after7}
\end{figure}

\begin{figure}[h]
\begin{center}
\includegraphics[width=10cm, height=20cm, keepaspectratio, scale=1]{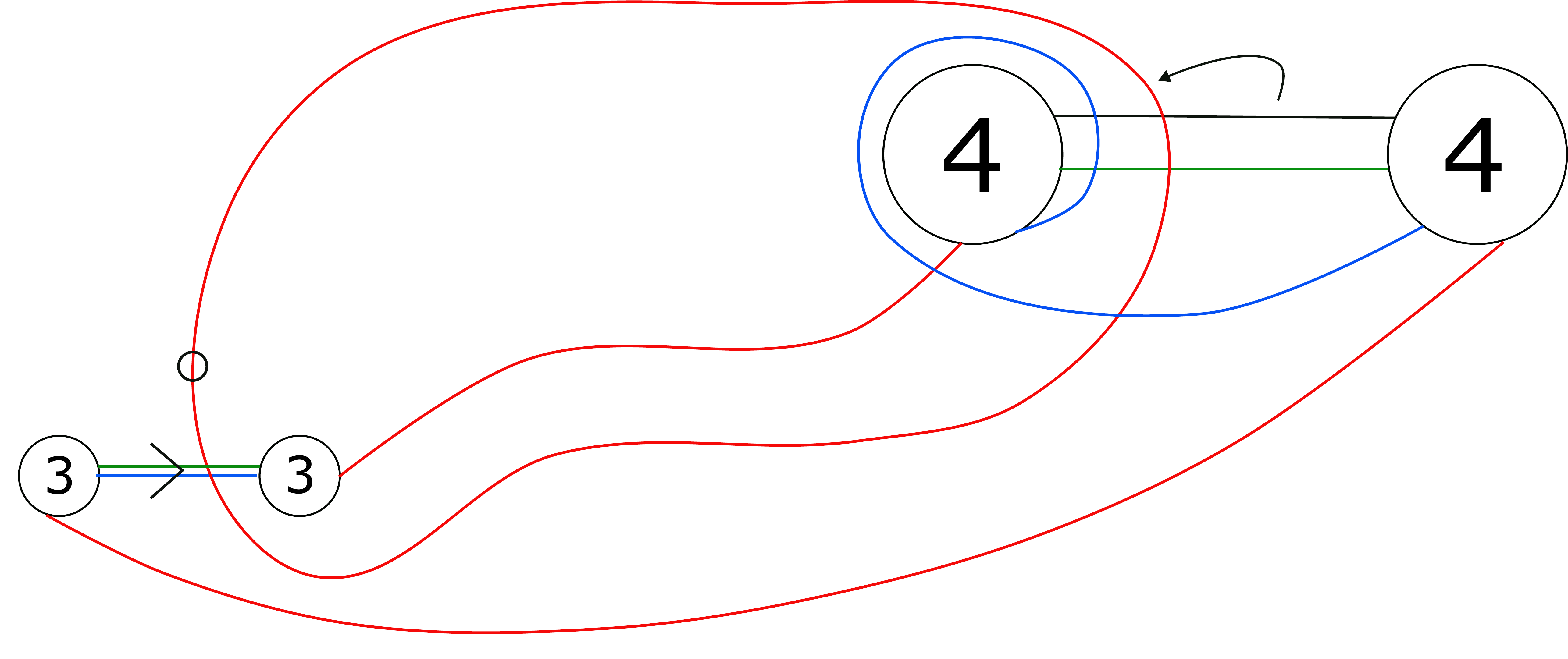}
\end{center}
\setlength{\captionmargin}{50pt}
\caption{The diagram obtained from Figure \ref{fig:after7} by some handle slide. (before the eighth destabilization)}
\label{fig:before8}
\end{figure}

\begin{figure}[h]
\begin{center}
\includegraphics[width=5cm, height=5cm, keepaspectratio, scale=1]{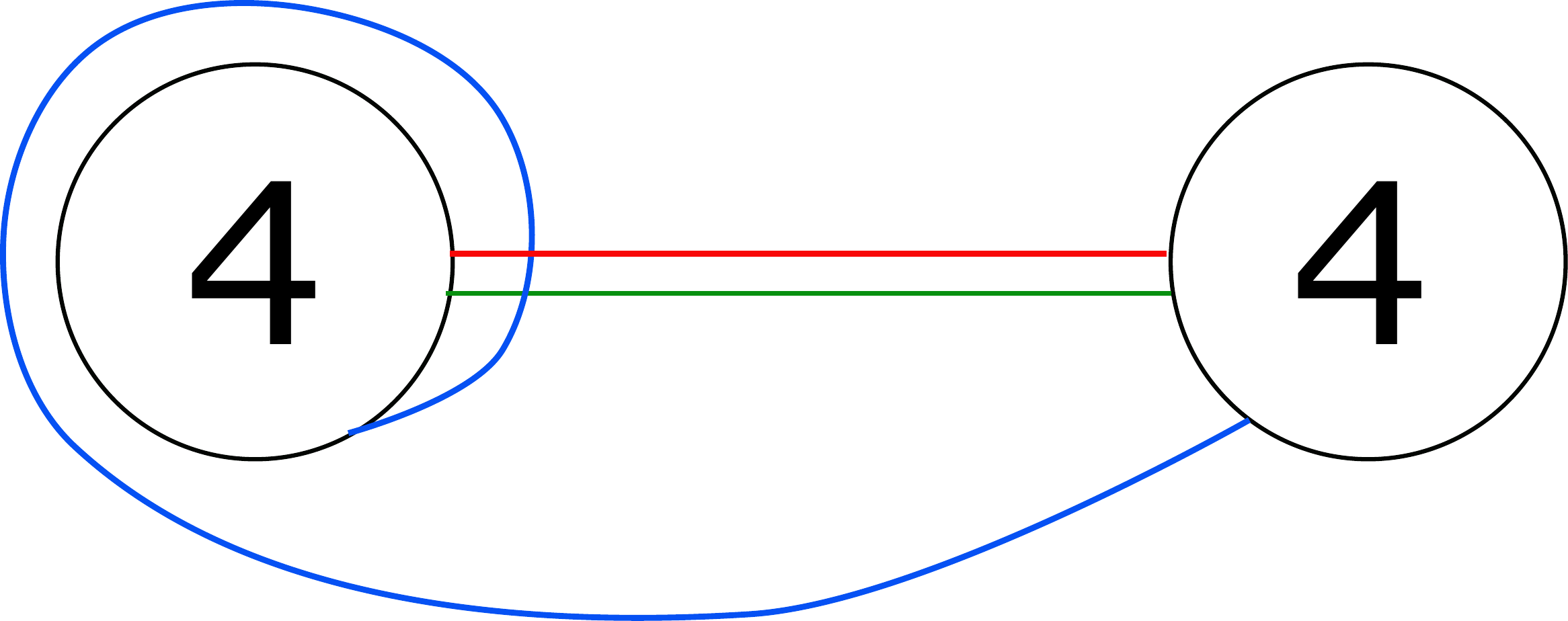}
\end{center}
\setlength{\captionmargin}{50pt}
\caption{After the eighth destabilization.}
\label{fig:after8}
\end{figure}

\end{proof}

%%%%%%%%%%%%%%%%%%%%%%%%%%%%%%%%%%%%%%%%%%%%%%%%%%%%%%%
We construct another trisection diagram of $DW^{-}(0,n+2)$ using the algorithm recalled in subsection \ref{subsec:algorithm}. The trisection diagram is shown in Figure \ref{fig:DDn2}, which is obtained from the Kirby diagram of $DW^{-}(0,n+2)$ depicted in Figure \ref{fig:DW-(l,k)} by using the algorithm. Our second theorem is as follows:

\begin{thm}\label{thm2}
The trisection diagram depicted in Figure \ref{fig:DDn2} is standard for any integer $n$.
\end{thm}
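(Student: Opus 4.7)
The plan is to mirror the strategy of Theorem \ref{thm1}: starting from the trisection diagram in Figure \ref{fig:DDn2}, I would perform a long sequence of surface diffeomorphisms and same-color handle slides followed by a destabilization, iterating until the genus drops to $0$. Each destabilization is justified by \cite[Lemma 8]{MR4480889}, applied to a triple of $\alpha$, $\beta$, $\gamma$ curves arranged in the standard destabilization pattern after the preparatory moves. Since $DW^{-}(0,n+2)$ is diffeomorphic to $S^4$, reaching the genus $0$ diagram is precisely the conclusion that the trisection diagram is standard.

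To organize the reduction, I would exploit the explicit local structure of the diagram produced by Kepplinger's algorithm (subsection \ref{subsec:algorithm}) applied to the Kirby diagram in Figure \ref{fig:DW-(l,k)}. Each 1-handle contributes a pair of disks together with a parallel red-blue curve pair, and each crossing of the attaching link contributes a local ``convert'' pattern as in Figure \ref{fig:convert}. The kinks added at Step (2) of the algorithm, together with crossings at which one of the strands can be freed from the others by handle slides, yield the most immediate destabilization candidates, which I would remove first in order to bring the diagram into a form amenable to further reductions.

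The main obstacle is the presence of the integer parameter $n$, which enters the starting diagram through Dehn twists coming from the $(n+2)$-twisted 2-handle in Figure \ref{fig:DW-(l,k)} after matching surface framings in Step (3) of the algorithm. As in the second and third destabilizations in the proof of Theorem \ref{thm1}, I expect to have to produce a pair of Dehn twists $t_c^{n}$ and $t_{c'}^{-n}$ and then, via a sequence of handle slides, arrange $c$ and $c'$ so that they become parallel and act on the same curve with opposite signs, forcing them to cancel simultaneously for every $n$. Because the diagram from Kepplinger's algorithm is combinatorially different from (and, as noted in Remark \ref{rem:main}, genuinely distinct from) the one obtained by doubling Takahashi's relative trisection, locating the correct curves to isolate the $n$-twist and preparing them to cancel will be the principal computational hurdle of the proof.

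After the $n$-dependence has been removed, what remains is a trisection diagram of $S^4$ of bounded genus independent of $n$. The final phase of the proof, analogous to the seventh through ninth destabilizations in the proof of Theorem \ref{thm1}, consists of a fixed sequence of handle slides and destabilizations that brings the diagram to the genus $0$ trisection diagram of $S^4$, which completes the standardization.
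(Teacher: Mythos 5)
The proposal is a strategy outline rather than a proof: every substantive step is stated in the conditional (``I would perform'', ``I expect to have to produce'') and none of the actual handle slides, Dehn twists, or destabilizations is exhibited. For a statement of this kind the entire mathematical content is the explicit sequence of moves on the genus-$7$ diagram produced by Kepplinger's algorithm, and you supply none of it. In particular, you correctly identify the principal difficulty --- isolating the $n$-fold Dehn twists coming from the $(n+2)$-framed $2$-handle and cancelling them uniformly in $n$ --- but you explicitly defer it as ``the principal computational hurdle'' rather than resolving it. Without producing the specific curves and the slides that make them parallel and act with opposite signs on a common curve, there is no argument that the reduction works for every integer $n$ simultaneously, which is the whole point of the theorem.

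Two further points of comparison with the paper's proof. First, the paper does not destabilize all the way to genus $0$: after five destabilizations it arrives at a $(2;1,0,1)$-trisection diagram of $S^4$ and then invokes the classification of Meier and Zupan \cite[Theorem 1.2]{MR3544545}, which says every such diagram is standard. This shortcut would spare you the final rounds of explicit moves you plan by analogy with the seventh through ninth destabilizations of Theorem \ref{thm1}; your plan to continue to genus $0$ is not wrong, just not what the paper does and strictly more work. Second, the destabilization criterion you cite is the right one --- the paper uses exactly the condition that a $\beta$ and a $\gamma$ curve are parallel and an $\alpha$ curve meets them geometrically once --- but invoking the criterion does not substitute for verifying, diagram by diagram, that this configuration can actually be reached by same-family handle slides. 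As it stands the proposal is a reasonable research plan, not a proof.
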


\begin{proof}
We also show the statement by destabilizing the trisection diagram continuously.

\begin{figure}[h]
\begin{center}
\includegraphics[width=4.5cm, height=4.5cm, keepaspectratio, scale=1]{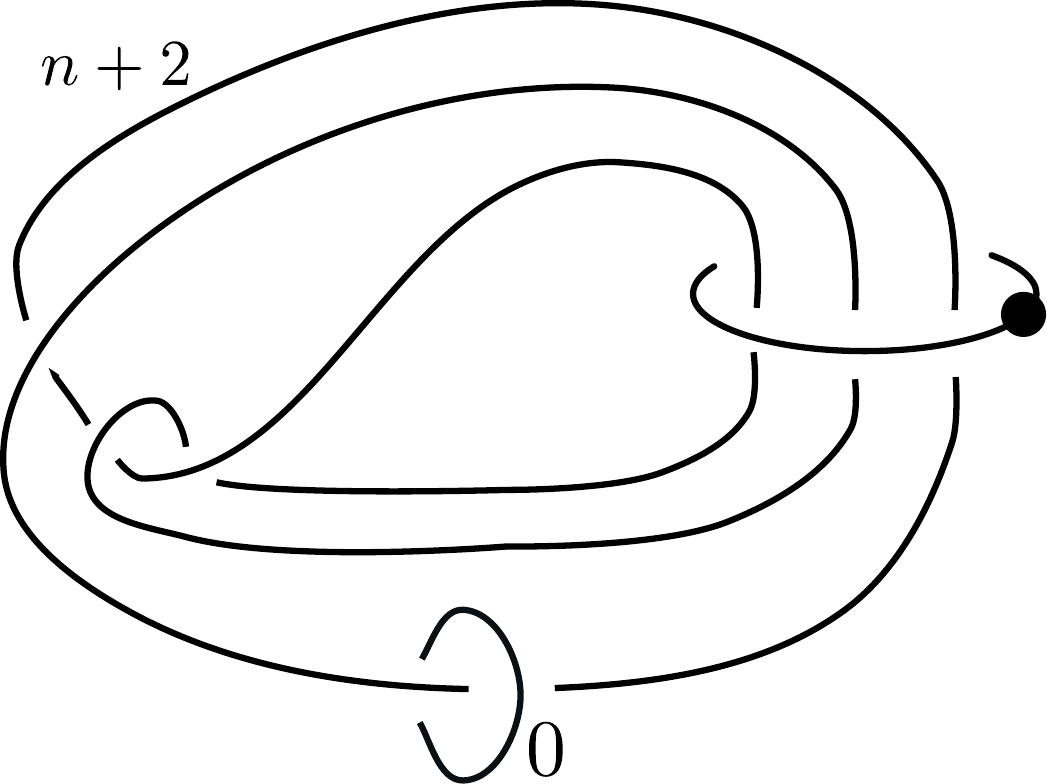}
\end{center}
\setlength{\captionmargin}{50pt}
\caption{A Kirby diagram of $DW^{-}(0,n+2)$ (omitted one 3-handle and one 4-handle). }
\label{fig:DW-(l,k)}
\end{figure}

\begin{figure}[h]
\begin{center}
\includegraphics[width=13cm, height=20cm, keepaspectratio, scale=1]{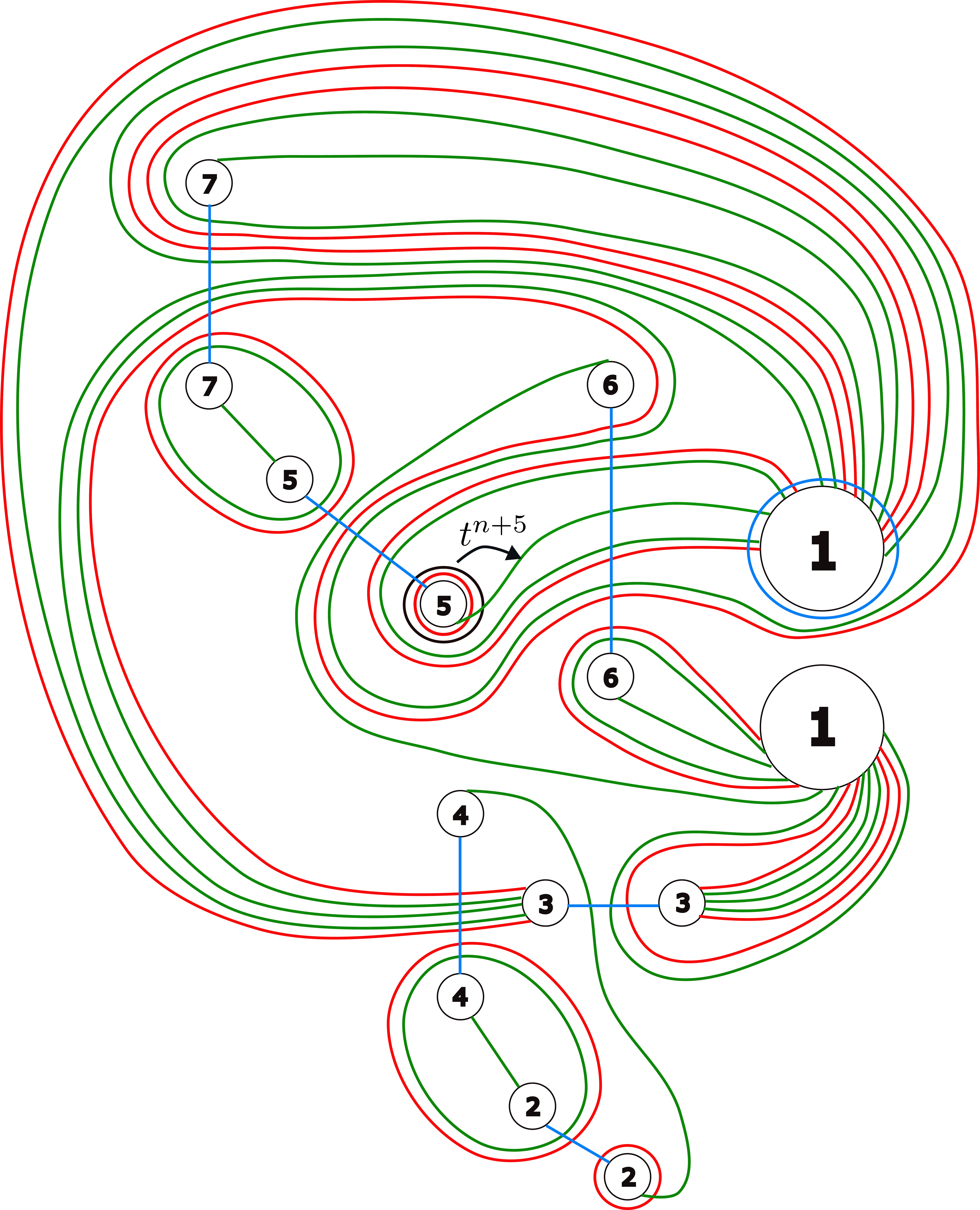}
\end{center}
\setlength{\captionmargin}{50pt}
\caption{A $(7;1,1,5)$-trisection diagram of $DW^{-}(0,n+2)$ obtained from the algorithm.}
\label{fig:DDn2}
\end{figure}

\begin{figure}[h]
\begin{center}
\includegraphics[width=13cm, height=20cm, keepaspectratio, scale=1]{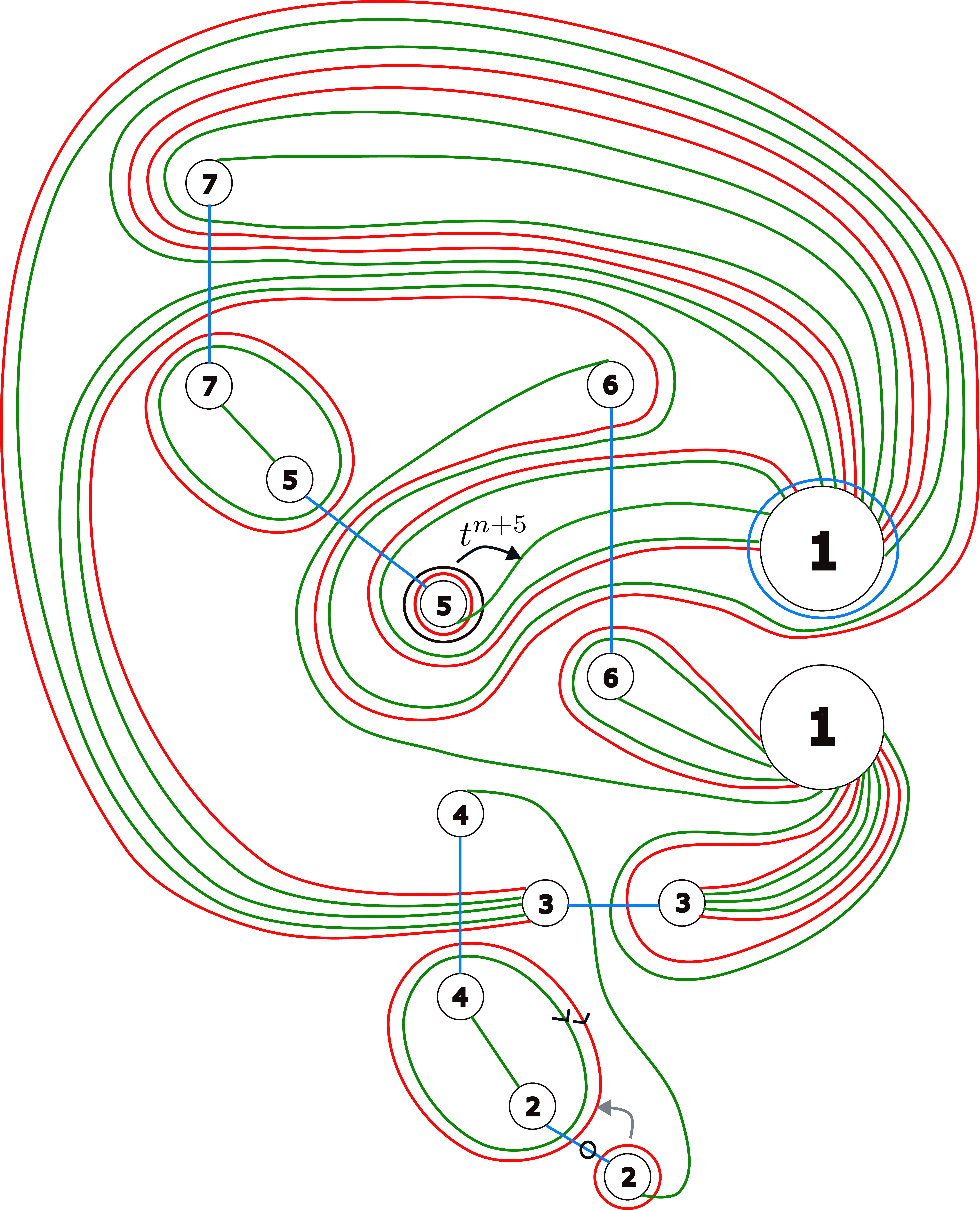}
\end{center}
\setlength{\captionmargin}{50pt}
\caption{The starting diagram in the proof.}
\label{fig:start_destabili2}
\end{figure}

%By performing the handle slides depicted in Figure \ref{fig:start} in gray, Figure \ref{fig:before1} is obtained. Then, by destabilizing Figure \ref{fig:before1}, we have Figure \ref{fig:after1}.

\subsection*{The first destabilization}
In Figure \ref{fig:start_destabili2}, perform the handle slide indicated in gray. Then, by destabilizing the disk labeled 2, we have Figure \ref{fig:after_first2}.

\begin{figure}[h]
\begin{center}
\includegraphics[width=13cm, height=20cm, keepaspectratio, scale=1]{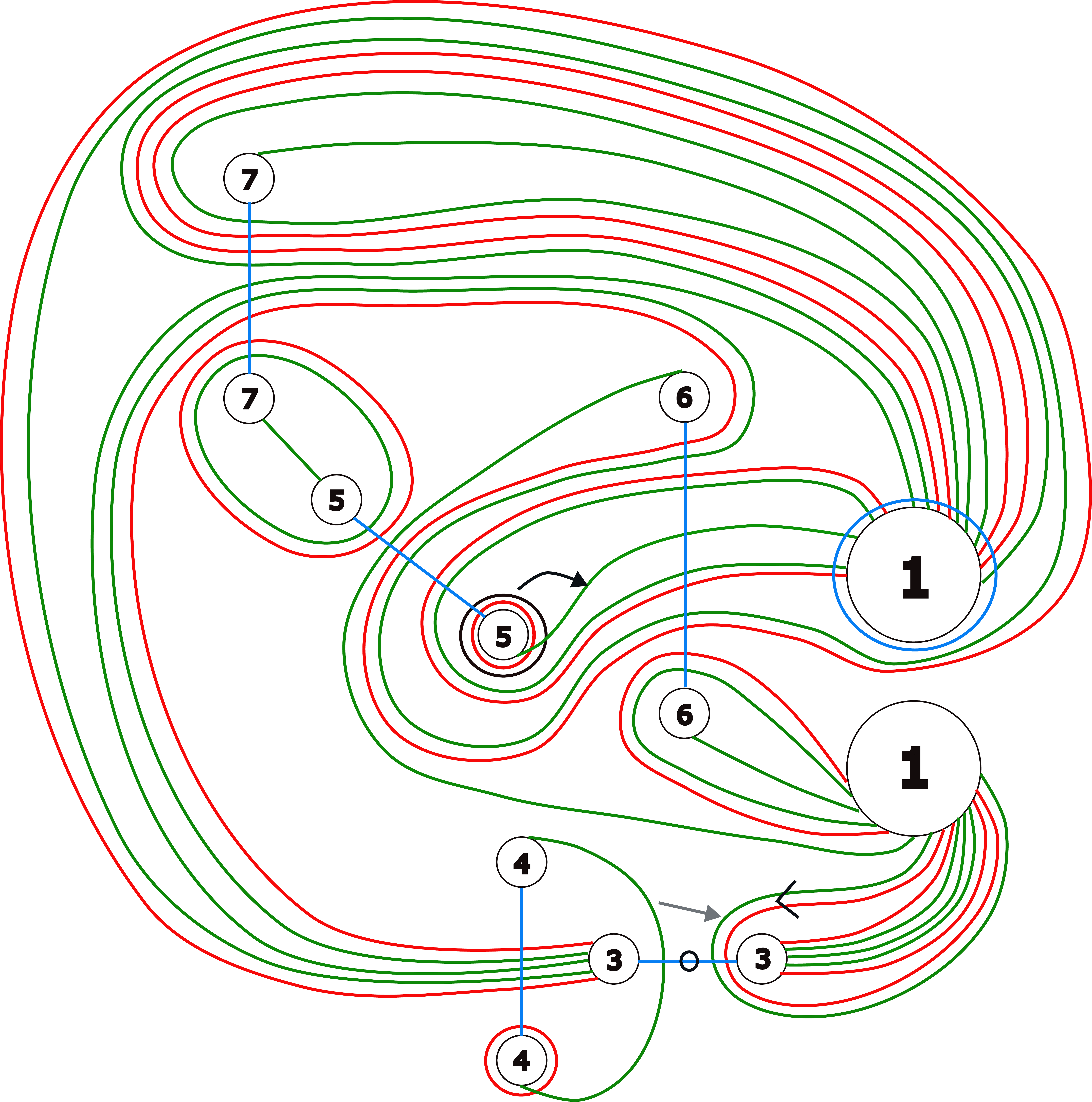}
\end{center}
\setlength{\captionmargin}{50pt}
\caption{After the first destabilization. The type is $(6;1,1,4)$.}
\label{fig:after_first2}
\end{figure}

\subsection*{The second destabilization}
In Figure \ref{fig:after_first2}, perform the handle slide indicated in gray. Then, by destabilizing the disk labeled 3, we have Figure \ref{fig:after_second2}.

\begin{figure}[h]
\begin{center}
\includegraphics[width=13cm, height=20cm, keepaspectratio, scale=1]{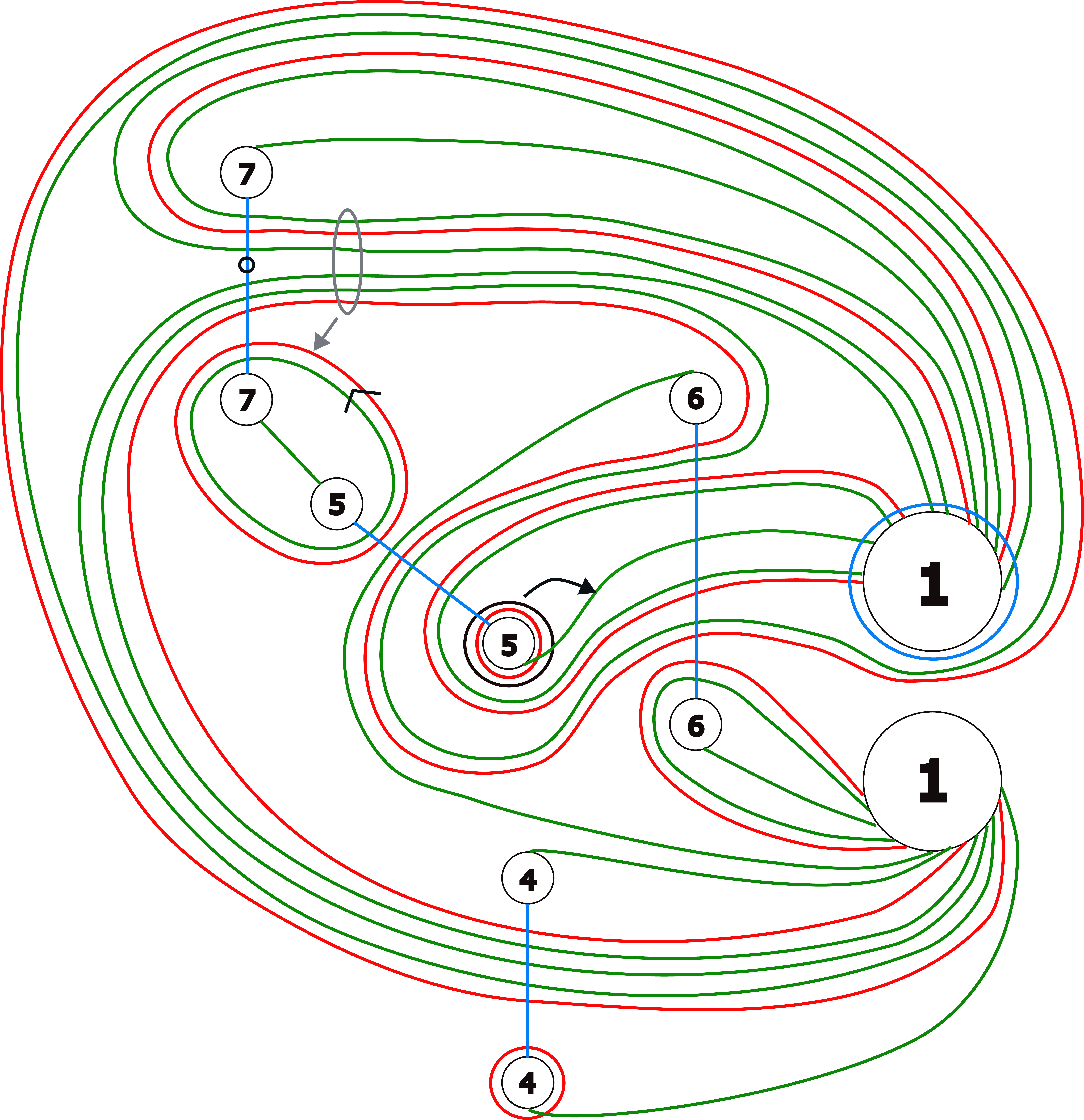}
\end{center}
\setlength{\captionmargin}{50pt}
\caption{After the second destabilization. The type is $(5;1,1,3)$.}
\label{fig:after_second2}
\end{figure}

\subsection*{The third destabilization}
In Figure \ref{fig:after_second2}, perform the handle slides indicated in gray. Then, by destabilizing the disk labeled 7, we have Figure \ref{fig:after_third2}.

\begin{figure}[h]
\begin{center}
\includegraphics[width=13cm, height=20cm, keepaspectratio, scale=1]{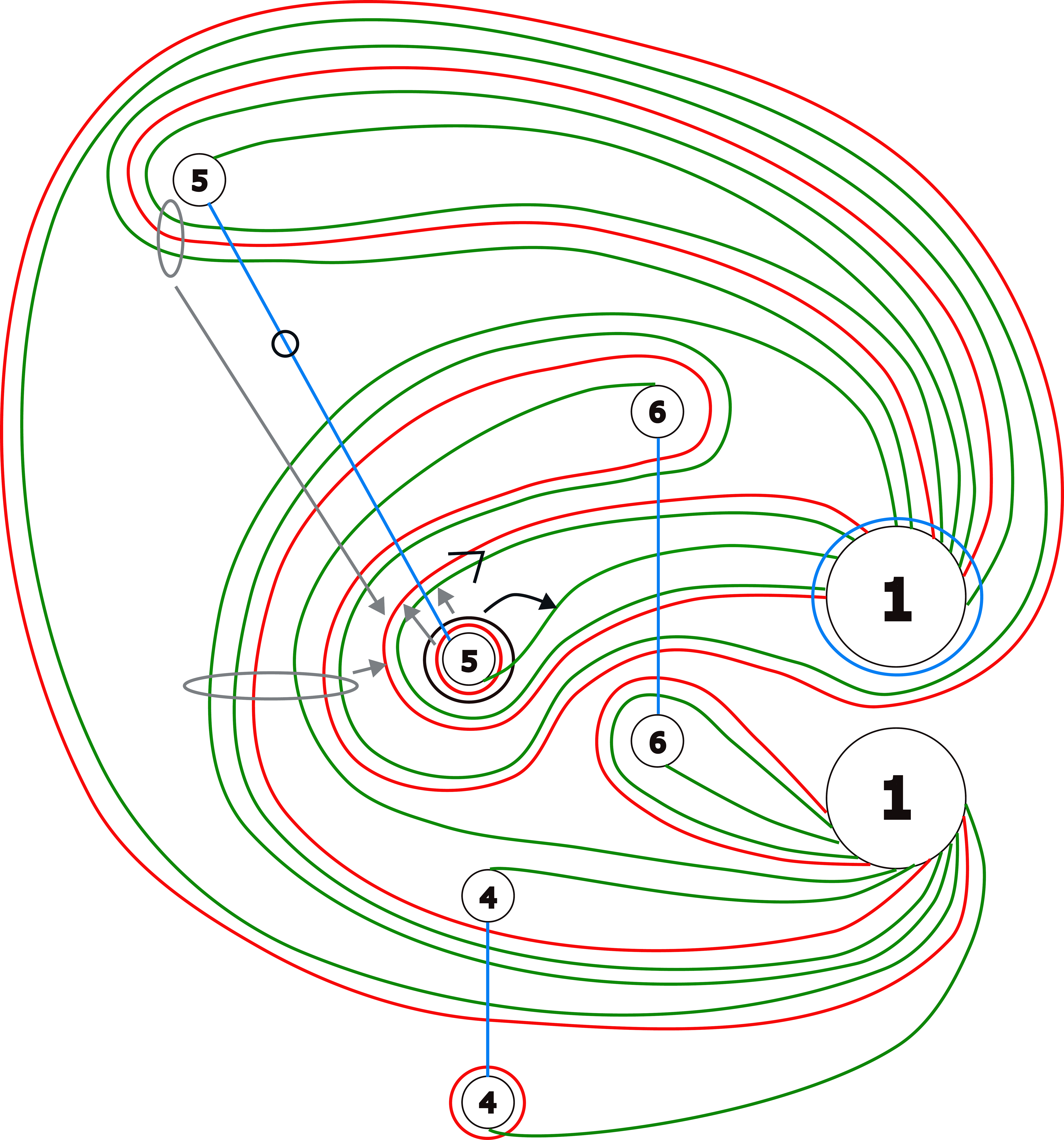}
\end{center}
\setlength{\captionmargin}{50pt}
\caption{After the third destabilization. The type is $(4;1,1,2)$.}
\label{fig:after_third2}
\end{figure}

\subsection*{The fourth destabilization}
In Figure \ref{fig:after_third2}, by performing the handle slides indicated in gray, Figure \ref{fig:before_fourth2} is obtained. Then, by destabilizing the disk labeled 5, we have Figure \ref{fig:after_fourth2}.

\begin{figure}[h]
\begin{center}
\includegraphics[width=13cm, height=20cm, keepaspectratio, scale=1]{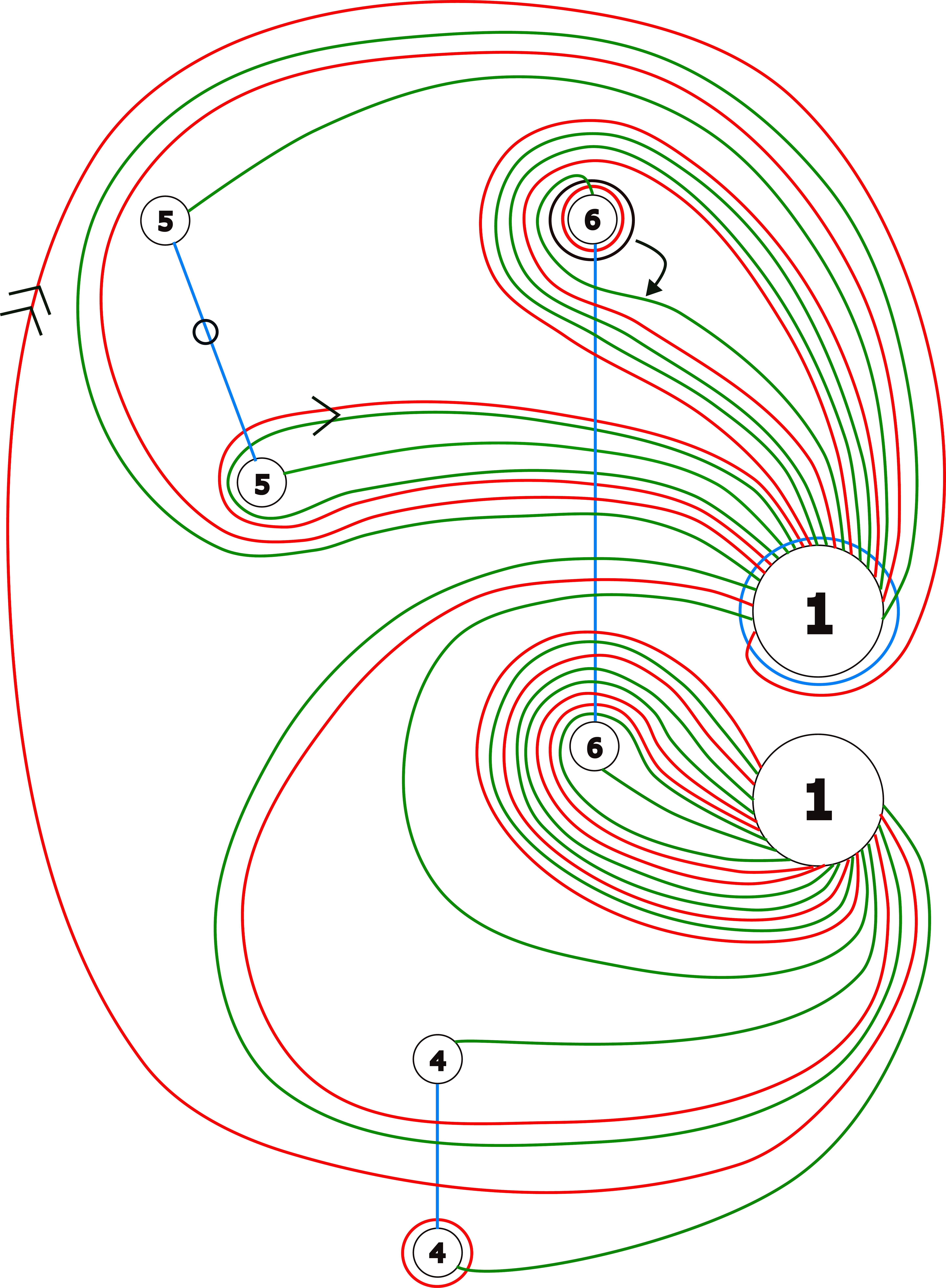}
\end{center}
\setlength{\captionmargin}{50pt}
\caption{The diagram obtained from Figure \ref{fig:after_third2} by some handle slides (before the fourth destabilization). The $\gamma$ curve parallel to the $\alpha$ curve labeled the double arrow is omitted.}
\label{fig:before_fourth2}
\end{figure}

\begin{figure}[h]
\begin{center}
\includegraphics[width=13cm, height=20cm, keepaspectratio, scale=1]{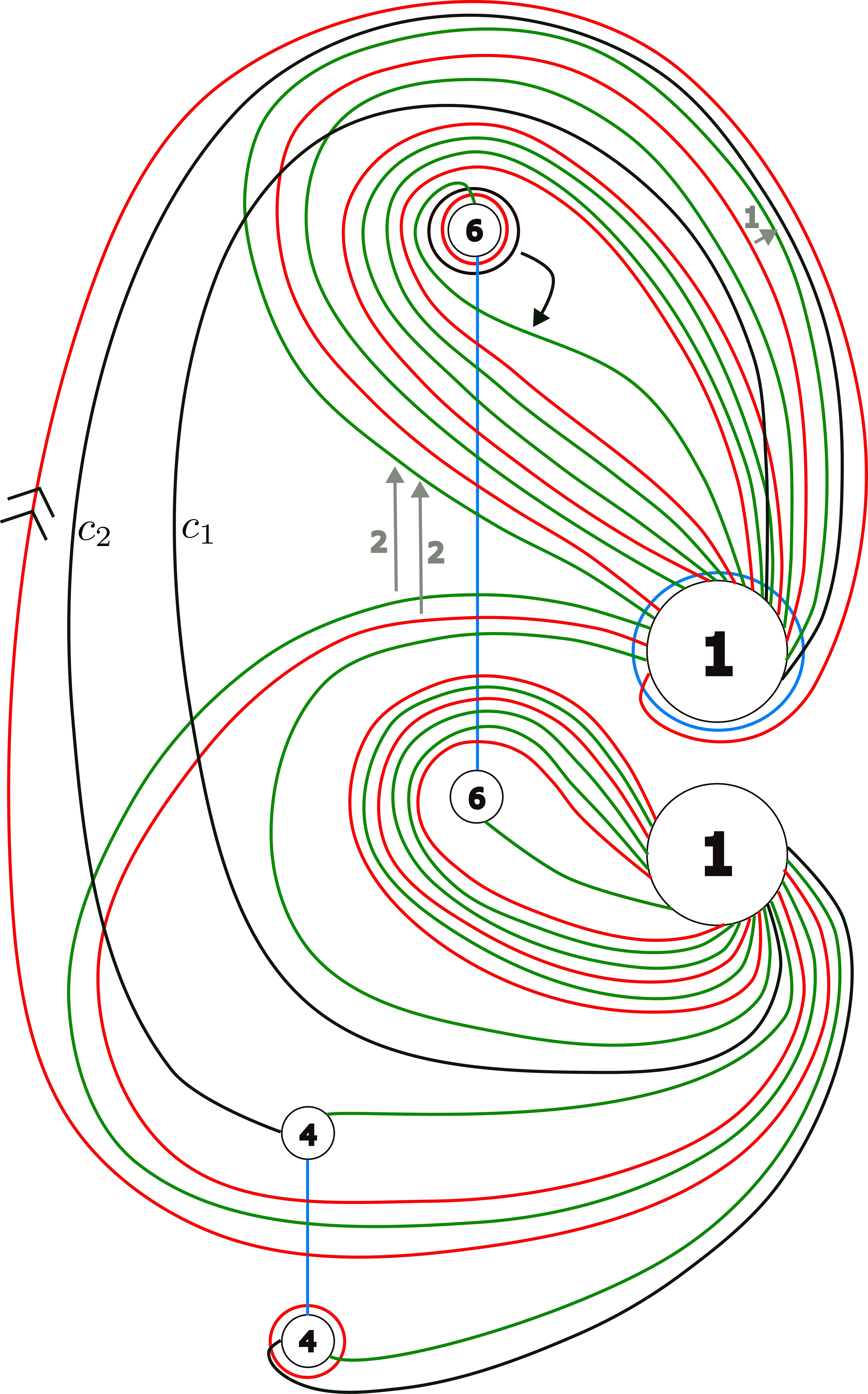}
\end{center}
\setlength{\captionmargin}{50pt}
\caption{After the fourth destabilization. The type is $(3,1)$. The $\gamma$ curve parallel to the $\alpha$ curve labeled the double arrow is omitted.}
\label{fig:after_fourth2}
\end{figure}

\subsection*{The fifth destabilization}%c_1が内側
In Figure \ref{fig:after_fourth2}, firstly handle sliding the $\alpha$ curve labeled the double arrow over other $\alpha$ curves so that the $\alpha$ curve is parallel to the $\beta$ curve on the disk labeled 1. After that, perform the handle slides indicated in gray in order. Then, by performing the Dehn twists $t_{c_1}^{-1} t_{c_2}$ and handle sliding $\alpha$ and $\beta$ curves on the $\alpha$ and $\beta$ curves on the disk labeled 6 appropriately, Figure \ref{fig:before_fifth_2} is obtained. In Figure \ref{fig:before_fifth_2}, by performing the handle slides indicated in gray in order, Figure \ref{fig:before_fifth2_2} is obtained. We can destabilize Figure \ref{fig:before_fifth2_2} since the $\beta$ and $\gamma$ curves labeled the arrow are parallel and the $\alpha$ curve labeled the circle geometrically intersects the parallel two curves only once. If we destabilize the disk labeled 4 in Figure \ref{fig:before_fifth2_2}, we have a $(2;1,0,1)$-trisection diagram of $S^4$. It is known by \cite[Theorem 1.2]{MR3544545} that any $(2;1,0,1)$-trisection diagram of $S^4$ is standard. This completes the proof.

\begin{figure}[h]
\begin{center}
\includegraphics[width=13cm, height=20cm, keepaspectratio, scale=1]{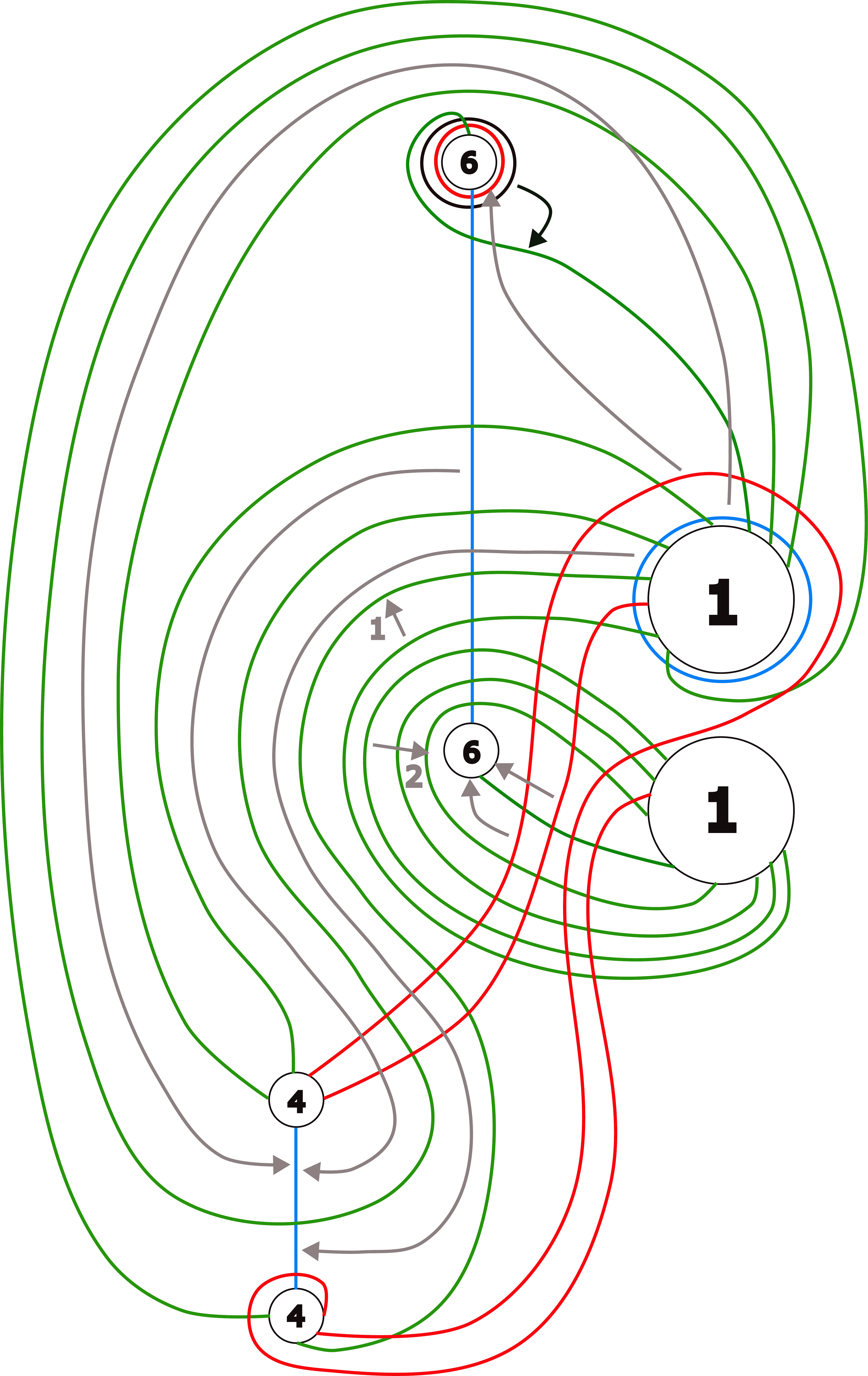}
\end{center}
\setlength{\captionmargin}{50pt}
\caption{The diagram obtained from Figure \ref{fig:after_fourth2} by some Dehn twists and handle slides.}
\label{fig:before_fifth_2}
\end{figure}

\begin{figure}[h]
\begin{center}
\includegraphics[width=13cm, height=20cm, keepaspectratio, scale=1]{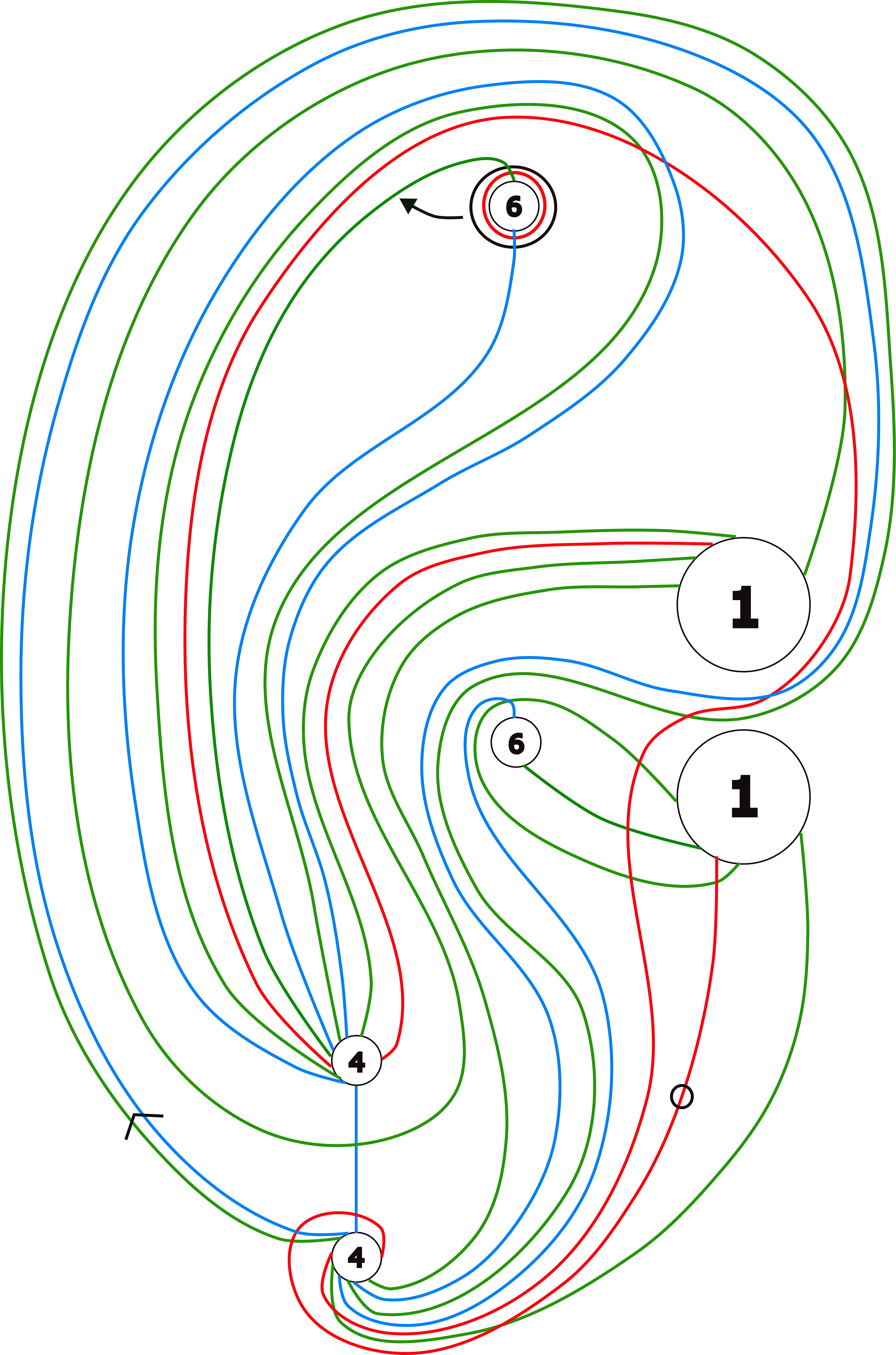}
\end{center}
\setlength{\captionmargin}{50pt}
\caption{The diagram obtained from Figure \ref{fig:before_fifth_2} by some handle slides.}
\label{fig:before_fifth2_2}
\end{figure}

\end{proof}

\begin{rem}\label{rem:main}
The $(7;1,1,5)$-trisection diagram shown in Figure \ref{fig:start_destabili2} cannot be realized as the double of any relative trisection diagram of $W^{-}(0,n+2)$.
\end{rem}

\begin{proof}
It is known \cite{MR3999550} that the type of a trisection obtained by doubling a $(g;k_1,k_2,k_3;p,b)$-relative trisection is $(2g+b-1;2k_1-\ell,2k_2-\ell,2k_3-\ell)$, where $\ell=2p+b-1$. Thus, the following conditions must be satisfied to realize a $(7;1,1,5)$-trisection of $DW^{-}(0,n+2)$: 
\[2g+b-1=7, 2k_1-\ell=1, 2k_2-\ell=1, 2k_3-\ell=5.\]
If $\ell + k \not= 2,3,4,5$, the trisection genus of $W^{-}(\ell,k)$ is 3 \cite{takahashi2023exotic}. Since $b \ge 1$, $2g+b-1=7$ does not hold if $g \ge 4$. Thus, $g=3$, and $b=2$. Then, $k_1=p+1$. Here, by $k_i \ge \ell$ for $i=1,2,3$, we have $k_1 \ge 2p+1$. So, we have $p+1 \ge 2p+1$, that is, $p \le 0$. Since $p \ge 0$, $p=0$. This relative trisection diagram induces an open book decomposition whose page is an annulus to $\partial{W^{-}(0,n+2)}$. However, $\partial{W^{-}(0,n+2)}$ is a homology 3-sphere that is not $S^3$. This completes the proof for $g=3$.

If $\ell + k = 2,3,4,5$, the trisection genus of $W^{-}(\ell,k)$ is potentially 2 \cite{takahashi2023exotic}. In the case where $g=2$, $b=4$. Then, $k_1=p+2$. Here, by $k_i \ge \ell$ for $i=1,2,3$, we have $k_1 \ge 2p+3$. So, we have $p+2 \ge 2p+3$, that is, $p \le -1$. This completes the proof for $g=2$.
\end{proof}

%\section{Further questions}
%Natsuya Takahashi \cite{} also constructed a relative trisection diagram of a Mazur type (in fact, it is a cork). Thus, we can construct a trisection diagram of the double of the Mazur type in the same way appeared in this paper. Is the trisection diagram standard?

\bibliographystyle{amsalpha}
\bibliography{trisection, Mazur}

\end{document}